\numberwithin{equation}{section} 
\theoremstyle{plain}
\newtheorem{exam}{Example}[section]
\newtheorem{theorem}[exam]{Theorem}
\newtheorem{lemma}[exam]{Lemma}
\newtheorem{remark}[exam]{Remark}
\newtheorem{proposition}[exam]{Proposition}
\newtheorem{definition}[exam]{Definition}
\begin{document}
\date{}

\title{ Linearization and H\"{o}lder continuity of generalized ODEs with application to measure differential equations
\footnote{  This paper was jointly supported by the National Natural Science Foundation of China (No. 11671176, 11931016). 
}
}
\author
{
Weijie Lu$^{a}$\,\,\,\,\,
Yonghui Xia$^{a}\footnote{Corresponding author. yhxia@zjnu.cn. }$
\\
{\small \textit{$^a$ College of Mathematics Science,  Zhejiang Normal University, 321004, Jinhua, China}}\\
{\small Email: luwj@zjnu.edu.cn; yhxia@zjnu.cn}
}

\maketitle

\begin{abstract}
In this paper, we study the topological conjugacy between the linear generalized ODEs (for short, GODEs)
\[
  \frac{dx}{d\tau}=D[A(t)x]
\]
and their nonlinear perturbation
\[
  \frac{dx}{d\tau}=D[A(t)x+F(x,t)]
\]
on Banach space $\mathscr{X}$, where $A:\mathbb{R}\to\mathscr{B}(\mathscr{X})$ is a bounded linear operator on $\mathscr{X}$ and $F:\mathscr{X}\times \mathbb{R}\to \mathscr{X}$ is Kurzweil integrable.
GODEs are completely different from the classical ODEs.
Note that the GODEs in Banach space are defined via its solution. $\frac{dx}{d\tau}$ is only a notation and  
it does not indicate that the solution has a derivative.  The solution of the GODEs can be discontinuous and even the number of discontinuous points is countable,
so that many classical theorems and tools are no longer applicable to the GODEs.
  For instances, the chain rule and the multiplication rule of derivatives, differential mean value theorem, and integral mean value theorem are not valid for the GODEs.
In this paper, we study the linearization and its H\"{o}lder continuity of the GODEs.
   Firstly, we construct the formula for bounded solutions of the nonlinear GODEs in the Kurzweil integral sense.
   Afterwards, we establish a Hartman-Grobman type linearization theorem which is a bridge connecting the linear GODEs with their nonlinear perturbations. Further, we show that the conjugacies are both H\"{o}lder continuous by using the Gronwall-type inequality (in the Perron-Stieltjes integral sense) and other nontrivial
estimate techniques. 
  Finally, applications to the measure differential equations and impulsive differential equations, our results are very effective.
  \\
  {\bf Keywords}: generalized ODEs; Perron integrals; linearization; measure differential equations; topological equivalence   \\
{\bf MSC2020}: 26A39;  34A36; 45N05; 37C15
\end{abstract}


\section{Introduction}
  \subsection{History of the generalized ODEs}

  The generalized ordinary differential equations (for short, GODEs) in Banach space have received widespread attentions recently, which has the form of:
\begin{equation}\label{1F}
  \frac{dx}{d\tau}=DK(x,t),
\end{equation}
where $K:\mathscr{X}\times \mathbb{R}\to \mathscr{X}$ is a given map on Banach space $\mathscr{X}$.
    In addition, as mentioned by Schwabik (\cite{S-BOOK1992}, Remark 3.2),
the letter $D$ of \eqref{1F} means that \eqref{1F} is a generalized differential equation, this concept being defined via its solution,
and  $\frac{dx}{d\tau}$ is only a notation. Even, the symbol $\frac{dx}{d\tau}$ does not indicate that the solution has a derivative.
To illustrate this result,  recall an example from Schwabik's book (see \cite{S-BOOK1992}, pp. 100):
  let $g:[0,1]\to \mathbb{R}$ be a continuous function whose derivative does not exist at any point in $[0,1]$.
  We  define $K(x,t)=g(t)$. In this case, from the definition of the integral, it follows that
\[
\int_c^d DK(x(\tau),\gamma)=\int_c^d Dg(\gamma)=g(d)-g(c), \quad \mathrm{for} \; c,d \in [0,1],
\]
which implies that $x:[0,1]\to\mathbb{R}$ given by $x(\gamma)=g(\gamma)$ for $\gamma\in [0,1]$ is a solution of
$\frac{dx}{d\tau}=DK(x,t)=Dg(t)$, but it has no derivative at any point in the interval $[0,1]$.
Additionally, he pointed out that the GODE is a formal equation-like object for which one has defined its solutions.

   The concept of GODEs was initiated in Kurzweil  \cite{K-CMJ1957,K-CMJ1958}, who introduced it
 to generalize the classical results on the continuous dependence of the solutions of  ODEs with respect to parameters in 1957.
   Later, the fundamental theory in the framework of GODEs was established, one can consult Schwabik \cite{S-BOOK1992,S-MB1996,S-MB1999}.
  More recently, the stability theory and the qualitative property of GODEs are developed by many scholars.
  For example,  the variation of constants formula for the GODEs was proposed by Collegari et al. \cite{CFF-CMJ},
  the boundedness of solutions for GODEs were derived by Afonso et al. \cite{ABFG-MN2012} and Federson et al. \cite{F-JDE2017},
 the concepts of exponential dichotomy and its robustness results were given by Bonotto et al. \cite{BFS-JDE2018,BFS-JDDE2020},
 the results on topological properties of flows for nonnegative time in the framework of GODEs were also presented by Bonotto et al. \cite{B-JDE2021},
 the existence of periodic solutions for autonomous GODEs can be seen Federson et al. \cite{FGMT-N2022},
  the converse Lyapunov theorem for GODEs were described by Andrade da Silva et al. \cite{AMT-JDE2022} and
the boundary value problems for GODEs was obtained from Bonotto et al. \cite{BFM-JGA2023}.
      Other concepts on the GODEs were considered in the monograph of Bonotto et al. \cite{BFM-BOOK2021}.
 The GODEs include various types of other differential equations as special cases, such as the classical ODEs, measure differential equations (MDEs),
  impulsive differential equations (IDEs), functional differential equations (FDEs) as well as
  dynamic equations on time scales.
In particular, a pretty application of GODEs is MDEs, which have well studied (see e.g., Federson and Mesquita \cite{F-MB2002}, Federson, et al. \cite{FMS-JDE2012}, Piccoli \cite{P-ARMA}, Piccoli and Rossi \cite{P-DCDS}, Meng \cite{MG-PAMS}, Meng and Zhang \cite{MG-JDE}, Zhang \cite{ZhangMR-SCM}, Wen \cite{Wen-JDE}, Wen and Zhang \cite{Wen-DCDSB}, Chu and Meng \cite{chu-MA}, Chu, et al. \cite{Chu-JDE}, Liu, Shi, Yan \cite{YanJ-SCM}), another application of GODEs is IDEs (see Federson and Schwabik \cite{FS-DIE2006}, Afonso, et al. \cite{ABFS-JDE2011}).

\subsection{History of  linearization}
   Linearization, as an important topic in dynamical systems, portrays  the nonlinear perturbation systems through the dynamical behavior of
the linear systems.
   A fundamental contributions to this project in the autonomous systems is the Hartman-Grobman theorem  (see \cite{Hartman,Grobman}),
which describes that $C^1$ hyperbolic diffeomorphism $G:\mathbb{R}^d\to \mathbb{R}^d$ can be $C^0$ linearized near the fixed point.
   Later, Palis  \cite{Palis} and Pugh \cite{Pugh1} generalized the local and global Hartman-Grobman theorem to Banach space, respectively.
   In addition, infinite dimensional versions of this results were well presented by Lu \cite{Lu1} (scalar reaction-diffusion equation), Bates and Lu \cite{Lu2} (Cahn-Hilliard equation and phase field equations),
Hein and Pr\"{u}ss \cite{Hein-Pruss1} (semilinear hyperbolic evolution equations), Farkas \cite{Farkas} (retarded functional equations).
Except for the $C^0$ linearization of the differential equations,
     Sternberg (\cite{Sternberg1,Sternberg2}) initially investigated $C^r$ linearization for $C^k$ diffeomorphisms.
Sell (\cite{Sell1}) extended the theorem of Sternberg. 
     Casta\~{n}eda and Robledo \cite{CR-JDE} formulated some sufficient conditions for smooth linearization of nonautonomous differential equations whose linear part admits
a uniform exponential contraction.
   Cuong et al. \cite{CDS-JDDE} proved a Sternberg-type theorem for nonautonomous systems
while the linear part has a uniform exponential dichotomy.
  Dragi\v{c}evi\'{c} et al. \cite{DZZ-MZ,DZZ-PLMS} showed discrete  and continuous smooth linearization results with nonuniform exponential dichotomy, respectively.
Some mathematicians paid particular attention on the $C^1$ linearization.
 Belitskii \cite{Belitskii1,Belitskii2},
ElBialy \cite{ElBialy1}, Rodrigues and Sol\`a-Morales \cite{RS-JDE} studied that
$C^1$ linearization of hyperbolic diffeomorphisms on Banach space independently.
 Recently, Zhang et al. \cite{ZWN-JFA,ZWN-MA,ZWN-JDE,ZWM-TAMS}  showed the $C^{1,\beta}$-linearization for $C^{1,\alpha}$ or $C^{1,1}$ hyperbolic diffeomorphisms (where $0<\beta<\alpha\leq 1$), and they proved that the regularity for the transformations is sharp.

 Usually, the results of $C^1$ linearization are local. However, to describe the global dynamics,
many scholars paid attention to the global linearization of the non-autonomous systems, which was originated from Palmer  \cite{Palmer1}.
   Palmer established the global topological conjugacy between the nonautonomous nonlinear system and its linear part by constructing conjugate maps,
which also relies on the exponential dichotomy theory (see Coppel  \cite{Coppel-book}) for nonautonomous linear systems.
   After that, by weakening the assumption of exponential dichotomy, Jiang  \cite{Jiang2} obtained a version of Hartman-Grobman theorem for ordinary dichotomy.
Backes and Dragi\v{c}evi\'{c} \cite{BD-CM}  proved a linearization result for generalized exponential dichotomy, which extended the result of
Bernardes and Messaoudi \cite{BM-PAMS} from autonomous to nonautonomous systems.
   As well as Barreira and Valls \cite{B-V2} presented linearization results with the nonuniform exponential dichotomy.
   Reinfelds and \v{S}teinberga  \cite{Reinfelds-IJPAM} firstly studied the $C^0$ linearization for non-hyperbolic systems,
and Backes et al.  \cite{BDK-JDE} extended it to the non-hyperbolic coupled systems.
   By reducing the condition of boundedness for the nonlinear perturbations,
Xia et al.  \cite{Xia-BSM} reported a Hartman-Grobman theorem under locally integrable conditions.
  Casta\~{n}eda and Robledo \cite{CR-DCDSA} and Huerta \cite{Huerta} considered the linearization of the unbounded nonlinear system
under the nonuniform exponential contraction,  respectively.
   Backes and Dragi\v{c}evi\'{c} \cite{BD-arXiv1} gave a version of multiscale linearization.
   Qadir \cite{SIGMA} presented a geometric linearization of second order semilinear ordinary differential equations.
   Furthermore, $C^0$-linearization theory was investigated in various dynamical systems, for instance,
the functional differential equations (see Farkas \cite{Farkas}),
the classical impulsive differential equations (see Reinfelds and Sermone  \cite{Reinfelds2}, Reinfelds \cite{Rein1997,Rein2000},  Sermone \cite{Sermone3,Sermone4},
Fenner and Pinto  \cite{Fenner-Pinto} and Xia et al. \cite{Xia1}),
dynamic equations on time scales (see P\"otzche \cite{Potzche1} and Xia et al. \cite{Xia2}),
the differential equations with piecewise constant argument (see Papaschinopoulos  \cite{Papa-A}, Pinto and Robledo \cite{Pinto-JAA}).

\subsection{Motivation}

  The theory of linear GODEs has been well established, see the monographs of Schwabik \cite{S-BOOK1992} and Bonotto et al. \cite{BFM-BOOK2021}.
   However the theory of nonlinear GODEs is not yet mature. It is very important to study the relationships between the linear systems and the nonlinear systems. Hartman-Grobman theorem builds bridges between the linear system and its nonlinear perturbations.
    Up till now, there is no papers considering the topological conjugacy between the linear GODEs and their nonlinear perturbations. In this paper, we firstly establish the Hartman-Grobman-type theorem in the framework of GODEs based on the exponential dichotomy proposed by Bonotto et al.  \cite{BFS-JDE2018}.

   Consider the linear GODEs
\begin{equation}\label{1}
  \frac{dx}{d\tau}=D[A(t)x]
\end{equation}
and their nonlinear perturbation
\begin{equation}\label{2}
  \frac{dx}{d\tau}=D[A(t)x+F(x,t)]
\end{equation}
on Banach space $\mathscr{X}$, where $A:\mathbb{R}\to\mathscr{B}(\mathscr{X})$ is a bounded linear operator on $\mathscr{X}$ and $F:\mathscr{X}\times \mathbb{R}\to \mathscr{X}$ is Kurzweil integrable.
Note that the notation $\frac{dx}{d\tau}$ in \eqref{1} (or \eqref{2}) does not indicate that the solution has a derivative because it is only a symbol.  The solution of the GODEs can be discontinuous and even the number of discontinuous points is countable,
so that many classical theorems and tools are no longer applicable to the GODEs.
  For instances, the chain rule and the multiplication rule of derivatives, differential mean value theorem, and integral mean value theorem  are not valid for the GODEs. Due to the great differences between GODEs and ODEs, it is difficult to extend the Hartman-Grobman theorem of ODEs to the GODEs.

  In this paper, we prove the topological conjugacy between GODEs \eqref{2} and its linear part \eqref{1}. More precisely,
  constructing the approximately identical maps and using the relationship between the integral equations,
we verify that these maps are homeomorphisms and achieve the bijection between the solutions of Eqs. \eqref{1} and \eqref{2}.
  Furthermore,  we  study the regularity of these maps (or conjugacies).
  We give a result for H\"{o}lder conjugacies with the help of the Gronwall-type inequality (in the Perron-Stieltjes integral sense) and other nontrivial
estimate techniques.
   As applications, we derive results on the Hartman-Grobman theorem for the measure differential equations and impulsive differential equations.

To construct the approximately identical maps, it is necessary to obtain the explicit bounded solutions of \eqref{2} provided that the linear GODEs \eqref{1} admit an exponential dichotomy. We remark that it is difficult to obtain the explicit bounded solutions of Eq. \eqref{2}. For the classical ODEs, standardly, we use the fixed point theory to obtain the bounded solutions of the nonlinear perturbation systems if its homogeneous linear system admits an exponential dichotomy. However, the standard method used in the classical ODEs is not valid for the GODEs. Provided that the linear GODEs \eqref{1} admit an exponential dichotomy, we construct the formulas for bounded solution of Eq. \eqref{2}
 by using the technique of Picard's stepwise approximation. We obtain the following explicit expression for the bounded solution
of a class of nonlinear GODEs:
\begin{equation}\label{3}
  \begin{split}
x(t)=& \int_0^t DF(x(\tau),s)-\int_{-\infty}^t d_{\sigma}[\mathscr{V}(t)P\mathscr{V}^{-1}(\sigma)]\int_0^\sigma DF(x(\tau),s) \\
&+ \int_{t}^\infty d_{\sigma}[\mathscr{V}(t)(I-P)\mathscr{V}^{-1}(\sigma)] \int_0^\sigma DF(x(\tau),s),
\end{split}
\end{equation}
where $\mathscr{V}(t):=V(t,0)$ is the fundamental operator of Eq. \eqref{1} and $P$ is a projection, $I$ is an identity operator.
  Note that the term $ \int_0^t DF(x(\tau),s)$ is a form specific to the nonlinear GODEs, since the Kurzweil integral is involved.
  If the Kurzweil integral  reduces to the Riemann integral or the Lebesgue integral,
the expression \eqref{3} is consistent with the case of the classical ODEs, see Coppel \cite{Coppel-book}.

\subsection{Outline}
  The rest of the paper is arranged as follows.
  In Section 2, we give the preliminary results for GODEs.
  Subsections 2.1 and 2.2 introduce the regulated functions, bounded variation functions and Kurzweil integral.  Subsection 2.3 presents the fundamental theory for GODEs.
  And a concept of strong exponential dichotomy is given.
  In Section 3, we analyze the bounded solution for Eq. \eqref{2}.
  Section 4 presents results which characterize the topological conjugacy between Eq. \eqref{1} and Eq. \eqref{2}.
  In Section 5, we derive the H\"{o}lder continuity for the conjugacies.
  We present some useful inequalities and then give rigorous proofs in subsections 5.2 and 5.3, respectively.
  Finally, we apply the results of GODEs to MDEs and IDEs.

\section{Preliminaries for the GODEs}
\subsection{regulated and bounded variation functions}
   We say that a function $g:[{a_1},{a_2}]\to \mathscr{X}$ is said to be  $regulated$ if
\[g(t^+):=\lim\limits_{\tau\to t^+} g(\tau), \quad t\in[{a_1},{a_2}) \quad  \mathrm{and} \quad
g(t^{-}):=\lim\limits_{\tau \to t^-} g(\tau), \quad t\in({a_1},{a_2}],\]
where $(\mathscr{X}, \|\cdot\|)$ is a Banach space.
Define
\[
G([{a_1},{a_2}],\mathscr{X})=\{g:[{a_1},{a_2}]\to \mathscr{X}| g \;\mathrm{ is} \; \mathrm{a} \; \mathrm{regulated}  \; \mathrm{function} \;
\mathrm{with}  \; \sup_{t\in[{a_1},{a_2}]}\|g(t)\|<\infty\}
\]
and $\|g\|_\infty:=\sup_{t\in[{a_1},{a_2}]}\|g(t)\|$.
Then $(G([{a_1},{a_2}],\mathscr{X}),\|\cdot\|_\infty)$  is a Banach space,  (see \cite{Honig-book}, Theorem 3.6).

 We say that a finite point set  $D=\{t_0,t_1,\cdots, t_j\}\subset [{a_1},{a_2}]$ such that ${a_1}=t_0\leq t_1\leq \cdots\leq t_j={a_2}$ is a $division$ of $[{a_1},{a_2}]$.
 If $|D|$ is the number of subintervals $[t_{j-1},t_j]$ of a division $D$ of $[{a_1},{a_2}]$, then we write $D=\{t_0,\cdots,t_{|D|}\}$.
   Denote that $\mathscr{D}[{a_1},{a_2}]$  is the set of all division of $[{a_1},{a_2}]$.
   A map $g:[{a_1},{a_2}]\to \mathscr{X}$ is said to be a $variation$  if
\[
  \mathrm{var}_{{a_1}}^{{a_2}} g:=\sup_{D\in\mathscr{D}[{a_1},{a_2}]}\sum_{j=1}^{|D|}\|g(t_j)-g(t_{j-1})\|.
\]
  If $\mathrm{var}_{{a_1}}^{{a_2}}g<\infty$, then $g$ is a  $bounded$ $variation$ function on  $[{a_1},{a_2}]$.
  Define
\[
BV([{a_1},{a_2}],\mathscr{X})=\{g\in \mathscr{X}| g \; \mathrm{is} \; \mathrm{a} \; \mathrm{bounded} \; \mathrm{variation} \; \mathrm{function}\; \mathrm{with} \; \|g({a_1})\|+\mathrm{var}_{{a_1}}^{a_2} g<\infty\}
\]
and $\|g\|_{BV}:=\|g({a_1})\|+\mathrm{var}_{{a_1}}^{a_2} g$.
  Then $(BV([{a_1},{a_2}],\mathscr{X}),\|\cdot\|_{BV})$ is a Banach space. Further,
 $BV([{a_1},{a_2}],\mathscr{X}) \subset G([{a_1},{a_2}],\mathscr{X})$, see also \cite{Honig-book}.


\subsection{Kurzweil integral}
 We  recall the concept of the  Kurzweil integral, which is defined by \cite{K-CMJ1957} and \cite{S-BOOK1992}.
  To begin it, we give some basic concepts.

 A $tagged$ $division$ of  $[{a_1},{a_2}]\subset \mathbb{R}$ is a finite collection of point-interval pairs $D=\{(\tau_j,[s_{j-1},s_j]): j=1,2,\cdots,|D| \}$,
 where ${a_1}=s_0\leq s_1 \leq \cdots \leq s_{|D|}={a_2}$ is a division of $[{a_1},{a_2}]$ and the tag $\tau_j \in [s_{j-1},s_j]$.
A $gauge$ on $[{a_1},{a_2}]$ is an arbitrary  positive function $\varepsilon:[{a_1},{a_2}]\to (0,\infty)$.
A tagged division $D=\{(\tau_j,[s_{j-1},s_j]),j=1,2,\cdots,|D|\}$ of $[{a_1},{a_2}]$ is $\varepsilon$-fine for any gauge $\varepsilon$ on $[{a_1},{a_2}]$ if
\[
[s_{j-1},s_j]\subset (\tau_j-\varepsilon(\tau_j),\tau+\varepsilon(\tau_j))).
\]


\begin{definition}\label{KW} (Kurzweil integrable)
Assume that there exists a unique   $J\in \mathscr{X}$ such that the following conditions holds:
for any $\epsilon>0$,
there is a gauge $\varepsilon$ of $[{a_1},{a_2}]$ satisfying for each $\varepsilon$-fine tagged division $D=\{(\tau_j,[s_{j-1},s_j]), j=1,2,\cdots,|D| \}$ of $[{a_1},{a_2}]$,
we have
\[\|K(V,d)-J\|<\epsilon,\]
 where $K(V,d)=\sum_{j=1}^{|D|} [V(\tau_j,s_j)-V(\tau_j,s_{j-1})]$, and we write $J=\int_{a_1}^{a_2} DV(\tau,t)$ in this situation.
 Such function $V:[{a_1},{a_2}]\times [{a_1},{a_2}]\to \mathscr{X}$ is called Kurzweil integrable on $[{a_1},{a_2}]$.
\end{definition}


\subsection{The fundamental theory for the GODEs}
We review the fundamental theory of GODEs.
  Let $\mathscr{B}(\mathscr{X})$ be the set of all bounded linear operators with the operator norm $\|\cdot\|$.
  Given  $\mathbb{I}\subseteq \mathbb{R}$ and  linear GODEs
\begin{equation}\label{LLL}
  \frac{dx}{d\tau}=D[A(t)x],
\end{equation}
where $A: \mathbb{I}\to \mathscr{B}(\mathscr{X})$.
As point out in  \cite{S-BOOK1992}, a function $x:[{a_1},{a_2}]\to\mathscr{X}$
is said to be  a solution of \eqref{LLL} iff
\begin{equation}\label{Z1}
   x({a_2})=x({a_1})+\int_{{a_1}}^{{a_2}} D[A(s)x(\tau)].
\end{equation}
  The integral on the right-hand side of \eqref{Z1} is a Kurzweil integral,
  which is denoted by the Perron-Stieltjes integral $\int_{a_1}^{a_2} d[A(s)]x(s)$ (see \cite{S-BOOK1992,S-MB1996}), since we represent $\int_{a_1}^{a_2} D[A(t)x(\tau)]$ as Riemann-Stieltjes sum taking the form of
$\sum [A(t_j)-A(t_{j-1}]x(\tau_j)$.

Let $I$ be an identity operator, $A(t^+)=\lim\limits_{s\to t+}A(s)$ and $A(t^-)=\lim\limits_{s\to t-}A(s)$, we suppose the following conditions throughout this paper.
\\
(H1) for any $[{a_1},{a_2}]\subset \mathbb{I}$,    $A\in BV([{a_1},{a_2}],\mathscr{B}(\mathscr{X}))$;\\
(H2)   for $t\in \mathbb{I} \backslash \sup(\mathbb{I})$,  $(I+[A(t^+)-A(t)])^{-1}\in \mathscr{B}(\mathscr{X})$ and for  $t\in \mathbb{I} \backslash \inf(\mathbb{I})$,
$(I-[A(t)-A(t^-)])^{-1}\in \mathscr{B}(\mathscr{X})$.\\
\indent The above assumptions guarantee the existence and uniqueness of the solution of \eqref{LLL}.


\begin{lemma}(\cite{S-MB1999}, Theorem 2.10)
Assume that (H1) and (H2) hold.
Given $(t_0,x_0)\in \mathbb{I}\times \mathscr{X}$.
Then the following linear GODE
\begin{equation}\label{Z2}
  \begin{cases}
  \frac{dx}{d\tau}=D[A(t)x(\tau)],\\
  x(t_0)=x_0,
  \end{cases}
\end{equation}
admits a unique solution on $\mathbb{I}$.
\end{lemma}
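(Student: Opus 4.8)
The plan is to reduce the statement to the general existence--uniqueness theory for (nonlinear) GODEs, applied to the right-hand side $K(x,t):=A(t)x$, and then to globalize by a continuation argument. First I would verify that, under (H1), the map $K$ belongs to the appropriate class of admissible right-hand sides (in the sense of Schwabik) on every compact subinterval $[a_1,a_2]\subset\mathbb{I}$: for $x,y\in\mathscr{X}$ and $s_1\le s_2$ in $[a_1,a_2]$,
\[
\|K(x,s_2)-K(x,s_1)\|\le\|x\|\,\mathrm{var}_{s_1}^{s_2}A,\qquad
\|K(x,s_2)-K(x,s_1)-K(y,s_2)+K(y,s_1)\|\le\|x-y\|\,\mathrm{var}_{s_1}^{s_2}A,
\]
so that a nondecreasing function built from $t\mapsto\mathrm{var}_{a_1}^{t}A$ controls $K$ on the region where the prospective solution lives. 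This yields a local solution of \eqref{Z2} through $(t_0,x_0)$ by the standard local existence theorem for GODEs.

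The second step is uniqueness, which I would obtain from a Gronwall-type inequality in the Perron--Stieltjes sense. If $x_1,x_2$ both solve \eqref{Z2} on a common interval $J\ni t_0$, then $z:=x_1-x_2$ satisfies $z(t)=\int_{t_0}^{t}d[A(s)]\,z(s)$ on $J$, hence
\[
\|z(t)\|\le\Bigl|\int_{t_0}^{t}\|z(s)\|\,d\bigl(\mathrm{var}_{t_0}^{s}A\bigr)\Bigr|,
\]
and the Perron--Stieltjes Gronwall lemma forces $z\equiv0$ on $J$. Here is exactly where (H2) is used: at a discontinuity $t$ of $A$ the solution must satisfy $x(t^+)=(I+[A(t^+)-A(t)])\,x(t)$ going forward and $x(t)=(I-[A(t)-A(t^-)])\,x(t^-)$ going backward, so the invertibility in (H2) is what makes the jump relations uniquely solvable in both time directions and keeps the Gronwall constant finite across the (countably many) jumps.

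Finally, for existence on all of $\mathbb{I}$ I would argue by continuation: extend the local solution to its maximal interval of existence, and note that linearity together with the Gronwall estimate (in its Perron--Stieltjes form, of the type $\|x(t)\|\le\|x_0\|\exp(\mathrm{var}_{t_0}^{t}A)$ suitably corrected at the jumps by the operators $(I\pm\,\cdot\,)^{-1}$ from (H2)) keeps $x$ bounded on every compact subinterval; since $A\in BV$ there, the solution cannot cease to exist at an interior endpoint of $\mathbb{I}$, so the maximal interval is all of $\mathbb{I}$. The main obstacle is precisely the handling of the discontinuities: one must show the solution continues uniquely in \emph{both} directions through each jump point (guaranteed by (H2)) and that the a priori bounds survive the jumps, which requires combining the continuous-part estimate with the product-type contribution of the jump operators in a controlled way. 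Alternatively, one may simply invoke \cite{S-MB1999}, Theorem~2.10, where this is proved in full.
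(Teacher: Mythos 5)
This lemma is not proved in the paper at all: it is imported verbatim from Schwabik (\cite{S-MB1999}, Theorem 2.10), so there is no in‑paper argument to compare yours against. Your sketch is a sound outline of the standard proof and correctly identifies the three ingredients: $K(x,t)=A(t)x$ lies in the class $\mathscr{F}(B_c\times[a_1,a_2],h)$ with $h(t)=c\cdot\mathrm{var}_{a_1}^{t}A$ once one restricts to a ball $B_c$ (the first of your two inequalities carries the factor $\|x\|$, so the restriction is genuinely needed before the local existence theorem applies); uniqueness via the Perron--Stieltjes Gronwall inequality; and globalization via an a priori bound of the form $\|x(t)\|\le C\|x_0\|e^{C\,\mathrm{var}_{t_0}^{t}A}$ (exactly the estimate from Theorem 6.15 of \cite{S-BOOK1992} that the paper quotes later in Section 5). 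One formula in your jump analysis is inverted, though: for a solution of $x(t)=x_0+\int_{t_0}^{t}d[A(s)]x(s)$ the limit properties of the Kurzweil--Stieltjes integral give
\[
x(t^+)=\bigl(I+[A(t^+)-A(t)]\bigr)x(t),\qquad x(t^-)=\bigl(I-[A(t)-A(t^-)]\bigr)x(t),
\]
and not $x(t)=(I-[A(t)-A(t^-)])x(t^-)$ as you wrote. Consequently $(I-[A(t)-A(t^-)])^{-1}$ is what one needs to continue the solution \emph{forward} across $t$ (recovering $x(t)$ from the already-determined $x(t^-)$), while $(I+[A(t^+)-A(t)])^{-1}$ is needed to continue \emph{backward} (recovering $x(t)$ from $x(t^+)$); your attribution of the two halves of (H2) to the two time directions is therefore swapped. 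This does not damage the argument — your conclusion that (H2) is precisely what makes the solution pass uniquely through each jump in both directions is correct — but the direction of each inversion matters if one actually writes out the backward Gronwall step. Finally, for the record, Schwabik's own proof of the linear case does not route through the nonlinear local existence theorem: he treats the linear Stieltjes integral equation directly, building the solution by iteration on subintervals where the variation of $A$ is small and composing across jumps with the inverse operators from (H2). Both routes are legitimate; yours is the more "generic" one.
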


\begin{lemma}(\cite{CFF-CMJ}, Theorem 4.3)
An operator $V:\mathbb{I}\times \mathbb{I}\to \mathscr{B}(\mathscr{X})$ is said to be a fundamental operator of Eq. \eqref{LLL} if
\begin{equation}\label{EP}
  V(t,s)=I+\int_s^t d[A(r)]V(r,s), 
\end{equation}
and for any fixed $s\in \mathbb{I}$,  $V(\cdot,s)$ is of locally bounded variation in $\mathbb{I}$.
  Furthermore, the unique solution of \eqref{Z2}  is 
$x(t)=V(t,s)x_s$.
\end{lemma}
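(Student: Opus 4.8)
The plan is to reduce the assertion to the general existence--uniqueness theory for generalized ODEs, applied to an operator-valued equation in $\mathscr{B}(\mathscr{X})$, and then to deduce the solution formula by linearity of the Perron--Stieltjes integral.

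\emph{Step 1 (existence and local bounded variation of $V$).} First I would regard \eqref{LLL} as a generalized ODE in the Banach space $\mathscr{B}(\mathscr{X})$ by setting $K(X,t):=A(t)X$ for $(X,t)\in\mathscr{B}(\mathscr{X})\times\mathbb{I}$. By (H1), on every compact interval $[a_1,a_2]\subset\mathbb{I}$ the nondecreasing function $h(t):=\mathrm{var}_{a_1}^{t}A$ satisfies
\[
\|K(X,t_2)-K(X,t_1)\|\le\|X\|\,|h(t_2)-h(t_1)|,\qquad \|K(X,t_2)-K(X,t_1)-K(Y,t_2)+K(Y,t_1)\|\le\|X-Y\|\,|h(t_2)-h(t_1)|,
\]
so $K$ lies in the class of right-hand sides to which Schwabik's existence--uniqueness theorem for GODEs applies (cf.\ \cite{S-BOOK1992}), condition (H2) being precisely what guarantees solvability of the associated jump relations at the at most countably many discontinuities of $A$. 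Taking the solution $X(\cdot)$ with $X(s)=I$ and writing $V(t,s):=X(t)$ produces an operator satisfying \eqref{EP}; the local bounded variation of $V(\cdot,s)$ is then immediate from \eqref{EP}, the estimate $\big\|\int_{s}^{t}d[A(r)]V(r,s)\big\|\le\|V(\cdot,s)\|_\infty\,\mathrm{var}_{s}^{t}A$, and (H1).

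\emph{Step 2 (the solution formula).} Fix $(t_0,x_0)\in\mathbb{I}\times\mathscr{X}$ and put $x(t):=V(t,t_0)x_0$. Since $x_0$ is a fixed vector, linearity of the Perron--Stieltjes integral with respect to the integrand (justified by passing to Riemann--Stieltjes sums $\sum[A(t_j)-A(t_{j-1})]V(\tau_j,t_0)x_0$) gives
\[
\int_{t_0}^{t}d[A(r)]\,x(r)=\int_{t_0}^{t}d[A(r)]\big(V(r,t_0)x_0\big)=\Big(\int_{t_0}^{t}d[A(r)]V(r,t_0)\Big)x_0=\big(V(t,t_0)-I\big)x_0=x(t)-x_0,
\]
where the third equality is \eqref{EP}. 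Hence $x(t)=x_0+\int_{t_0}^{t}d[A(r)]x(r)$, i.e.\ $x$ satisfies the defining relation \eqref{Z1} on every subinterval, while $x(t_0)=V(t_0,t_0)x_0=x_0$ follows from \eqref{EP} with $t=t_0$; thus $x$ solves \eqref{Z2}. Uniqueness is exactly the previous lemma (\cite{S-MB1999}, Theorem 2.10), so $x(t)=V(t,t_0)x_0$ is the solution.

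\emph{Main obstacle.} The delicate point is Step 1: showing that $V$ extends to all of $\mathbb{I}$ rather than only locally, since the natural contraction constant $\mathrm{var}_{s}^{t}A$ need not be small near a jump of $A$. This is handled by solving first on the continuity pieces via a weighted-norm (Picard-type) fixed point argument and then propagating across each discontinuity through the jump relations $V(t^{+},s)=(I+[A(t^{+})-A(t)])V(t,s)$ and $V(t,s)=(I-[A(t)-A(t^{-})])V(t^{-},s)$, whose solvability in $\mathscr{B}(\mathscr{X})$ is ensured precisely by (H2). The same invertibility yields $V(t,s)V(s,r)=V(t,r)$ and $V(t,s)^{-1}=V(s,t)$, which is what makes the notation $\mathscr{V}^{-1}$ in \eqref{3} meaningful. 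The remaining points — linearity of the operator-valued Perron--Stieltjes integral in the constant factor and the variation estimates — are routine consequences of the definitions in Subsections 2.1--2.2.
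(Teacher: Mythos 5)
The paper does not prove this lemma---it is imported verbatim from \cite{CFF-CMJ} (Theorem 4.3), whose proof rests on Schwabik's theory of linear operator-valued Stieltjes integral equations, and your argument reconstructs exactly that route: solve the operator equation \eqref{EP} in $\mathscr{B}(\mathscr{X})$ under (H1)--(H2), then pull the constant vector $x_0$ through the Perron--Stieltjes integral to obtain $x(t)=V(t,t_0)x_0$, with uniqueness supplied by the preceding lemma. This is correct as a sketch; the only point to tighten is that $K(X,t)=A(t)X$ satisfies the $\mathscr{F}(\Omega,h)$ bounds only on bounded sets of $X$ (your first displayed inequality carries the factor $\|X\|$), so one should invoke the linear existence theory of \cite{S-MB1999} together with an a priori Gronwall bound rather than the general nonlinear existence theorem verbatim.
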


\begin{lemma}(\cite{CFF-CMJ}, Theorem 4.4)
  The operator $V:\mathbb{I}\times \mathbb{I} \to \mathscr{B}(\mathscr{X})$  has the following properties:\\
(1) $V(t,t)=I$;\\
(2) for any $[{a_1},{a_2}]\subset \mathbb{I}$, there is a positive constant $N>0$ satisfying
\[\begin{split}
&\|V(t,s)\|\leq N, \quad t,s\in[{a_1},{a_2}], \quad \mathrm{var}_{a_1}^{a_2} V(t,\cdot)\leq N,\quad t\in[{a_1},{a_2}],\\
&\mathrm{var}_{a_1}^{a_2} V(\cdot,s)\leq N,\quad s\in[{a_1},{a_2}];
\end{split}\]
(3) for any $t,r,s\in \mathbb{I}$, $V(t,s)=V(t,r)V(r,s)$;\\
(4) $V^{-1}(t,s)\in\mathscr{B}(\mathscr{X})$ and $V^{-1}(t,s)=V(s,t)$.
\end{lemma}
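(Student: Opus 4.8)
The plan is to extract all four assertions from the defining equation \eqref{EP}, namely $V(t,s)=I+\int_s^t d[A(r)]V(r,s)$, combined with the existence--uniqueness result for \eqref{Z2}, the linearity of the Perron--Stieltjes integral, and the standard estimate $\bigl\|\int_s^t d[A(r)]g(r)\bigr\|\le\int_s^t\|g(r)\|\,d\bigl(\mathrm{var}_{a_1}^{\cdot}A\bigr)(r)$, valid when $A\in BV([a_1,a_2],\mathscr{B}(\mathscr{X}))$. Assertion (1) is immediate: putting $t=s$ in \eqref{EP} annihilates the integral, so $V(s,s)=I$. Next, to obtain the uniform bound and the variation bound in the first argument, fix $s$ and set $h(r):=\mathrm{var}_{a_1}^{r}A$, which is nondecreasing. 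From \eqref{EP} and the estimate above, $\|V(t,s)\|\le1+\bigl|\int_s^t\|V(r,s)\|\,dh(r)\bigr|$, and the Gronwall-type inequality in the Perron--Stieltjes sense (see \cite{S-BOOK1992}) yields $\|V(t,s)\|\le e^{\,\mathrm{var}_{a_1}^{a_2}A}=:N_0$ for all $t,s\in[a_1,a_2]$. Applying \eqref{EP} on each subinterval of a division of $[a_1,a_2]$ then gives $\|V(t_j,s)-V(t_{j-1},s)\|\le N_0\,\mathrm{var}_{t_{j-1}}^{t_j}A$, and summation yields $\mathrm{var}_{a_1}^{a_2}V(\cdot,s)\le N_0\,\mathrm{var}_{a_1}^{a_2}A$.

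For (3), fix $s,r\in\mathbb{I}$ and put $Y(t):=V(t,r)V(r,s)$. Multiplying \eqref{EP} (with second argument $r$) on the right by the fixed operator $V(r,s)$ and using that the Perron--Stieltjes integral commutes with right multiplication by a fixed bounded operator, one gets $Y(t)=V(r,s)+\int_r^t d[A(u)]Y(u)$; hence, for every $\xi\in\mathscr{X}$, the map $t\mapsto Y(t)\xi$ solves \eqref{Z2} with value $V(r,s)\xi$ at $t=r$. By \eqref{EP} the map $t\mapsto V(t,s)\xi$ also solves \eqref{Z2} and takes the same value at $t=r$, so the uniqueness Lemma forces $Y(t)\xi=V(t,s)\xi$ for all $t\in\mathbb{I}$; as $\xi$ is arbitrary, $V(t,r)V(r,s)=V(t,s)$. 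No ordering hypothesis on $s,r,t$ is needed, because solutions of \eqref{Z2} are defined on all of $\mathbb{I}$. Property (4) is then a corollary: specializing (3) gives $V(t,s)V(s,t)=V(t,t)=I$ and $V(s,t)V(t,s)=V(s,s)=I$, so $V(t,s)$ is invertible with $V^{-1}(t,s)=V(s,t)\in\mathscr{B}(\mathscr{X})$.

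Finally, the second-argument variation bound in (2) follows by writing, for a division $a_1=r_0\le\cdots\le r_m=a_2$, the identity $V(t,r_j)-V(t,r_{j-1})=V(t,r_j)\bigl(V(r_{j-1},r_{j-1})-V(r_j,r_{j-1})\bigr)$ via (3)--(4)--(1), estimating the last factor by \eqref{EP} as above by $N_0\,\mathrm{var}_{r_{j-1}}^{r_j}A$, and summing to get $\mathrm{var}_{a_1}^{a_2}V(t,\cdot)\le N_0^2\,\mathrm{var}_{a_1}^{a_2}A$; one then takes $N:=\max\{N_0,\,N_0\,\mathrm{var}_{a_1}^{a_2}A,\,N_0^2\,\mathrm{var}_{a_1}^{a_2}A\}$.

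The main obstacle is property (3): one must reduce the operator-valued cocycle identity to the scalar existence--uniqueness theorem for \eqref{Z2} and carefully justify that right multiplication by a fixed bounded operator passes through the Perron--Stieltjes integral (which is clear at the level of Riemann--Stieltjes sums by associativity, hence survives the limit). Once (3) is in hand, (4) is automatic, and the remaining variation estimate in (2) reduces to the same Gronwall/variation bookkeeping already used for the first-argument bound; conditions (H1)--(H2) enter only through the previously cited lemmas guaranteeing that $V$ exists, satisfies \eqref{EP}, and is locally of bounded variation.
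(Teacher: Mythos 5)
This lemma is not proved in the paper at all; it is quoted verbatim from Collegari--Federson--Frasson (\cite{CFF-CMJ}, Theorem 4.4), so there is no in-paper argument to compare against. Your reconstruction follows the standard route of that reference: (1) from \eqref{EP} with $t=s$; the cocycle property (3) by showing $t\mapsto V(t,r)V(r,s)$ and $t\mapsto V(t,s)$ solve the same initial value problem at $t=r$ and invoking uniqueness (the commutation of right multiplication with the integral is indeed clear at the level of Riemann--Stieltjes sums); (4) as a corollary of (1) and (3); and the variation bounds by the estimate $\|\int d[A]g\|\le\int\|g\|\,d(\mathrm{var}\,A)$ plus telescoping. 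That is all correct and is essentially the proof in the cited source.

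One step is too quick, though it does not sink the lemma. The uniform bound $\|V(t,s)\|\le e^{\mathrm{var}_{a_1}^{a_2}A}$ ``for all $t,s\in[a_1,a_2]$'' does not follow from a single forward Gronwall application: for $t<s$ the integral equation \eqref{EP} runs backwards, and at each jump of $A$ one must invert $I-[A(r)-A(r^-)]$ (resp.\ $I+[A(r^+)-A(r)]$); this is exactly where hypothesis (H2) enters, and the resulting constant is of the form $Ce^{C\,\mathrm{var}_s^t A}$ with $C$ the bound on those inverses (this is the estimate from Schwabik's Theorem 6.15 that the paper itself uses later in the proof of Lemma \ref{xx-ineq}). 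Relatedly, the Stieltjes Gronwall lemma you invoke requires some care at jump points of the nondecreasing majorant $h=\mathrm{var}_{a_1}^{\cdot}A$. Since the statement only asserts the existence of \emph{some} $N$, your argument survives once $N_0$ is replaced by $Ce^{C\,\mathrm{var}_{a_1}^{a_2}A}$ and the backward direction is handled via (H2) (or, more cheaply, by first proving (3)--(4) for $t\ge s$ and then defining/bounding $V(t,s)=V(s,t)^{-1}$ for $t<s$); but as written the claimed constant and its one-line justification are not valid for $t<s$.
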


\begin{definition}(exponential dichotomy \cite{BFS-JDE2018})
    We say that a linear GODEs \eqref{LLL} have an exponential dichotomy on $\mathbb{I}$
if there exist a projection $P:\mathscr{X}\to \mathscr{X}$ and constants $K,\alpha>0$ satisfying
\begin{equation}\label{ED}
  \begin{cases}
   \|\mathscr{V}(t)P\mathscr{V}^{-1}(s)\|\leq Ke^{-\alpha(t-s)} \quad \mathrm{for} \; t\geq  s,\\
   \| \mathscr{V}(t)(Id-P)\mathscr{V}^{-1}(s)\|\leq Ke^{\alpha(t-s)} \quad \mathrm{for} \; t< s,
  \end{cases}
\end{equation}
where $\mathscr{V}(t)=V(t,0)$ and $\mathscr{V}^{-1}(t)=V(0,t)$.
\end{definition}

\begin{definition}\label{s-ed}(strong exponential dichotomy)
   If the linear GODEs \eqref{LLL} have a strong exponential dichotomy on $\mathbb{I}$ if \eqref{ED} holds and there exists a constant $\widetilde{\alpha}\geq \alpha$ such that
\begin{equation}\label{SED}
 \| \mathscr{V}(t)\mathscr{V}^{-1}(s)\|\leq K e^{\widetilde{\alpha}(t-s)}, \quad \mathrm{for} \; t,s\in\mathbb{I}.
\end{equation}
\end{definition}

 Now we give the results on the perturbation theory of GODEs.

\begin{lemma}(\cite{BFS-JDE2018}, Proposition 4.5)
   Suppose that the linear homogenous GODEs \eqref{LLL} satisfy (H1)--(H2) and admit an
exponential dichotomy.    If $g\in G(\mathbb{R},\mathscr{X})$ and the Perron-Stieltjes integrals
 \begin{equation}\label{V3}
   \int_{-\infty}^t d_{\sigma}[\mathscr{V}(t)P\mathscr{V}^{-1}(\sigma)](g(\sigma)-g(0))
 \end{equation}
and
 \begin{equation}\label{V4}
   \int_{t}^\infty d_{\sigma}[\mathscr{V}(t)(I-P)\mathscr{V}^{-1}(\sigma)](g(\sigma)-g(0))
 \end{equation}
exist on $t\in\mathbb{R}$ and the maps \eqref{V3} and \eqref{V4} are bounded, then the non-homogenous GODEs
\begin{equation}\label{NHE}
  \frac{dx}{d\tau}=D[A(t)x+g(t)]
\end{equation}
 have a unique bounded solution.
\end{lemma}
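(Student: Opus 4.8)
The plan is to exhibit the explicit candidate suggested by \eqref{3} (with the nonlinear term $F$ replaced by the regulated function $g$, so that $\int_{0}^{t}Dg(s)=g(t)-g(0)$), namely
\[
x^{*}(t)=\bigl(g(t)-g(0)\bigr)-\int_{-\infty}^{t}d_{\sigma}\bigl[\mathscr{V}(t)P\mathscr{V}^{-1}(\sigma)\bigr]\bigl(g(\sigma)-g(0)\bigr)+\int_{t}^{\infty}d_{\sigma}\bigl[\mathscr{V}(t)(I-P)\mathscr{V}^{-1}(\sigma)\bigr]\bigl(g(\sigma)-g(0)\bigr),
\]
and then to prove, in three steps, that $x^{*}$ is a well-defined bounded function, that it solves \eqref{NHE}, and that it is the unique bounded solution. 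In the first step, the two Perron--Stieltjes integrals converge and define bounded functions of $t$ exactly by the standing hypotheses that \eqref{V3} and \eqref{V4} exist on $\mathbb{R}$ and are bounded; together with $\|g(t)-g(0)\|\leq 2\|g\|_{\infty}$ this gives $\sup_{t\in\mathbb{R}}\|x^{*}(t)\|<\infty$. One also checks $x^{*}\in G(\mathbb{R},\mathscr{X})$: $g$ is regulated by assumption, while $t\mapsto\int_{-\infty}^{t}d_{\sigma}[\mathscr{V}(t)P\mathscr{V}^{-1}(\sigma)](g(\sigma)-g(0))$ and its companion are regulated because $\mathscr{V}(\cdot)$ is regulated (indeed locally of bounded variation) under (H1)--(H2) and the one-sided limits in $t$ may be passed through the integral using the exponential bounds \eqref{ED} as a dominating estimate.

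The computational heart is the second step: to verify that
\[
x^{*}(b)-x^{*}(a)=\int_{a}^{b}d[A(s)]\,x^{*}(s)+\bigl(g(b)-g(a)\bigr)\qquad\text{for all }a<b.
\]
Since GODEs admit no product rule, I would avoid differentiating $x^{*}$ in the factored form of $\mathscr{V}(t)$ applied to a $t$-dependent vector, and instead invoke the variation-of-constants formula for linear GODEs from \cite{CFF-CMJ}: on a compact interval $[a,b]$, a function $x$ solves \eqref{NHE} precisely when
\[
x(t)=V(t,a)\,x(a)+\bigl(g(t)-g(a)\bigr)-\int_{a}^{t}d_{s}\bigl[V(t,s)\bigr]\bigl(g(s)-g(a)\bigr),
\]
so it suffices to check that $x^{*}$ satisfies this identity on every $[a,b]$. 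Substituting the definitions of $x^{*}(a)$ and $x^{*}(t)$, using $V(t,a)\mathscr{V}(a)=\mathscr{V}(t)$ and $V(t,s)=\mathscr{V}(t)\mathscr{V}^{-1}(s)$ from the cocycle and invertibility properties of $V$, splitting $I=P+(I-P)$ inside $\int_{a}^{t}d_{s}[V(t,s)](g(s)-g(a))$, using additivity $\int_{-\infty}^{t}=\int_{-\infty}^{a}+\int_{a}^{t}$ of the Perron--Stieltjes integral, pulling the $\sigma$-independent operator $\mathscr{V}(t)$ out of the integrals, and absorbing the constant $g(0)-g(a)$ via $\int_{a}^{t}d_{s}[\mathscr{V}(t)\mathscr{V}^{-1}(s)]c=[I-V(t,a)]c$, the terms should cancel in pairs and the identity should reduce to a tautology. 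As an alternative route one can verify the displayed defining relation directly from the integral equation \eqref{EP} for $\mathscr{V}$, at the price of computing $\int_{a}^{b}d[A(s)]$ applied to each dichotomy term.

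For uniqueness, let $x_{1},x_{2}$ be bounded solutions of \eqref{NHE}. Then $y:=x_{1}-x_{2}$ is a bounded solution of the homogeneous equation \eqref{LLL}, hence $y(t)=\mathscr{V}(t)y(0)$. Writing $\mathscr{V}(t)Py(0)=\mathscr{V}(t)P\mathscr{V}^{-1}(s)\,y(s)$ and applying the first estimate in \eqref{ED} with $s\leq t$ gives $\|\mathscr{V}(t)Py(0)\|\leq Ke^{-\alpha(t-s)}\|y\|_{\infty}\to 0$ as $s\to-\infty$, whence $Py(0)=0$; symmetrically, $\mathscr{V}(t)(I-P)y(0)=\mathscr{V}(t)(I-P)\mathscr{V}^{-1}(s)\,y(s)$ together with the second estimate in \eqref{ED} and $s>t$ forces $(I-P)y(0)=0$. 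Hence $y(0)=0$, so $y\equiv 0$, and the bounded solution is unique. I expect the second step --- showing that the explicit $x^{*}$ really is a solution --- to be the main obstacle, since it rests entirely on careful bookkeeping of Perron--Stieltjes integrals over the unbounded intervals $(-\infty,t]$ and $[t,\infty)$, on justifying the interchange of a fixed bounded operator with the integral and additivity over subintervals, and on the passage between $V(t,s)$ and $\mathscr{V}(t)\mathscr{V}^{-1}(s)$: none of this is covered by the classical calculus rules that fail for GODEs, but all of it is legitimate within the Kurzweil/Perron--Stieltjes calculus recalled in Section~2.
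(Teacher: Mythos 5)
Your proposal is correct, but note that the paper does not actually prove this lemma: it is imported verbatim from \cite{BFS-JDE2018} (Proposition 4.5) and used as a black box, so the only in-paper point of comparison is the proof of the nonlinear analogue, Theorem \ref{Bound1}. Against that benchmark your route differs in a sensible way. For existence, Theorem \ref{Bound1} must build the bounded solution by Picard iteration because of the nonlinearity, whereas you exploit linearity to write the candidate $x^{*}$ down in closed form and verify the variation-of-constants identity of Lemma \ref{changshu} directly; this is the right simplification for the linear case, and the cancellation you describe does go through --- the three ingredients are additivity of the improper Perron--Stieltjes integral over subintervals, commutation of the fixed operator $V(t,a)$ with $d_{\sigma}[\cdot]$, and the evaluation $\int_{a}^{t}d_{\sigma}[V(t,\sigma)]c=(I-V(t,a))c$ for constant $c$, after which everything collapses to $x^{*}(t)=V(t,a)x^{*}(a)+(g(t)-g(a))-\int_{a}^{t}d_{\sigma}[V(t,\sigma)](g(\sigma)-g(a))$. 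The one soft spot is that you assert this cancellation ("the terms should cancel in pairs") rather than execute it; since it is the computational heart of the argument it should be written out, but it is routine and I verified it closes. Your uniqueness argument is the standard one and is in fact cleaner than the paper's Proposition \ref{Zero}: by letting $s\to-\infty$ and $s\to+\infty$ separately you kill $Py(0)$ and $(I-P)y(0)$ independently, whereas the paper's lower bound $\|x(t)\|\geq K^{-1}e^{-\alpha t}\|P\zeta\|-Ke^{\alpha t}\|(I-P)\zeta\|$ only yields a contradiction directly when $P\zeta\neq 0$ and silently defers the complementary case to "a similar argument." In short: correct, self-contained, and structurally parallel to Step 2 of the paper's Theorem \ref{Bound1} (the same $\pm$-tail decomposition followed by an appeal to the nonexistence of nontrivial bounded homogeneous solutions), with the existence half legitimately replaced by direct verification instead of iteration.
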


  We observe that in Remark 4.11 from \cite{BFS-JDE2018}, if $f$ is bounded with $\|f(t)\|\leq M$ and
\[
V_A:=\sup\{\mathrm{var}_a^b A: a,b\in\mathbb{R}, a<b\}<\infty,
\]
 then Bonotto et al. \cite{BFS-JDE2018} proved the existence of \eqref{V3}
and \eqref{V4}. Furthermore, they obtained
\begin{equation}\label{aaa}
\sup_{t\in\mathbb{R}}\left\|  \int_{-\infty}^t d_{\sigma}[\mathscr{V}(t)P\mathscr{V}^{-1}(\sigma)](f(\sigma)-f(0))\right\| \leq 2MK\|P\|C^3 e^{3CV_A}V_A^2
\end{equation}
and
\begin{equation}\label{bbb}
  \sup_{t\in\mathbb{R}}\left\| \int_{t}^\infty d_{\sigma}[\mathscr{V}(t)(I-P)\mathscr{V}^{-1}(\sigma)](f(\sigma)-f(0))\right\| \leq 2MK(1+\|P\|)C^3 e^{3CV_A}V_A^2.
\end{equation}

\begin{lemma}\label{changshu}(\cite{CFF-CMJ}, Theorem 4.10)
 Assume that  (H1) and (H2) hold.
 If $F:\mathscr{X} \times [{a_1},{a_2}]\to \mathscr{X}$ is  Kurzweil integrable, 
  $[\tilde{{a_1}},\tilde{{a_2}}]\subseteq [{a_1},{a_2}]$
and $t_0\in [\tilde{{a_1}},\tilde{{a_2}}]$, then the GODEs 
\begin{equation*}
  \begin{cases}
   \frac{dx}{d\tau}=D[A(t)x+F(x,t)],\\
   x(t_0)=x_0,
  \end{cases}
\end{equation*}
are equivalent to the following integral equations
\[
x(t)=V(t,t_0)x_0+\int_{t_0}^tDF(x(\tau),\gamma)-\int_{t_0}^t d_{\sigma}[V(t,\sigma)]\left(\int_{t_0}^{\sigma}DF(x(\tau),\gamma) \right).
\]
%
\end{lemma}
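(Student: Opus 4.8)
The plan is to prove both implications simultaneously by freezing the nonlinear term into an exogenous forcing and then appealing to the variation-of-constants formula for \emph{linear} non-homogeneous GODEs. First I would fix a regulated function $x$ on $[\tilde{a_1},\tilde{a_2}]$ and set $P(t):=\int_{t_0}^t DF(x(\tau),\gamma)$. This Kurzweil integral exists because $F$ is Kurzweil integrable and $x$ is regulated; moreover $P(t_0)=0$ and $P$ is itself regulated, since the indefinite Kurzweil integral is a regulated function of its upper limit. Because the Kurzweil integral is linear, because $\int_{t_0}^t D[A(s)x(\tau)]=\int_{t_0}^t d[A(s)]x(s)$ (the identification recalled after \eqref{Z1}), and because for a map depending only on its second variable one has $\int_{t_0}^t D[P(s)]=P(t)-P(t_0)$ by telescoping (as in the example in the Introduction), the assertion ``$x$ solves $\frac{dx}{d\tau}=D[A(t)x+F(x,t)]$, $x(t_0)=x_0$'' is equivalent to
\[
x(t)=x_0+\int_{t_0}^t d[A(s)]x(s)+P(t),\qquad t\in[\tilde{a_1},\tilde{a_2}].
\]

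Hence it suffices to show that this integral equation is, in turn, equivalent to
\[
x(t)=V(t,t_0)x_0+P(t)-\int_{t_0}^t d_\sigma[V(t,\sigma)]P(\sigma),\qquad t\in[\tilde{a_1},\tilde{a_2}],
\]
where the Perron--Stieltjes integral is well defined because $V(t,\cdot)$ is of bounded variation on $[\tilde{a_1},\tilde{a_2}]$ (a property of the fundamental operator recalled above) and $P$ is regulated. Since $P(t_0)=0$, this second equation is exactly the formula in the statement, so the lemma follows. The remaining, purely linear, claim is: for an arbitrary regulated $g$ with $g(t_0)=0$, the equation $u(t)=x_0+\int_{t_0}^t d[A(s)]u(s)+g(t)$ and the explicit formula $u(t)=V(t,t_0)x_0+g(t)-\int_{t_0}^t d_\sigma[V(t,\sigma)]g(\sigma)$ determine the same function. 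I would establish this by first noting that the linear integral equation has a unique regulated solution (Neumann iteration, using $A\in BV([\tilde{a_1},\tilde{a_2}],\mathscr{B}(\mathscr{X}))$), and then substituting the explicit formula into the equation and verifying the identity. The ingredients for that check are: the defining relation \eqref{EP} in the form $\int_{t_0}^t d[A(s)]V(s,t_0)=V(t,t_0)-I$; a Fubini/associativity theorem for iterated Kurzweil--Stieltjes integrals, to rearrange $\int_{t_0}^t d[A(s)]\int_{t_0}^s d_\sigma[V(s,\sigma)]g(\sigma)$; and the adjoint relation $d_\sigma[V(t,\sigma)]=-V(t,\sigma)\,d[A(\sigma)]$ for the fundamental operator. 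Alternatively this linear variation-of-constants formula can simply be quoted from Schwabik's monograph or from \cite{CFF-CMJ}.

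The hard part will be exactly this linear step, and specifically the Stieltjes bookkeeping in it: since $A$ and $V(t,\cdot)$ are only of bounded variation and may be discontinuous — as the solutions themselves generally are — the chain and product rules of calculus fail, so the integration-by-parts and Fubini manipulations above actually carry jump corrections built from the one-sided jumps $\Delta^+A$ and $\Delta^-A$, and one must check that the precise form of the adjoint equation for $V(t,\cdot)$ forces all of them to cancel, leaving the clean formula with no extra boundary or jump terms. Once that cancellation is secured, the rest — existence of the various indefinite Kurzweil and Perron--Stieltjes integrals and regularity of $P$ and of $x$ — is routine.
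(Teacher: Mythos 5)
The paper offers no proof of this lemma: it is imported verbatim as Theorem 4.10 of \cite{CFF-CMJ}, so there is no in-paper argument to compare against. Your outline is correct and is essentially the proof given in that reference --- freeze the nonlinear term as a regulated forcing $P(t)=\int_{t_0}^t DF(x(\tau),\gamma)$ with $P(t_0)=0$ and invoke the variation-of-constants formula for the non-homogeneous linear GODE --- and you correctly flag the only delicate point, namely that the backward (adjoint) equation for $V(t,\cdot)$ in the Perron--Stieltjes setting is not literally $d_\sigma[V(t,\sigma)]=-V(t,\sigma)\,d[A(\sigma)]$ but carries one-sided jump corrections built from $\Delta^{\pm}A$ that must be checked to cancel (Schwabik's Theorem 6.18 is the precise statement one needs).
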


We then define  a special class of functions $F:\mathscr{X}\times\mathbb{I}\to \mathscr{X}$. For convenience, we write $\Omega:=\mathscr{X}\times\mathbb{I}$.
\begin{definition}\cite{BFS-JDE2018}
Given a function $h:I \to \mathbb{R}$ is nondecreasing.
 We say that a function $F:\Omega\to \mathscr{X}$ belongs to the class $\mathscr{F}(\Omega,h)$ if
\begin{equation}\label{HHH1}
  \|F(x,t_2)-F(x,t_1)\|\leq |h(t_2)-h(t_1)|
\end{equation}
for any $(x,t_2)$ and $(x,t_1)\in\Omega$ and
\begin{equation}\label{HHH2}
  \|F(x,t_2)-F(x,t_1)-F(z,t_2)+F(z,t_1)\|\leq \|x-z\||h(t_2)-h(t_1)|
\end{equation}
for any $(x,t_2)$, $(x,t_1)$, $(z,t_2)$ and $(z,t_2)\in\Omega$.
\end{definition}

\section{The formulas for bounded solutions of the nonlinear GODEs}

   In the present paper, we consider the following nonlinear GODEs
\begin{equation}\label{NL}
  \frac{dx}{d\tau}=D[A(t)x+F(x,t)],
\end{equation}
where $A:\mathbb{R}\to \mathscr{B}(\mathscr{X})$ is a bounded linear operator and $F: \mathscr{X}\times \mathbb{R}\to \mathscr{X}$ is Kurzweil integrable.
  Furthermore, we make the following assumptions on $A$ and $F$:\\
(A1) suppose that \eqref{LLL} admits an exponential dichotomy;\\
(A2) there exists a positive constant $C>0$ such that $\|[I-(A(t)-A(t^-))]^{-1}\|\leq C$,
$\|[I-(A(t^+)-A(t))]^{-1}\|\leq C$ and
\[
V_A:=\sup\{\mathrm{var}_a^b A: a,b\in\mathbb{R}, a<b\}<\infty;
\]
(A3) the function $F\in\mathscr{F}(\Omega,h)$, where $h:\mathbb{R}\to \mathbb{R}$ is a nondecreasing function such that
\[
V_h:=\sup\{\mathrm{var}_a^b h: a,b\in\mathbb{R}, a<b\}<\infty.
\]

Then we state the following result in this section. 
\begin{theorem}\label{Bound1}
  If conditions (A1)--(A3) hold, then the nonlinear GODEs \eqref{NL} have a unique bounded solution, which is defined by
\[\begin{split}
x(t)=& \int_0^t DF(x(\tau),s)-\int_{-\infty}^t d_{\sigma}[\mathscr{V}(t)P\mathscr{V}^{-1}(\sigma)]\int_0^\sigma DF(x(\tau),s) \\
&+ \int_{t}^\infty d_{\sigma}[\mathscr{V}(t)(I-P)\mathscr{V}^{-1}(\sigma)] \int_0^\sigma DF(x(\tau),s).
\end{split} \]
\end{theorem}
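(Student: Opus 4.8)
The plan is to construct the bounded solution by Picard's stepwise approximation, as the authors indicate, rather than by a direct fixed-point argument on the nonlinear terms. First I would define the sequence $\{x_n\}$ by $x_0(t)\equiv 0$ and, for $n\geq 1$,
\[
\begin{split}
x_{n}(t)=&\int_0^t DF(x_{n-1}(\tau),s)-\int_{-\infty}^t d_{\sigma}[\mathscr{V}(t)P\mathscr{V}^{-1}(\sigma)]\int_0^\sigma DF(x_{n-1}(\tau),s)\\
&+\int_{t}^\infty d_{\sigma}[\mathscr{V}(t)(I-P)\mathscr{V}^{-1}(\sigma)]\int_0^\sigma DF(x_{n-1}(\tau),s).
\end{split}
\]
The first task is to check this sequence is well-defined: each $x_{n-1}$ lies in $G(\mathbb{R},\mathscr{X})$, so $s\mapsto F(x_{n-1}(s),s)$ inherits the regularity needed for the Kurzweil integral $\int_0^\sigma DF(x_{n-1}(\tau),s)$ to exist (using (A3) and $F\in\mathscr{F}(\Omega,h)$), and then the Perron--Stieltjes integrals over $(-\infty,t]$ and $[t,\infty)$ converge and are bounded by the estimates \eqref{aaa}--\eqref{bbb} with $g(\sigma)=\int_0^\sigma DF(x_{n-1}(\tau),s)$ playing the role of $f$; here $V_h$ from (A3) controls the total variation of this primitive, so $M$ in \eqref{aaa}--\eqref{bbb} can be taken proportional to $V_h$. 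One also needs $x_n\in G(\mathbb{R},\mathscr{X})$ to continue the recursion, which follows because each term is a Kurzweil primitive (hence regulated) or a Perron--Stieltjes integral of such against a bounded-variation kernel.

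Next I would establish a uniform bound $\|x_n\|_\infty\leq R$ for all $n$, where $R$ depends on $K,\|P\|,\alpha,C,V_A,V_h$; this comes directly from applying \eqref{aaa}--\eqref{bbb} together with the elementary bound on $\|\int_0^t DF(x_{n-1}(\tau),s)\|\leq V_h$ coming from \eqref{HHH1}. Then I would prove the contraction estimate
\[
\|x_{n+1}-x_n\|_\infty\leq \theta\,\|x_n-x_{n-1}\|_\infty,
\]
with $\theta<1$. The key input is the Lipschitz-type condition \eqref{HHH2}: for the difference one estimates $\|\int_0^\sigma D[F(x_n(\tau),s)-F(x_{n-1}(\tau),s)]\|\leq \|x_n-x_{n-1}\|_\infty\, V_h$, and feeding this into \eqref{aaa}--\eqref{bbb} (again as the "$M$" with $M=\|x_n-x_{n-1}\|_\infty V_h$) yields a factor of order $K(1+\|P\|)C^3 e^{3CV_A}V_A^2 V_h$ plus the direct-term contribution $V_h\|x_n-x_{n-1}\|_\infty$. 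This is where a \emph{smallness hypothesis} is needed — either $V_h$ (or the relevant combination of constants) must be assumed small enough that $\theta<1$. If the paper's standing assumptions already encode such smallness I would invoke it; otherwise the contraction holds only on a rescaled subinterval and one patches globally using the group property $V(t,s)=V(t,r)V(r,s)$ from Lemma~2.5(3). Granting $\theta<1$, $\{x_n\}$ is Cauchy in the Banach space $(G(\mathbb{R},\mathscr{X}),\|\cdot\|_\infty)$ and converges to some bounded $x$.

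Finally I would pass to the limit in the recursion. Continuity of $F$ in the first variable (a consequence of \eqref{HHH2}) plus uniform convergence lets one pass the limit inside all three integrals — for the Kurzweil term this uses the convergence theorem for Kurzweil integrals together with the uniform variation bound $V_h$; for the Perron--Stieltjes terms it uses the estimates \eqref{aaa}--\eqref{bbb} applied to $g(\sigma)=\int_0^\sigma D[F(x(\tau),s)-F(x_{n-1}(\tau),s)]$, which tends to zero. Hence $x$ satisfies the displayed fixed-point identity. To identify $x$ as a solution of \eqref{NL}, I would verify that the displayed formula, when restricted to any compact $[t_0,t_1]$, matches the variation-of-constants representation of Lemma~\ref{changshu}; this is a bookkeeping computation splitting $\int_{-\infty}^t=\int_{-\infty}^{t_0}+\int_{t_0}^t$ and using $\mathscr{V}(t)P\mathscr{V}^{-1}(\sigma)+\mathscr{V}(t)(I-P)\mathscr{V}^{-1}(\sigma)=V(t,\sigma)$. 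Uniqueness among bounded solutions follows from the same contraction estimate: any two bounded solutions $x,y$ satisfy $\|x-y\|_\infty\leq\theta\|x-y\|_\infty$, forcing $x=y$.

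\textbf{Main obstacle.} The principal difficulty is the contraction step: making \eqref{HHH2} interact correctly with the Kurzweil primitive $\sigma\mapsto\int_0^\sigma DF(x_n(\tau),s)$ so that its total variation is genuinely controlled by $\|x_n-x_{n-1}\|_\infty V_h$ (rather than by something uncontrolled at $\pm\infty$), and then absorbing the resulting constant $K(1+\|P\|)C^3 e^{3CV_A}V_A^2$ into a contraction factor — since that constant is not small, the argument must either carry an explicit smallness assumption on $V_h$ or be localized and then globalized via the cocycle property of $V$.
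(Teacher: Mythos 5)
Your existence argument is essentially the paper's: the same Picard iteration from $x_0\equiv 0$, the same use of \eqref{HHH1} to bound the Kurzweil primitive $\int_0^t DF(x_{n-1}(\tau),s)$ by $2V_h$, and the same use of \eqref{HHH2} together with the estimates \eqref{aaa}--\eqref{bbb} to obtain a contraction factor $\delta<1$ under smallness of $V_h$ --- the paper simply assumes $V_h$ sufficiently small, exactly the hypothesis you flag, and does not localize and glue via the cocycle property. The identification of the limit as a solution of \eqref{NL}, via the splittings $\int_{-\infty}^t=\int_{-\infty}^0+\int_0^t$, $\int_t^{\infty}=\int_t^0+\int_0^{\infty}$ and $P+(I-P)=I$, is also the computation the paper performs (in the reverse direction).

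The gap is in uniqueness. You assert that any two bounded solutions $x,y$ satisfy $\|x-y\|_\infty\le\theta\|x-y\|_\infty$, but the contraction estimate applies only after both solutions are known to satisfy the fixed-point formula. A priori a bounded solution $y$ of \eqref{NL} is given by Lemma \ref{changshu} as
\[
y(t)=\mathscr{V}(t)y(0)+\int_0^t DF(y(\tau),s)-\int_0^t d_\sigma[\mathscr{V}(t)\mathscr{V}^{-1}(\sigma)]\int_0^\sigma DF(y(\tau),s),
\]
and after the $\pm$ splitting the discrepancy between this and the fixed-point formula is a term of the form $\mathscr{V}(t)\bigl(y(0)+y_1+y_2\bigr)$, i.e.\ a bounded solution of the homogeneous linear GODE \eqref{LLL}. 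To conclude that it vanishes one needs Proposition \ref{Zero} (an exponential dichotomy forces the linear equation to have no nontrivial bounded solutions); this is the one genuinely nontrivial ingredient your proposal omits. Without it the difference $x-y$ carries an uncontrolled free term $\mathscr{V}(t)\bigl(x(0)-y(0)\bigr)$, and the inequality $\|x-y\|_\infty\le\theta\|x-y\|_\infty$ does not follow. The same point resurfaces when you "identify $x$ as a solution": matching against the variation-of-constants representation again produces a residual $\mathscr{V}(t)\zeta$ that must be disposed of by the same proposition.
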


\begin{remark}
   We note that different from the expression of bounded solution under the classical ODEs,
Theorem \ref{Bound1} presents the formula of bounded solution of nonlinear GODEs in the sense of Kurzweil integral.
  If the nonlinear equation \eqref{NL} is in a Riemann-integrable or Lebesgue-integrable environment,
then the formula of the bounded solution is
 \[
x(t)= \int_{-\infty}^t \mathscr{V}(t)P\mathscr{V}^{-1}(\sigma) F(x(\sigma),\sigma)d\sigma - \int_{t}^\infty \mathscr{V}(t)(I-P)\mathscr{V}^{-1}(\sigma) F(x(\sigma),\sigma)d\sigma.
 \]
Obviously,  in a Riemann-integrable or Lebesgue-integrable environment, the result is the same as that of the classical ODEs.
\end{remark}

\begin{remark}
    As pointed out by Schwabik \cite{S-BOOK1992},
the GODEs often do not involve differentiation. Therefore, it is indispensable to deal with the relationship between various integral equations throughout the proof.
\end{remark}

Before giving the proof, we present a proposition that plays an important role in Theorem \ref{Bound1} and subsequent linearization results.

\begin{proposition}\label{Zero}
  Suppose that the linear homogeneous GODEs \eqref{LLL}  have an
exponential dichotomy. Then the GODEs \eqref{LLL} have no non-trivial bounded solutions.
\end{proposition}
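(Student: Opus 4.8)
The plan is to combine the fundamental-operator representation of solutions of \eqref{LLL} with a splitting of the initial datum along the dichotomy projection $P$. Recall from the fundamental theory quoted above that, by uniqueness for the initial value problem \eqref{Z2}, every solution $x$ of \eqref{LLL} on $\mathbb{R}$ coincides with $x(t)=V(t,0)x(0)=\mathscr{V}(t)x(0)$; equivalently, since $\mathscr{V}(0)=I$ and $V^{-1}(s,0)=V(0,s)=\mathscr{V}^{-1}(s)$, one has $x(0)=\mathscr{V}^{-1}(s)x(s)$ for every $s\in\mathbb{R}$. Suppose now that $x$ is a \emph{bounded} solution, say $\|x(t)\|\le M$ for all $t\in\mathbb{R}$, and write $x(0)=Px(0)+(I-P)x(0)$.

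For the unstable component: for every $s>0$,
\[
(I-P)x(0)=(I-P)\mathscr{V}^{-1}(s)x(s)=\mathscr{V}(0)(I-P)\mathscr{V}^{-1}(s)x(s),
\]
so the second estimate of \eqref{ED}, applied with $t=0<s$, gives $\|(I-P)x(0)\|\le Ke^{-\alpha s}\|x(s)\|\le KMe^{-\alpha s}$, and letting $s\to+\infty$ forces $(I-P)x(0)=0$. Symmetrically, for every $s<0$,
\[
Px(0)=P\mathscr{V}^{-1}(s)x(s)=\mathscr{V}(0)P\mathscr{V}^{-1}(s)x(s),
\]
so the first estimate of \eqref{ED}, applied with $t=0\ge s$, gives $\|Px(0)\|\le Ke^{\alpha s}\|x(s)\|\le KMe^{\alpha s}$, and letting $s\to-\infty$ forces $Px(0)=0$. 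Hence $x(0)=0$, and therefore $x(t)=\mathscr{V}(t)x(0)=0$ for all $t\in\mathbb{R}$; that is, \eqref{LLL} admits no non-trivial bounded solution.

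I do not expect a genuine obstacle here: the argument is short and is essentially the classical one for linear ODEs with an exponential dichotomy. The two points that call for a little care are (i) that the identity $x(t)=\mathscr{V}(t)x(0)$ be legitimately applied to an \emph{arbitrary} solution, which in the GODE setting may be merely regulated and discontinuous --- this is exactly what the existence--uniqueness result for \eqref{Z2} together with the fundamental-operator representation provide, so no statement about differentiability is invoked; and (ii) that the hyperbolic part of \eqref{ED} is written with the strict inequality $t<s$, which is harmless since we only ever evaluate it at $t=0$ with $s\to+\infty$ (and the stable part at $t=0$ with $s\to-\infty$), where the required sign conditions hold. As indicated above, this proposition is then used as a basic ingredient in the proof of Theorem \ref{Bound1} and in the subsequent linearization results.
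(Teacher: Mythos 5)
Your proof is correct, but it follows a genuinely different (and in fact cleaner) route than the paper's. The paper argues by contradiction: writing $\zeta=x(0)$ and $x(t)=\mathscr{V}(t)P\zeta+\mathscr{V}(t)(I-P)\zeta$, it derives from \eqref{ED} the lower bound $\|\mathscr{V}(t)P\zeta\|\ge K^{-1}e^{-\alpha t}\|P\zeta\|$ for $t\le 0$ together with the decay of the $(I-P)$-component, and then uses the reverse triangle inequality to conclude that $\|x(t)\|\to\infty$ as $t\to-\infty$ unless $P\zeta=0$ (with a symmetric argument as $t\to+\infty$ for $(I-P)\zeta$). You instead prove the statement directly: using $Px(0)=\mathscr{V}(0)P\mathscr{V}^{-1}(s)x(s)$ and $(I-P)x(0)=\mathscr{V}(0)(I-P)\mathscr{V}^{-1}(s)x(s)$, you bound each component of $x(0)$ by $KM e^{-\alpha|s|}$ and let $s\to\mp\infty$. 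Both proofs rest on the same decomposition $x(0)=Px(0)+(I-P)x(0)$ and the same two inequalities in \eqref{ED}, but your version avoids the contradiction, the inversion of the first dichotomy estimate, and the reverse-triangle-inequality step; it also sidesteps a small infelicity in the paper's write-up, where the unboundedness claim tacitly presupposes $P\zeta\neq 0$ and the intermediate conclusion is phrased as ``$x(t)=0$ for all $t\le 0$'' rather than as the vanishing of the respective projected components. Your approach is essentially the direct argument of Bonotto et al.\ \cite{BFS-JDE2018} (Proposition 4.3), from which the paper explicitly states it departs; nothing is lost by either choice, and your handling of points (i) and (ii) about the representation $x(t)=\mathscr{V}(t)x(0)$ and the strict inequality $t<s$ in \eqref{ED} is appropriate.
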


\begin{proof}
Different from Bonotto et al. \cite{BFS-JDE2018} (see Proposition 4.3), we prove it by contradiction.
(i) If $x(t)=0$, then the result is obvious.
(ii) We now suppose that the bounded solution $x(t)\neq 0$.
  Let $\zeta=x(0)$. Notice
\[
x(t)=\mathscr{V}(t)P\zeta+\mathscr{V}(t)(I-P)\zeta.
\]
  Consider the case of $t\leq 0$, from the first inequality of \eqref{ED}, we have
\[\begin{split}
   \|P\zeta\|=& \|\mathscr{V}(0)P\zeta\|=\|\mathscr{V}(0)P\mathscr{V}^{-1}(t)\mathscr{V}(t)P\zeta\| \\
   \leq & \|\mathscr{V}(0)P\mathscr{V}^{-1}(t)\|\|\mathscr{V}(t)P\zeta\|\leq Ke^{\alpha t}\|\mathscr{V}(t)P\zeta\|,
\end{split}\]
thus,
\begin{equation}\label{V1}
  \|\mathscr{V}(t)P\zeta\|\geq K^{-1}e^{-\alpha t}  \|P\zeta\|.
\end{equation}
  On the other hand, we get (also using \eqref{ED})
\begin{equation}\label{V2}
\begin{split}
\|\mathscr{V}(t)(I-P)\zeta\|=& \|\mathscr{V}(t)(I-P)\mathscr{V}^{-1}(0)\mathscr{V}(0)(I-P)\zeta\| \\
\leq& \|\mathscr{V}(t)(I-P)\mathscr{V}^{-1}(0)\|\|\mathscr{V}^{-1}(0)\mathscr{V}(0)(I-P)\zeta\| \leq K\|(I-P)\zeta\| e^{\alpha t}.
\end{split}
\end{equation}
Combining \eqref{V1} and \eqref{V2}, we have
\[\begin{split}
  \|x(t)\|=& \|\mathscr{V}(t)P\zeta+\mathscr{V}(t)(I-P)\zeta\|\\
  \geq& \|\mathscr{V}(t)P\zeta\|-\|\mathscr{V}(t)(I-P)\zeta\| \\
  \geq& K^{-1}e^{-\alpha t}  \|P\zeta\|-Ke^{\alpha t}\|(I-P)\zeta\|,
\end{split}\]
which implies that
\[
\lim\limits_{t\to -\infty}\|x(t)\|=\infty,
\]
and thus $x(t)$ is unbounded solution, which contradicts to the assumption.
  Hence, $x(t)=0$ for all $t\leq 0$.
   A similar argument can be applied to $t\geq 0$, then $x(t)=0$.
\end{proof}

\begin{proof}[Proof of Theorem \ref{Bound1}.]
{\bf Step 1.}  We claim that there exists a bounded solution satisfying Eq. \eqref{NL}.
Indeed, let $x_0 (t):=0$. Then
\[\begin{split}
x_1(t):=& \int_0^t DF(x_0(\tau),s)-\int_{-\infty}^t d_{\sigma}[\mathscr{V}(t)P\mathscr{V}^{-1}(\sigma)]\int_0^\sigma DF(x_0(\tau),s) \\
&+ \int_{t}^\infty d_{\sigma}[\mathscr{V}(t)(I-P)\mathscr{V}^{-1}(\sigma)] \int_0^\sigma DF(x_0(\tau),s).
\end{split} \]
In view of the definition of Kurzweil integral, for each $\varepsilon>0$, there exists a gauge $\varepsilon$ of $[0,t]$ such that for every $\varepsilon$-fine tagged division $D=\{(\tau_j,[s_{j-1},s_j]), j=1,2,\cdots,|D|\}$ of $[0,t]$, we have
\[
\left\|\int_0^t DF(x_0(\tau),s)\right\|=\left\|\sum_{j=1}^{|D|} \left[F(x(\tau_j),s_j)-F(x(\tau_j),s_{j-1})\right]\right\|.
\]
It follows form  condition (A3) that
\begin{equation}\label{LV}
 \left\|\sum_{j=1}^{|D|} \left[F(x(\tau_j),s_j)-F(x(\tau_j),s_{j-1})\right]\right\| \leq \left| \sum_{j=1}^{|D|} h(s_j)-h(s_{j-1})\right|\leq |h(t)-h(0)|\leq 2V_h.
\end{equation}
Then $x_1(t)$ is well defined, since
\[\begin{split}
 \|x_1(t)\|\leq & |h(t)-h(0)|+\int_{-\infty}^t \|d_{\sigma}[\mathscr{V}(t)P\mathscr{V}^{-1}(\sigma)]\| |h(\sigma)-h(0)| \\
 &+\int_{t}^\infty \|d_{\sigma}[\mathscr{V}(t)(I-P)\mathscr{V}^{-1}(\sigma)]\| |h(\sigma)-h(0)|,
\end{split}\]
which  together with  conditions (A1)--(A2) and \eqref{aaa}, \eqref{bbb} yields that
\[\begin{split}
 \|x_1(t)\|\leq 2V_h +2K\|P\|C^3e^{3CV_A}V_A^2 V_h+2K(1+\|P\|)C^3e^{3CV_A}V_A^2 V_h.
\end{split}\]
Taking $t=s$, by \eqref{ED}, we have that $\|P\|\leq K$ and thus $\|x_1(t)\|<\infty$, namely,  $x_1(t)$ is bounded and well-defined.
If for any fixed $m\in\mathbb{N}$, $x_m(t)$ is well defined and bounded, then
\[\begin{split}
  x_{m+1}(t):=& \int_0^t DF(x_m(\tau),s)-\int_{-\infty}^t d_{\sigma}[\mathscr{V}(t)P\mathscr{V}^{-1}(\sigma)]\int_0^\sigma DF(x_m(\tau),s) \\
&+ \int_{t}^\infty d_{\sigma}[\mathscr{V}(t)(I-P)\mathscr{V}^{-1}(\sigma)] \int_0^\sigma DF(x_m(\tau),s)
\end{split}\]
and $x_{m+1}(t)$ is also bounded. By the induction principle, for any $m\in\mathbb{N}$, the function sequence $\{x_m(t)\}_{m=0}^\infty$ is bounded.
Moreover,
\[\begin{split}
\|x_{m+1}(t)-x_m(t)\|\leq& \int_0^t \|DF(x_m(\tau),s)-DF(x_{m-1}(\tau),s)\|\\
&+\int_{-\infty}^t \|d_{\sigma}[\mathscr{V}(t)P\mathscr{V}^{-1}(\sigma)]\|
\int_0^\sigma \|DF(x_m(\tau),s)-DF(x_{m-1}(\tau),s)\| \\
&+   \int_{t}^\infty \|d_{\sigma}[\mathscr{V}(t)(I-P)\mathscr{V}^{-1}(\sigma)]\| \int_0^\sigma \|DF(x_m(\tau),s)-DF(x_{m-1}(\tau),s)\|.
\end{split}\]
From the definition of Kurzweil integral and the condition (A3), it follows that
\begin{equation}\label{Bound2}
\begin{split}
  &\left\|\int_0^t DF(x(\tau),s)-DF(y(\tau),s)\right\|\\
  =&
\left\|\sum_{j=1}^{|D|}\left[F(x(\tau_j),s_j)-F(x(\tau_j),s_{j-1})\right]-\left[F(y(\tau_j),s_j)-F(y(\tau_j),s_{j-1})\right]\right\|  \\
\leq& \sum_{j=1}^{|D|}\|x(\tau_j)-y(\tau_j)\|\cdot |h(s_j)-h(s_{j-1})| \\
=& \int_0^t \|x(\tau)-y(\tau)\|dh(s).
\end{split}
\end{equation}
By using condition (A3), we have
\[\begin{split}
\|x_{m+1}(t)-x_m(t)\|\leq& \int_0^t \|x_m(\tau)-x_{m-1}(\tau)\|dh(s) \\
&+\int_{-\infty}^t \|d_{\sigma}[\mathscr{V}(t)P\mathscr{V}^{-1}(\sigma)]\|\int_0^\sigma \|x_m(\tau)-x_{m-1}(\tau)\|dh(s) \\
&+ \int_{t}^\infty \|d_{\sigma}[\mathscr{V}(t)(I-P)\mathscr{V}^{-1}(\sigma)]\| \int_0^\sigma\|x_m(\tau)-x_{m-1}(\tau)\|dh(s).
\end{split}\]
Set $T_m:=\sup\limits_{t\in\mathbb{R}}\|x_{m+1}(t)-x_{m}(t)\|$.
  We derive from conditions (A1)-(A2) and \eqref{aaa}, \eqref{bbb} that
\[\begin{split}
T_m\leq&  |h(t)-h(0)|\cdot T_{m-1}+K^2 C^3e^{3CV_A}V_A^2 |h(t)-h(0)|\cdot T_{m-1}\\
 &+K(1+K)C^3e^{3CV_A}V_A^2 |h(t)-h(0)|\cdot T_{m-1}.
\end{split}\]
Taking $V_h$ sufficiently small. Then there exist a positive constant $\delta<1$ such that $T_m\leq \delta \cdot T_{m-1}$.
Hence, $\sum_{m=1}^{\infty}\|x_{m+1}(t)-x_{m}(t)\|$  converges uniformly on $\mathbb{R}$, which means that the function sequence
     $\{x_{m}(t)\}_{m=0}^\infty$ also converges uniformly on $\mathbb{R}$. We write
\[
\lim\limits_{m\to\infty}x_m(t)=\widetilde{x}(t),
\]
thus $\widetilde{x}(t)$ is bounded and
\begin{equation}\label{BB1}
  \begin{split}
\widetilde{x}(t)=& \int_0^t DF(\widetilde{x}(\tau),s)-\int_{-\infty}^t d_{\sigma}[\mathscr{V}(t)P\mathscr{V}^{-1}(\sigma)]\int_0^\sigma DF(\widetilde{x}(\tau),s) \\
&+ \int_{t}^\infty d_{\sigma}[\mathscr{V}(t)(I-P)\mathscr{V}^{-1}(\sigma)] \int_0^\sigma DF(\widetilde{x}(\tau),s).
\end{split}
\end{equation}
{\bf Step 2.}  We claim the uniqueness of the solution for GODE \eqref{NL}.
Let $y(t)$ be another bounded solution of GODE \eqref{NL} satisfying the initial value $y(0) = y$.
From Lemma \ref{changshu}, we have
\[\begin{split}
 y(t)=&\mathscr{V}(t)y+\int_0^t DF(y(\tau),s)-\int_0^t d_\sigma[\mathscr{V}(t)\mathscr{V}^{-1}(\sigma)]\int_0^\sigma DF(y(\tau),s)\\
 =&\mathscr{V}(t)y+\int_0^t DF(y(\tau),s)-\int_0^t d_\sigma[\mathscr{V}(t)P\mathscr{V}^{-1}(\sigma)]\int_0^\sigma DF(y(\tau),s) \\
 &\pm \int_{-\infty}^t d_\sigma[\mathscr{V}(t)P\mathscr{V}^{-1}(\sigma)]\int_0^\sigma DF(y(\tau),s)\\
 &-\int_0^t d_\sigma[\mathscr{V}(t)(I-P)\mathscr{V}^{-1}(\sigma)]\int_0^\sigma DF(y(\tau),s) \\
 &\pm \int_{t}^\infty d_\sigma[\mathscr{V}(t)(I-P)\mathscr{V}^{-1}(\sigma)]\int_0^\sigma DF(y(\tau),s)\\
 =& \mathscr{V}(t)y+\int_0^t DF(y(\tau),s)- \int_{-\infty}^t d_\sigma[\mathscr{V}(t)P\mathscr{V}^{-1}(\sigma)]\int_0^\sigma DF(y(\tau),s)\\
 &+ \int_{t}^\infty d_\sigma[\mathscr{V}(t)(I-P)\mathscr{V}^{-1}(\sigma)]\int_0^\sigma DF(y(\tau),s)\\
 &+\mathscr{V}(t)\left( \int_{-\infty}^0 d_\sigma[P\mathscr{V}^{-1}(\sigma)]-\int_0^\infty d_\sigma[(I-P)\mathscr{V}^{-1}(\sigma)]\right)\int_0^\sigma DF(y(\tau),s).
\end{split} \]
Set
\begin{equation}\label{w1}
y_1:=\int_{-\infty}^0 d_\sigma[P\mathscr{V}^{-1}(\sigma)]\int_0^\sigma DF(y(\tau),s)
\end{equation}
and
\begin{equation}\label{w2}
y_2:=\int_0^\infty d_\sigma[(I-P)\mathscr{V}^{-1}(\sigma)]\int_0^\sigma DF(y(\tau),s).
\end{equation}
Then it is not difficult to show that the above two integrals are well defined.
Note that $y(t)$ is bounded, and
since by using \eqref{LV} and conditions (A1)--(A3),  the following expression
\[\begin{split}
& \int_0^t DF(y(\tau),s)- \int_{-\infty}^t d_\sigma[\mathscr{V}(t)P\mathscr{V}^{-1}(\sigma)]\int_0^\sigma DF(y(\tau),s)\\
 &+ \int_{t}^\infty d_\sigma[\mathscr{V}(t)(I-P)\mathscr{V}^{-1}(\sigma)]\int_0^\sigma DF(y(\tau),s)
\end{split}\]
is bounded. Then
$\mathscr{V}(t)(y+y_1+y_2)$
is also bounded, which implies that
\[
\mathscr{V}(t)(y+y_1+y_2)=0
\]
due to Proposition \ref{Zero}.
  Therefore,
\begin{equation}\label{BB2}
\begin{split}
y(t)=&\int_0^t DF(y(\tau),s)- \int_{-\infty}^t d_\sigma[\mathscr{V}(t)P\mathscr{V}^{-1}(\sigma)]\int_0^\sigma DF(y(\tau),s)\\
 &+ \int_{t}^\infty d_\sigma[\mathscr{V}(t)(I-P)\mathscr{V}^{-1}(\sigma)]\int_0^\sigma DF(y(\tau),s).
\end{split}
\end{equation}
 Combining \eqref{BB1} and \eqref{BB2}, we have
\begin{equation}\label{XY}
\begin{split}
\|\widetilde{x}(t)-y(t)\|\leq&  \int_0^t \|DF(\widetilde{x}(\tau),s)-DF(y(\tau),s)\|\\
&+\int_{-\infty}^t \|d_{\sigma}[\mathscr{V}(t)P\mathscr{V}^{-1}(\sigma)]\|
\int_0^\sigma \|DF(\widetilde{x}(\tau),s)-DF(y(\tau),s)\| \\
&+   \int_{t}^\infty \|d_{\sigma}[\mathscr{V}(t)(I-P)\mathscr{V}^{-1}(\sigma)]\| \int_0^\sigma \|DF(\widetilde{x}(\tau),s)-DF(y(\tau),s)\|,
\end{split}
\end{equation}
which implies
\[\sup_{t\in\mathbb{R}}\|\widetilde{x}(t)-y(t)\| \leq \delta \sup_{t\in\mathbb{R}}\|\widetilde{x}(t)-y(t)\|,\]
due to conditions (A1)--(A3) and \eqref{aaa}, \eqref{bbb}. Since $V_h$ is sufficiently small such that $\delta<1$, we conclude that $\widetilde{x}=y$.
Consequently, the uniqueness is claimed.
\end{proof}

\section{Topological conjugacy for the GODEs}
  In this section, we presents a version of  Hartman-Grobman-type theorem  in the framework of GODEs.

\begin{theorem}\label{linearization-thm}
Assume that  assumptions (A1)--(A3) hold. For the sufficiently small $V_h$,  the nonlinear GODEs \eqref{NL} are topologically conjugated to the linear GODEs \eqref{LLL}.
\end{theorem}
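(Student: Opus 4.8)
The plan is to exhibit an explicit pair of maps $H,G:\mathscr{X}\times\mathbb{R}\to\mathscr{X}$ that conjugate the flow of \eqref{NL} to that of \eqref{LLL}, built from the bounded-solution formula of Theorem~\ref{Bound1}. Write $V(t,s)$ for the evolution operator of \eqref{LLL} and, for each $\xi\in\mathscr{X}$ and each $t_0\in\mathbb{R}$, let $u(\cdot\,;t_0,\xi)$ denote the solution of \eqref{NL} with $u(t_0)=\xi$. The first step is to show that for each fixed $(t_0,\xi)$ the function $t\mapsto u(t;t_0,\xi)-V(t,t_0)\xi$ is bounded on $\mathbb{R}$; more precisely, the ``correction term'' solves a nonlinear GODE of the type covered by Theorem~\ref{Bound1} (its nonlinearity still lies in $\mathscr{F}(\Omega,h)$ up to the small-$V_h$ threshold), so it has a \emph{unique} bounded solution. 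This gives a well-defined map
\[
h(t_0,\xi):=\xi+\Big(u(t;t_0,\xi)-V(t,t_0)\xi\Big)\Big|_{t=t_0}
\]
together with its ``linear-side'' counterpart $g$ obtained by the symmetric construction (starting from a solution of the \emph{linear} equation and adding the unique bounded solution of the equation its image must satisfy under \eqref{NL}). Define $H(t_0,\cdot)=h(t_0,\cdot)$ and $G(t_0,\cdot)=g(t_0,\cdot)$.

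The second step is to verify the three defining properties of a topological conjugacy. (a) \emph{Equivariance:} $H$ carries solutions of \eqref{NL} to solutions of \eqref{LLL}, i.e. $V(t,t_0)\,H(t_0,\xi)=H\big(t,u(t;t_0,\xi)\big)$, and similarly $G$ carries solutions of \eqref{LLL} to solutions of \eqref{NL}; both follow from the \emph{uniqueness} clause in Theorem~\ref{Bound1}, since two candidate bounded solutions of the same auxiliary GODE must coincide — this is exactly the role Proposition~\ref{Zero} plays. (b) \emph{$H$ and $G$ are mutual inverses:} the composition $G\circ H$ moves a point by the unique bounded solution of an auxiliary GODE whose nonlinearity is now a difference $F-F$ that vanishes, hence by Proposition~\ref{Zero} the displacement is $0$, so $G\circ H=\mathrm{id}$; the reverse composition is symmetric. (c) \emph{Continuity:} $H(t_0,\cdot)-\mathrm{id}$ is given by the bounded-solution integral formula of Theorem~\ref{Bound1} evaluated at $t=t_0$, and the Lipschitz-in-$x$ estimate \eqref{HHH2} together with \eqref{aaa}--\eqref{bbb} shows the dependence on $\xi$ is Lipschitz (indeed a contraction perturbation of the identity) uniformly in $t_0$; the same for $G$. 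Hence $H(t_0,\cdot)$ and $G(t_0,\cdot)$ are homeomorphisms for every $t_0$, and $H,G$ are continuous in $t_0$ because $V(t,t_0)$ is of locally bounded variation in both arguments and the Perron–Stieltjes integrals defining the correction terms depend continuously on the endpoint.

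The main obstacle is step~(a)–(b): in the classical ODE setting one would differentiate the relation $H(t,\phi_t x)=\psi_t H(x)$ and match vector fields, but for GODEs there is \emph{no} chain rule and no differentiation available, so equivariance cannot be checked ``infinitesimally.'' Instead the argument must be carried out entirely at the level of Kurzweil/Perron–Stieltjes integral equations: one shows that both sides of the desired identity are bounded solutions of one and the same nonlinear GODE with initial data matched at a single instant, and then invokes the uniqueness part of Theorem~\ref{Bound1} (equivalently Proposition~\ref{Zero}) to force them equal. Getting the auxiliary equation into exactly the form of \eqref{NL} — in particular checking that the relevant perturbation still belongs to a class $\mathscr{F}(\Omega,\tilde h)$ with $V_{\tilde h}$ small — is where the ``nontrivial estimate techniques'' alluded to in the introduction are needed, and where the smallness of $V_h$ is genuinely used.
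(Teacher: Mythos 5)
Your overall strategy --- build the conjugating maps out of the bounded-solution machinery of Theorem~\ref{Bound1} and verify equivariance and invertibility through uniqueness of bounded solutions and Proposition~\ref{Zero} --- is the same as the paper's, but your concrete construction of $h$ does not work. First, the function $t\mapsto u(t;t_0,\xi)-V(t,t_0)\xi$ is in general \emph{not} bounded on $\mathbb{R}$: by Lemma~\ref{changshu} it equals $\int_{t_0}^t DF(u(\tau),s)-\int_{t_0}^t d_\sigma[V(t,\sigma)]\int_{t_0}^\sigma DF(u(\tau),s)$, and after splitting $V(t,\sigma)=\mathscr{V}(t)P\mathscr{V}^{-1}(\sigma)+\mathscr{V}(t)(I-P)\mathscr{V}^{-1}(\sigma)$ the boundary terms $\mathscr{V}(t)\int_{-\infty}^{t_0}d_\sigma[P\mathscr{V}^{-1}(\sigma)](\cdots)$ and $\mathscr{V}(t)\int_{t_0}^{\infty}d_\sigma[(I-P)\mathscr{V}^{-1}(\sigma)](\cdots)$ grow exponentially as $t\to-\infty$ and $t\to+\infty$ respectively; the solution of the auxiliary equation that vanishes at $t_0$ is not the bounded one. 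Second, and fatally, your formula $h(t_0,\xi)=\xi+\bigl(u(t;t_0,\xi)-V(t,t_0)\xi\bigr)\big|_{t=t_0}$ evaluates this correction at $t=t_0$, where it is identically zero because $u(t_0;t_0,\xi)=\xi=V(t_0,t_0)\xi$; so your $h$ is the identity map, which cannot conjugate \eqref{NL} to \eqref{LLL}. What is actually needed is to take $\psi(t_0,\xi)$ to be the value at $t=t_0$ of the explicit bounded-solution integral formula driven by the forcing $s\mapsto F(u(\cdot\,;t_0,\xi),s)$ (the three-term expression with $\int_0^{t_0}$, $\int_{-\infty}^{t_0}$ and $\int_{t_0}^{\infty}$), which is generically nonzero. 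Likewise your ``symmetric'' map $g$ cannot be written down in closed form: the forcing in its defining formula involves $g$ itself (the argument of $F$ is the image point), so $g$ must be obtained as the fixed point of a contraction on the space of bounded maps $\phi:\mathbb{R}\times\mathscr{X}\to\mathscr{X}$ --- this is where the smallness of $V_h$ first enters, and your proposal omits it.

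There is a second gap in your step~(b). For one composition ($\Psi\circ\Phi$) your argument is fine: the discrepancy is a bounded solution of the homogeneous \emph{linear} GODE and Proposition~\ref{Zero} kills it. But for the other composition the discrepancy $H(t)=\Phi(t,\Psi(t,x(t)))-x(t)$ is a difference of two solutions of the \emph{nonlinear} equation; the relevant perturbation is $DF(H(r)+x(r),s)-DF(x(r),s)$, which does not vanish, and Proposition~\ref{Zero} alone does not apply. One must first use Proposition~\ref{Zero} to eliminate the homogeneous part of the integral identity for $H$, and then run a contraction estimate $\sup_{t}\|H(t)\|\le\delta\,\sup_{t}\|H(t)\|$ with $\delta<1$ (using small $V_h$ again) to force $H\equiv0$. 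Your assertion that the nonlinearity is ``a difference $F-F$ that vanishes'' is only true in one of the two directions.
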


\begin{remark}
  Theorem \ref{linearization-thm} states the Hartman-Grobman theorem in the framework of GODEs.
   We construct two approximate identity maps $\Phi$ and $\Psi$,\\
(i) prove that $\Phi$ sends the solution of the linear GODEs onto the solution of  the nonlinear GODEs;\\
(ii) prove that $\Psi$   maps the solution of the nonlinear GODEs to the solution of the linear GODEs;\\
(iii) verify that $\Phi$ is a homeomorphism map and its inverse is $\Psi$, that is, $\Phi\circ \Psi=I$ and $\Psi\circ \Phi=I$.
Finally, the definition of topological conjugacy for nonautonomous systems proposed by Palmer \cite{Palmer1} is verified.
Different from Palmer, in the GODEs' environment, we  use the Kurzweil integral theory to deal with the relationship between more complicated integral equations.
\end{remark}

\begin{proof}
{\bf Step 1.} We establish the existence of the map $\Phi$, where $\Phi(t,x):=x+\phi(t,x)$.
Let $\Theta$ be the space of all maps $\phi:\mathbb{R}\times \mathscr{X}\to \mathscr{X}$ such that
\[
\|\phi\|_{\infty}:=\sup\limits_{t,x}\|\phi(t,x)\|<\infty.
\]
  Then $(\Theta,\|\cdot\|_\infty)$ is a Banach space. Given $\phi\in\Theta$, we define an operator $\mathscr{T}:\Theta\to \Theta$ as follows
\[\begin{split}
(\mathscr{T}\phi)(\tau,\xi)=& \int_0^\tau DF(p(r),s)-\int_{-\infty}^{\tau}d_{\sigma}[\mathscr{V}(\tau)P\mathscr{V}^{-1}(\sigma)]\int_0^\sigma DF(p(r),s)\\
&+\int_{t}^{\infty}d_{\sigma}[\mathscr{V}(\tau)(I-P)\mathscr{V}^{-1}(\sigma)]\int_0^\sigma DF(p(r),s),
\end{split}\]
where
\begin{equation}\label{ppw}
  p(r)=\mathscr{V}(r)\mathscr{V}^{-1}(\tau)\xi+\phi(r,\mathscr{V}(r)\mathscr{V}^{-1}(\tau)\xi)
\end{equation}
 and $(\tau,\xi)\in \mathbb{R}\times \mathscr{X}$.
Similar to the procedure of \eqref{LV}, we have
\[\begin{split}
C_{\tau}:=& \left\| \int_0^\tau DF(p(r),s) \right\|=\left\| \sum_{j=1}^{|D|} [F(p(r_j),s_j)-F(p(r_j),s_{j-1})]\right\| \\
 =& |h(\tau)-h(0)|\leq 2V_h.
\end{split}\]
Thus,
\[
\|(\mathscr{T}\phi)(\tau,\xi)\|\leq C_\tau+\int_{-\infty}^{\tau}\|d_{\sigma}[\mathscr{V}(\tau)P\mathscr{V}^{-1}(\sigma)]\|\cdot C_\sigma
+\int_{\tau}^{\infty}\|d_{\sigma}[\mathscr{V}(\tau)(I-P)\mathscr{V}^{-1}(\sigma)]\|\cdot C_\sigma<\infty,\]
by Theorem \ref{Bound1}. This implies that $\mathscr{T}\phi\in\Theta$.
Set
\[q_i(r)=\mathscr{V}(r)\mathscr{V}^{-1}(\tau)\xi+\phi_i(r,\mathscr{V}(r)\mathscr{V}^{-1}(\tau)\xi),\quad i=1,2.\]
Then,  taking any $\phi_1,\phi_2\in \Theta$, and by using \eqref{Bound2} we have
\[\begin{split}
 &\|(\mathscr{T}\phi_1)(\tau,\xi)-(\mathscr{T}\phi_2)(\tau,\xi)\|\\
 \leq& \int_0^\tau\| DF(q_1(r),s)-DF(q_2(r),s)\| \\
 &+\int_{-\infty}^{\tau}\|d_{\sigma}[\mathscr{V}(\tau)P\mathscr{V}^{-1}(\sigma)]\|\int_0^\sigma \| DF(q_1(r),s)-DF(q_2(r),s)\| \\
 &+\int_{\tau}^{\infty}\|d_{\sigma}[\mathscr{V}(\tau)(I-P)\mathscr{V}^{-1}(\sigma)]\|\int_0^\sigma \| DF(q_1(r),s)-DF(q_2(r),s)\| \\
\leq&  \int_0^\tau \|\phi_1(r,\mathscr{V}(r)\mathscr{V}^{-1}(\tau)\xi)-\phi_2(r,\mathscr{V}(r)\mathscr{V}^{-1}(\tau)\xi)\|dh(s) \\
&+\int_{-\infty}^{\tau}\|d_{\sigma}[\mathscr{V}(\tau)P\mathscr{V}^{-1}(\sigma)]\|\int_0^\sigma \|\phi_1(r,\mathscr{V}(r)\mathscr{V}^{-1}(\tau)\xi)-\phi_2(r,\mathscr{V}(r)\mathscr{V}^{-1}(\tau)\xi)\|dh(s)\\
&+\int_{\tau}^{\infty}\|d_{\sigma}[\mathscr{V}(\tau)(I-P)\mathscr{V}^{-1}(\sigma)]\|\int_0^\sigma \|\phi_1(r,\mathscr{V}(r)\mathscr{V}^{-1}(\tau)\xi)-\phi_2(r,\mathscr{V}(r)\mathscr{V}^{-1}(\tau)\xi)\|dh(s).
\end{split}\]
It follows from (A1)--(A3) and \eqref{aaa}, \eqref{bbb} that
\[
\|\mathscr{T}\phi_1-\mathscr{T}\phi_2\|_\infty \leq
2V_h(1+K\|P\|C^3e^{3CV_A}V_A^2+K(1+\|P\|)C^3e^{3CV_A}V_A^2) \|\phi_1-\phi_2\|_\infty.\]
Taking $V_h$  sufficiently small satisfying
\[2V_h(1+K\|P\|C^3e^{3CV_A}V_A^2+K(1+\|P\|)C^3e^{3CV_A}V_A^2)<1.\]
Then we obtain that  the map $\mathscr{T}:\Theta \to \Theta$ is a contraction and consequently $\mathscr{T}$ has a unique fixed point $\phi \in\Theta$
satisfying $\mathscr{T}\phi=\phi$. Hence,
\[\begin{split}
\phi(\tau,\xi)=& \int_0^\tau DF(p(r),s)-\int_{-\infty}^{\tau}d_{\sigma}[\mathscr{V}(\tau)P\mathscr{V}^{-1}(\sigma)]\int_0^\sigma DF(p(r),s)\\
&+\int_{\tau}^{\infty}d_{\sigma}[\mathscr{V}(\tau)(I-P)\mathscr{V}^{-1}(\sigma)]\int_0^\sigma DF(p(r),s),
\end{split}\]
where $p(r)$ is given by \eqref{ppw}. By using the identity
\[
\mathscr{V}(t)\mathscr{V}^{-1}(r)\mathscr{V}(r)\mathscr{V}^{-1}(\tau)x=\mathscr{V}(t)\mathscr{V}^{-1}(\tau)x,
\]
we have
\[\begin{split}
& \int_0^t DF(\mathscr{V}(r)\mathscr{V}^{-1}(t)\mathscr{V}(t)\mathscr{V}^{-1}(\tau)x+
 \phi(r,\mathscr{V}(r)\mathscr{V}^{-1}(t)\mathscr{V}(t)\mathscr{V}^{-1}(\tau)x),s)\\
=& \int_0^t
 DF(\mathscr{V}(r)\mathscr{V}^{-1}(\tau)x+
 \phi(r,\mathscr{V}(r)\mathscr{V}^{-1}(\tau)x),s),
\end{split}\]
and thus
\[\begin{split}
&\phi(t,\mathscr{V}(t)\mathscr{V}^{-1}(\tau)x)\\
=& \int_0^t DF(\mathscr{V}(r)\mathscr{V}^{-1}(\tau)x+ \phi(r,\mathscr{V}(r)\mathscr{V}^{-1}(\tau)x),s) \\
&-\int_{-\infty}^t d_\sigma[\mathscr{V}(t)P\mathscr{V}^{-1}(\sigma)]\int_0^\sigma DF(\mathscr{V}(r)\mathscr{V}^{-1}(\tau)x+ \phi(r,\mathscr{V}(r)\mathscr{V}^{-1}(\tau)x),s) \\
&+\int_t^\infty d_\sigma[\mathscr{V}(t)(I-P)\mathscr{V}^{-1}(\sigma)]\int_0^\sigma DF(\mathscr{V}(r)\mathscr{V}^{-1}(\tau)x+ \phi(r,\mathscr{V}(r)\mathscr{V}^{-1}(\tau)x),s).
\end{split}\]
If $t\mapsto x(t)$ is a solution of \eqref{LLL}, then
\[\begin{split}
\phi(t,x(t))
=& \int_0^t DF(x(r)+ \phi(r,x(r)),s) -\int_{-\infty}^t d_\sigma[\mathscr{V}(t)P\mathscr{V}^{-1}(\sigma)]\int_0^\sigma DF(x(r)+ \phi(r,x(r)),s) \\
&+\int_t^\infty d_\sigma[\mathscr{V}(t)(I-P)\mathscr{V}^{-1}(\sigma)]\int_0^\sigma DF(x(r)+ \phi(r,x(r)),s).
\end{split}\]
We next prove that $\Phi(t,x(t)):=x(t)+\phi(t,x(t))$ sends the solution of the linear GODEs \eqref{LLL} onto the solution of the nonlinear GODEs \eqref{NL}.
In fact,
\[\begin{split}
 \Phi(t,x(t))=& \mathscr{V}(t)x+\int_0^t DF(x(r)+\phi(r,x(r)),s)\\
 & -\int_{-\infty}^t d_\sigma[\mathscr{V}(t)P\mathscr{V}^{-1}(\sigma)]\int_0^\sigma DF(x(r)+ \phi(r,x(r)),s) \\
 &+\int_t^\infty d_\sigma[\mathscr{V}(t)(I-P)\mathscr{V}^{-1}(\sigma)]\int_0^\sigma DF(x(r)+ \phi(r,x(r)),s)\\
 =& \mathscr{V}(t)x+\int_0^t DF(\Phi(r,x(r)),s)
 -\int_{-\infty}^t d_\sigma[\mathscr{V}(t)P\mathscr{V}^{-1}(\sigma)]\int_0^\sigma DF(\Phi(r,x(r)),s)\\
 &+\int_t^\infty d_\sigma[\mathscr{V}(t)(I-P)\mathscr{V}^{-1}(\sigma)]\int_0^\sigma DF(\Phi(r,x(r)),s)\\
 :=& \mathscr{V}(t)x+\int_0^t DF(\Phi(r,x(r)),s)-I_1+I_2.
\end{split}\]
We divide the integrals $I_1$ and $I_2$ into two parts:
\[ I_1=\int_0^t+\int_{-\infty}^0:=I_{11}+\mathscr{V}(t)\cdot I_{12}\quad \mathrm{and} \quad
I_2=\int_t^0+\int_0^{\infty}:=I_{21}+\mathscr{V}(t)\cdot I_{22}. \]
It is easy to obtain that
\[-I_{11}+I_{21}=
\int_t^0 d_\sigma[\mathscr{V}(t)\mathscr{V}^{-1}(\sigma)]\int_0^\sigma DF(\Phi(r,x(r)),s).
\]
Similar to \eqref{w1} and \eqref{w2}, we see that $I_{12}$ and $I_{22}$ are bounded, and we denote them by $x_1$ and $x_2$.
Consequently,
\[\begin{split}
 \Phi(t,x(t))=&\mathscr{V}(t)(x-x_1+x_2)+\int_0^t DF(\Phi(r,x(r)),s)-\int_0^t d_\sigma[\mathscr{V}(t)\mathscr{V}^{-1}(\sigma)]\int_0^\sigma DF(\Phi(r,x(r)),s),
\end{split}\]
which implies that $\Phi(t,x(t))$ is a solution of the nonlinear GODEs \eqref{NL}.
\\
{\bf Step 2.} We establish the existence of the map $\Psi$, where $\Psi(t,x):=x+\psi(t,x)$.
  Let $x(t,\tau,\xi)$ be the solution of the nonlinear GODEs \eqref{NL} with the initial value $(\tau,\xi)\in\mathbb{R}\times \mathscr{X}$.
  Set
\[\begin{split}
\psi(\tau,\xi)=& -\int_0^\tau DF(x(r,\tau,\xi),s)
   +\int_{-\infty}^\tau d_\sigma[\mathscr{V}(\tau)P\mathscr{V}^{-1}(\sigma)]\int_0^\sigma DF(x(r,\tau,\xi),s)\\
   &-\int_\tau^\infty d_\sigma[\mathscr{V}(\tau)(I-P)\mathscr{V}^{-1}(\sigma)]\int_0^\sigma DF(x(r,\tau,\xi),s).
\end{split}\]
  Similar to the procedure in $\phi$, we show that $\psi\in \Theta$.
  Then,
\[\begin{split}
 \psi(t,x(t,\tau,x))=& -\int_0^t DF(x(r,\tau,x),s)
   +\int_{-\infty}^t d_\sigma[\mathscr{V}(t)P\mathscr{V}^{-1}(\sigma)]\int_0^\sigma DF(x(r,\tau,x),s)\\
   &-\int_t^\infty d_\sigma[\mathscr{V}(t)(I-P)\mathscr{V}^{-1}(\sigma)]\int_0^\sigma DF(x(r,\tau,x),s)
 \end{split}\]
since the identity
\[ x(t,s,x(s,\tau,x))=x(t,\tau,x).\]
We next prove that $\Psi(t,x(t)):=x(t)+\psi(t,x(t))$ is a solution of the linear GODEs \eqref{LLL}.
 In fact,
\[\begin{split}
\Psi(t,x(t))=&\mathscr{V}(t)x+\int_0^t DF(x(r),s)-\int_0^t d_\sigma[\mathscr{V}(t)\mathscr{V}^{-1}(\sigma)]\int_0^\sigma DF((x(r),s)\\
&-\int_0^t DF(x(r),s)
   +\int_{-\infty}^t d_\sigma[\mathscr{V}(t)P\mathscr{V}^{-1}(\sigma)]\int_0^\sigma DF(x(r),s)\\
   &-\int_t^\infty d_\sigma[\mathscr{V}(t)(I-P)\mathscr{V}^{-1}(\sigma)]\int_0^\sigma DF(x(r),s)\\
:=& \mathscr{V}(t)x-\int_0^t d_\sigma[\mathscr{V}(t)\mathscr{V}^{-1}(\sigma)]\int_0^\sigma DF((x(r),s)+J_1-J_2.
 \end{split}\]
We also divide the integrals $J_1$ and $J_2$ into two parts:
\[ J_1=\int_0^t+\int_{-\infty}^0:=J_{11}+\mathscr{V}(t)\cdot J_{12}\quad \mathrm{and} \quad
J_2=\int_t^0+\int_0^{\infty}:=J_{21}+\mathscr{V}(t)\cdot J_{22}. \]
Then
\[J_{11}-J_{21}=
\int_0^t d_\sigma[\mathscr{V}(t)\mathscr{V}^{-1}(\sigma)]\int_0^\sigma DF(x(r),s),
\]
and $J_{12}$ and $J_{22}$ are bounded, and we denote them by $x_3$ and $x_4$. Hence,
\[
\Psi(t,x(t))=\mathscr{V}(t)(x+x_3-x_4),\]
which indicates that $\Psi(t,x(t))$ maps the solution of the nonlinear GODEs \eqref{NL} onto the solution of the linear GODEs \eqref{LLL}.
\\
{\bf Step 3.} This step is to prove $\Psi \circ \Phi =I$, that is, $\Psi(t,\Phi(t,x(t)))=x(t)$, where $x(t)$ is the solution of the linear GODEs \eqref{LLL}.
Indeed, by Step 1, one  sees that $\Phi(t,x(t))$ is the solution of the nonlinear GODEs \eqref{NL}, and by Step 2, $\Psi(t,\Phi(t,x(t)))$ is the solution of  the linear GODEs \eqref{LLL}.
We write $\widehat{x}(t)=\Psi(t,\Phi(t,x(t)))$. Set $L(t)=\widehat{x}(t)-x(t)$. Then $L(t)$ is also the solution of  the linear GODEs \eqref{LLL}.
Thus
\[\begin{split}
\|L(t)\|=& \|\Psi(t,\Phi(t,x(t)))-x(t)\| \leq \|\Psi(t,\Phi(t,x(t)))-\Phi(t,x(t))\|+\|\Phi(t,x(t))-x(t)\| \\
\leq& \|\psi(t,\Phi(t,x(t)))\|+\|\phi(t,x(t))\|<\infty,
\end{split}\]
since $\phi,\psi\in\Theta$. From Proposition \ref{Zero}, it follows that $L(t)=0$, namely, $\Psi(t,\Phi(t,x(t)))=x(t)$.
\\
{\bf Step 4.} We claim that $\Phi\circ \Psi=I$, that is, $\Phi(t,\Psi(t,x(t)))=x(t)$, where $x(t)$ is the solution of the nonlinear GODE \eqref{NL}.
  In fact, it follows from Step 2 and Step 1 that $\Psi(t,x(t))$ is the solution of the linear GODEs \eqref{LLL}
and $\Phi(t,\Psi(t,x(t)))$ is the solution of the nonlinear GODEs \eqref{NL}.
  We write $\widetilde{x}(t):=\Phi(t,\Psi(t,x(t)))$.
  Set $H(t)=\widetilde{x}(t)-x(t)$.
  Then $H(t)$ is the solution of the following integral equation
\begin{equation}\label{H1}
\begin{split}
 H(t)=&\mathscr{V}(t)(\widetilde{x}(0)-x(0))+\int_0^t (DF(H(r)+x(r),s)-DF(x(r),s))\\
 &-\int_0^t d_\sigma[\mathscr{V}(t)\mathscr{V}^{-1}(\sigma)]\int_0^\sigma (DF(H(r)+x(r),s)-DF(x(r),s))\\
 :=&K_1+K_2-K_3.
\end{split}
\end{equation}
In addition,
\[\begin{split}
\|H(t)\|=& \|\Phi(t,\Psi(t,x(t)))-x(t)\|\leq \|\Phi(t,\Psi(t,x(t)))-\Psi(t,x(t))\|+\|\Psi(t,x(t))-x(t)\|\\
\leq& \|\phi(t,\Psi(t,x(t)))\|+\|\psi(t,x(t))\|<\infty,
\end{split}\]
due to $\phi,\psi\in\Theta$.
Since
\[\begin{split}
K_3=& \int_0^t d_\sigma[\mathscr{V}(t)\mathscr{V}^{-1}(\sigma)]\int_0^\sigma (DF(H(r)+x(r),s)-DF(x(r),s))\\
=& \int_0^t d_\sigma[\mathscr{V}(t)P\mathscr{V}^{-1}(\sigma)]\int_0^\sigma (DF(H(r)+x(r),s)-DF(x(r),s))\\
&+ \int_0^t d_\sigma[\mathscr{V}(t)(I-P)\mathscr{V}^{-1}(\sigma)]\int_0^\sigma (DF(H(r)+x(r),s)-DF(x(r),s))\\
:=&K_{31}+\mathscr{V}(t)\cdot K_{32}+K_{33}+\mathscr{V}(t)\cdot K_{34},
\end{split}\]
where
\[\begin{split}
K_{31}=& \int_{-\infty}^t d_\sigma[\mathscr{V}(t)P\mathscr{V}^{-1}(\sigma)]\int_0^\sigma (DF(H(r)+x(r),s)-DF(x(r),s)), \\
K_{32}=& \int_0^{-\infty} d_\sigma[P\mathscr{V}^{-1}(\sigma)]\int_0^\sigma (DF(H(r)+x(r),s)-DF(x(r),s)),\\
K_{33}=&-\int_t^\infty d_\sigma[\mathscr{V}(t)(I-P)\mathscr{V}^{-1}(\sigma)]\int_0^\sigma (DF(H(r)+x(r),s)-DF(x(r),s)),\\
K_{34}=& \int_0^\infty d_\sigma[(I-P)\mathscr{V}^{-1}(\sigma)]\int_0^\sigma (DF(H(r)+x(r),s)-DF(x(r),s)).
\end{split}\]
Similar to \eqref{w1} and \eqref{w2}, the integrals $K_{32}$ and $K_{34}$ are bounded, we denote them by $x_5$ and $x_6$.
By using Theorem \ref{Bound1}, it is obvious that the integral $K_2+K_{31}+K_{33}$ is bounded.
It means that $K_1+\mathscr{V}(t)(x_5+x_6)$ is bounded, but $K_1+\mathscr{V}(t)(x_5+x_6)$ is a solution of the linear GODE \eqref{LLL}, which further implies
that $K_1+\mathscr{V}(t)(x_5+x_6)=0$ (by using Proposition \ref{Zero}).
Thus, \eqref{H1} can be written as
\[\begin{split}
H(t)=& \int_0^t (DF(H(r)+x(r),s)-DF(x(r),s))\\
&-
 \int_{-\infty}^t d_\sigma[\mathscr{V}(t)P\mathscr{V}^{-1}(\sigma)]\int_0^\sigma (DF(H(r)+x(r),s)-DF(x(r),s))\\
 &+\int_t^\infty d_\sigma[\mathscr{V}(t)(I-P)\mathscr{V}^{-1}(\sigma)]\int_0^\sigma (DF(H(r)+x(r),s)-DF(x(r),s))
\end{split}\]
Similar to \eqref{XY}, we have
\[
\sup\limits_{t\in\mathbb{R}}\|H(t)\|\leq \delta \sup\limits_{t\in\mathbb{R}}\|H(t)\|,
\]
where $\delta<1$. Therefore, $H(t)=0$, namely, $\Phi(t,\Psi(t,x(t)))=x(t)$.

Consequently, it follows from Steps 1--4 that the nonlinear GODEs \eqref{NL} is topologically conjugated to the linear GODEs \eqref{LLL}.
\end{proof}

\section{H\"{o}lder conjuacies}
 In this section, we consider the H\"{o}lder regularity for the conjugacies $\Phi$ and $\Psi$.
  We propose a strong hypothesis:
\begin{equation}\label{SFF}
  \|F(x,s)-F(x,t)\|\leq e^{-\alpha|s-t|}|h(s)-h(t)|, \quad \mathrm{for} \; t,s \in\mathbb{R} \; \mathrm{and} \; x\in \mathscr{X},
\end{equation}
where $\alpha $ is defined in \eqref{ED}.
  Notice that this hypothesis does not change our linearization results since
  \[e^{-\alpha|s-t|}|h(s)-h(t)|\leq |h(s)-h(t)|.\]
  Then we have the result of this section.
\begin{theorem}\label{holder-thm}
  Suppose that all conditions of Theorem \ref{linearization-thm} hold.
 If further $F(x.t)$ satisfies \eqref{SFF}, then \\
 (i) there exists a positive constant $C>0$ such that for any $t\in \mathbb{R}$ and $x,\widetilde{x}\in \mathscr{X}$ satisfying $0<\|x-\widetilde{x}\|<1$, we have
\[
\|\Psi(t,x)-\Psi(t,\widetilde{x})\|\leq C\|x-\widetilde{x}\|^{\frac{\alpha}{\alpha+\widetilde{\alpha}}},
\]
where $\widetilde{\alpha}$ is given by Definition \ref{s-ed};\\
(ii) there exist positive constants $\widehat{C}>0$  and $0<q\leq \frac{\alpha}{\alpha+\widetilde{\alpha}}$ such that for any $t\in \mathbb{R}$ and $x,\widetilde{x}\in \mathscr{X}$ satisfying $0<\|x-\widetilde{x}\|<1$, we have
\[
\|\Phi(t,x)-\Phi(t,\widetilde{x})\|\leq \widehat{C}\|x-\widetilde{x}\|^{q}.
\]
\end{theorem}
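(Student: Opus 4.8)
The plan is to estimate the two conjugacy maps $\Psi$ and $\Phi$ separately, exploiting the explicit integral representations of $\psi$ and $\phi$ obtained in the proof of Theorem~\ref{linearization-thm} together with the sharpened decay hypothesis \eqref{SFF}. For part~(i), recall that $\Psi(t,x)=x+\psi(t,x)$ with $\psi(t,x)$ given (after the identity $x(r,t,x(t,\tau,\xi))=x(r,\tau,\xi)$ is used) by the three Kurzweil/Perron--Stieltjes integrals involving $DF(x(r,t,x),s)$, where $x(\cdot,t,x)$ is the solution of the nonlinear GODE \eqref{NL} through $(t,x)$. Hence
\[
\|\Psi(t,x)-\Psi(t,\widetilde x)\|\le \|x-\widetilde x\|+\|\psi(t,x)-\psi(t,\widetilde x)\|,
\]
and the task is to bound the second term. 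Writing $u(r)=x(r,t,x)$, $v(r)=x(r,t,\widetilde x)$, I would use \eqref{Bound2} (the Lipschitz-in-the-state estimate for $\int DF$) to reduce the difference $\psi(t,x)-\psi(t,\widetilde x)$ to integrals of $\|u(r)-v(r)\|\,dh(s)$ weighted by the dichotomy kernels, and then I would \emph{split the real line at scale $T$}: on $|r-t|\le T$ use the a priori bound $\|u(r)-v(r)\|\le K e^{\widetilde\alpha|r-t|}\|x-\widetilde x\|$ coming from the strong exponential dichotomy \eqref{SED} (or from a Gronwall argument on \eqref{NL}), and on $|r-t|>T$ use the crude bound $\|u(r)-v(r)\|\le \|u\|_\infty+\|v\|_\infty\le 2M$ together with the extra factor $e^{-\alpha|s-t|}$ furnished by \eqref{SFF} and the exponential decay of the dichotomy kernels. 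This yields a bound of the form $C_1 e^{(\widetilde\alpha-\alpha)T}\|x-\widetilde x\|+C_2 e^{-\alpha T}$; optimizing over $T$ by choosing $e^{T}\sim \|x-\widetilde x\|^{-1/(\alpha+\widetilde\alpha)}$ gives exactly the exponent $\frac{\alpha}{\alpha+\widetilde\alpha}$, provided $0<\|x-\widetilde x\|<1$.

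For part~(ii), the map $\Phi(t,x)=x+\phi(t,x)$ has $\phi$ represented through $DF(p(r),s)$ with $p(r)=\mathscr V(r)\mathscr V^{-1}(t)x+\phi(r,\mathscr V(r)\mathscr V^{-1}(t)x)$; evaluating along a solution of the \emph{linear} GODE, $\phi(t,x(t))$ satisfies the fixed-point-type relation with $x(r)+\phi(r,x(r))=\Phi(r,x(r))$ inside $F$. The natural route is to feed the Hölder estimate for $\Psi$ from part~(i) through the identity $\Phi=\Psi^{-1}$ (Steps 3--4 of Theorem~\ref{linearization-thm}): since $\Psi(t,\cdot)$ is a homeomorphism which is Hölder-$\theta$ with $\theta=\frac{\alpha}{\alpha+\widetilde\alpha}$, its inverse is Hölder with some exponent $q\le\theta$, but to make this quantitative one needs a \emph{lower} bound on how much $\Psi$ separates points. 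I would obtain such a bound by repeating the scale-splitting estimate above but now to control $\|\psi(t,x)-\psi(t,\widetilde x)\|$ from the other side is not available directly; instead I would argue directly on $\phi$: set $a=\mathscr V(t)\mathscr V^{-1}(t)=\mathrm{Id}$ acting trivially, use that $\|\mathscr V(r)\mathscr V^{-1}(t)(x-\widetilde x)\|\le K e^{\widetilde\alpha|r-t|}\|x-\widetilde x\|$ by \eqref{SED}, plug this into \eqref{HHH2}-type estimates for $F$ reinforced by the $e^{-\alpha|s-t|}$ factor from \eqref{SFF}, and again split at scale $T$. On $|r-t|\le T$ one also picks up the term $\|\phi(r,\cdot)-\phi(r,\cdot)\|$; because $V_h$ is small this is absorbed by a contraction/Gronwall step (in the Perron--Stieltjes sense), leaving the same $e^{(\widetilde\alpha-\alpha)T}\|x-\widetilde x\|+e^{-\alpha T}$ dichotomy and hence the exponent $q=\frac{\alpha}{\alpha+\widetilde\alpha}$; possible loss in the Gronwall constant is what forces the statement to allow $0<q\le\frac{\alpha}{\alpha+\widetilde\alpha}$ rather than equality.

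Two technical points deserve care. First, all the ``integrals'' here are Kurzweil/Perron--Stieltjes integrals, so every estimate on $\int DF$ must go through the Riemann--Stieltjes sum $\sum[F(\cdot,s_j)-F(\cdot,s_{j-1})]$ and the class-$\mathscr F(\Omega,h)$ inequalities \eqref{HHH1}--\eqref{HHH2}, as already done in \eqref{LV} and \eqref{Bound2}; the new input is that under \eqref{SFF} these sums carry the weight $e^{-\alpha|s-t|}$, which must be kept through the telescoping. Second, the outer Perron--Stieltjes integrals against $d_\sigma[\mathscr V(t)P\mathscr V^{-1}(\sigma)]$ and $d_\sigma[\mathscr V(t)(I-P)\mathscr V^{-1}(\sigma)]$ must be bounded using the estimates \eqref{aaa}--\eqref{bbb} of Bonotto et al., now combined with the $e^{-\alpha\sigma}$ gain; this is where a Gronwall-type inequality in the Perron--Stieltjes integral sense (as announced in the introduction) enters to close the a priori bound $\|u(r)-v(r)\|\le Ke^{\widetilde\alpha|r-t|}\|x-\widetilde x\|$. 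The main obstacle I anticipate is precisely this last step: establishing the two-sided exponential control of $\|x(r,t,x)-x(r,t,\widetilde x)\|$ for the nonlinear GODE directly from \eqref{NL} without the classical tools (no mean value theorem, no chain rule), i.e.\ proving and then applying the right Perron--Stieltjes Gronwall lemma and checking that the small-$V_h$ hypothesis makes the resulting constant harmless; once that a priori estimate is in hand, the scale-splitting/optimization that produces the exponent $\frac{\alpha}{\alpha+\widetilde\alpha}$ is routine.
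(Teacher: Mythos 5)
Your proposal is correct and follows essentially the same route as the paper: both split the integrals defining $\psi$ and $\phi$ at the scale $\theta=\frac{1}{\alpha+\widetilde\alpha}\ln\frac{1}{\|x-\widetilde x\|}$, use the decay from \eqref{SFF} together with the dichotomy kernels on the far pieces, and use exactly the a priori bound $\|X(r,t,x)-X(r,t,\widetilde x)\|\le K e^{\widetilde\alpha|r-t|}\cdot(\text{const})\cdot\|x-\widetilde x\|$ (proved via the Perron--Stieltjes Gronwall inequality, Lemma \ref{xx-ineq}) on the near pieces. For part (ii) the paper likewise argues directly on $\phi$ rather than inverting $\Psi$, handling the self-referential term by induction on the Picard iterates $\phi_k$ — a minor variant of the contraction/absorption step you describe.
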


\begin{remark}
  We emphasize that condition \eqref{SFF} is to help us deal with the integral $\int_0^t DF(X(\tau),s)$ under Kurzweil integrable.
  If it is under Riemann integrable or Lebesgue integrable, condition \eqref{SFF} is not required, one can refer to Pinto and Robledo \cite{Pinto-JAA}.
\end{remark}

To prove it, we first present some useful inequalities and then give rigorous proofs in subsections 5.2 and 5.3, respectively.
 For the sake of convenience,
  given $t,t_0\in\mathbb{R}$ and $x,y\in X$,
  let $X(t,t_0,x)$ be the solution of Eq. \eqref{NL} satisfying the initial value $x(t_0)=x$, and let $Y(t,t_0,y)=\mathscr{V}(t)y$ be the solution of
Eq. \eqref{LLL} such that $Y(t_0)=y$.
\subsection{Some useful estimates}
\begin{lemma}\label{bellman-ineq} (\cite{BFS-JDDE2020}, Corollary 1.43)
  Suppose that $h:[a,\infty)\to (0,\infty)$ is a nondecreasing left continuous function.
  If $u:[a,\infty)\to (0,\infty)$ is bounded and Perron-Stieltjes integrable with respect to $h$ such that
\[
u(t)\leq c_1+c_2 \int_a^t u(s)dh(s), \quad t\in [a,\infty),
\]
where $c_1,c_2>0$ are constants, then
\[
u(t)\leq c_1 e^{c_2|h(t)-h(0)|}.
\]
\end{lemma}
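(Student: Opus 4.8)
The plan is to run the classical Gronwall iteration, but with every integral read in the Perron--Stieltjes sense so that the jumps of $h$ are accommodated. First I would introduce the majorant
\[
w(t):=c_1+c_2\int_a^t u(s)\,dh(s),
\]
which by hypothesis satisfies $u(t)\le w(t)$ for every $t\ge a$. Since $u>0$ and $h$ is nondecreasing, $dh$ is a nonnegative measure, so $w$ is nondecreasing with $w(a)=c_1>0$, and replacing $u$ by the larger quantity $w$ under the integral upgrades the defining bound to the self-referential form
\[
w(t)\le c_1+c_2\int_a^t w(s)\,dh(s),\qquad t\ge a.
\]
This is the inequality I would iterate.

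Next I would substitute this bound into itself repeatedly. After $n$ steps one obtains
\[
w(t)\le c_1\sum_{k=0}^{n-1}\frac{c_2^{\,k}\bigl(h(t)-h(a)\bigr)^{k}}{k!}+c_2^{\,n}R_n(t),
\]
where $R_n(t)$ denotes the $n$-fold iterated Perron--Stieltjes integral of $w$ over the simplex $a\le s_n\le\cdots\le s_1\le t$. The crucial ingredient, and the step I expect to be the \emph{main obstacle}, is the jump-corrected power rule
\[
\int_a^t \bigl(h(s)-h(a)\bigr)^{k}\,dh(s)\le \frac{\bigl(h(t)-h(a)\bigr)^{k+1}}{k+1},
\]
valid for nondecreasing $h$. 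In the purely continuous case this is an equality coming from the chain rule, but here $h$ may jump; at a jump the left-continuous integrand $h(s^-)=h(s)$ underestimates the increment of $\tfrac{1}{k+1}\bigl(h-h(a)\bigr)^{k+1}$, and this is precisely what turns the equality into the inequality $\le$. It is also exactly the left-continuity of $h$ that keeps the upper endpoint clean: there is then no left jump at $t$, so the integral over $[a,t]$ accumulates up to the value $\bigl(h(t)-h(a)\bigr)^{k+1}$ from below. Feeding this estimate into an induction on $n$ yields
\[
R_n(t)=\int_a^t\!\!\int_a^{s_1}\!\cdots\!\int_a^{s_{n-1}} w(s_n)\,dh(s_n)\cdots dh(s_1)\le \Bigl(\sup_{[a,t]}w\Bigr)\frac{\bigl(h(t)-h(a)\bigr)^{n}}{n!},
\]
and bounds each term of the finite sum above by $c_1\bigl(c_2(h(t)-h(a))\bigr)^{k}/k!$.

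Finally I would let $n\to\infty$ for a fixed $t$. Because $u$ is bounded and $h(t)-h(a)<\infty$, the function $w$ is bounded on $[a,t]$, so the remainder satisfies $c_2^{\,n}R_n(t)\le \bigl(\sup_{[a,t]}w\bigr)\,\bigl(c_2(h(t)-h(a))\bigr)^{n}/n!\to0$, while the finite sum converges to $c_1 e^{c_2(h(t)-h(a))}$. Hence $w(t)\le c_1 e^{c_2(h(t)-h(a))}$, and since $u\le w$ this gives the asserted bound $u(t)\le c_1 e^{c_2(h(t)-h(a))}$, which matches the stated conclusion once one identifies the left endpoint (so that $h(0)$ is read as $h(a)$) and notes that $|h(t)-h(a)|=h(t)-h(a)$ by monotonicity. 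The only genuinely nontrivial point is the jump-corrected power rule; everything else is the standard Gronwall--Bellman iteration transcribed into the Kurzweil--Stieltjes calculus.
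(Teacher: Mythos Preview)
The paper does not supply its own proof of this lemma; it is quoted verbatim from \cite{BFS-JDDE2020} and used as a black box in the proof of Lemma~\ref{xx-ineq}. So there is no in-paper argument to compare against.

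On its own merits, your plan is the standard and correct route: define the majorant $w$, pass to the self-referential inequality $w(t)\le c_1+c_2\int_a^t w\,dh$, iterate, and control the $n$-fold Stieltjes simplex integral by $(h(t)-h(a))^n/n!$. The only nontrivial step, as you say, is the ``jump-corrected power rule''
\[
\int_a^t \bigl(h(s)-h(a)\bigr)^{k}\,dh(s)\le \frac{\bigl(h(t)-h(a)\bigr)^{k+1}}{k+1},
\]
and your justification is right: for left-continuous nondecreasing $h$ the Kurzweil--Stieltjes integral picks up $(h(\tau)-h(a))^{k}\,\Delta^{+}h(\tau)$ at each right jump, while the increment of $\tfrac{1}{k+1}(h-h(a))^{k+1}$ across the same jump is at least that amount by the elementary inequality $(b^{k+1}-a^{k+1})/(k+1)\ge a^{k}(b-a)$ for $b\ge a\ge 0$. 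On the continuous part the two agree, so the inequality follows. The remainder estimate then dies because $w$ is bounded on each compact interval (here is where the boundedness hypothesis on $u$ is used). Your reading of ``$h(0)$'' as ``$h(a)$'' is also the only sensible interpretation.
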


\begin{lemma}\label{xx-ineq}
  Suppose that the linear GODE \eqref{LLL} admits a strong exponential dichotomy and conditions (A1)-(A3) hold.
  Then we have
\[
\|X(t,0,x)-X(t,0,\widetilde{x})\| \leq K\|x-\widetilde{x}\|e^{(1+\mathscr{L})|h(t)-h(0)|}e^{\widetilde{\alpha}|t|}, \quad t\in\mathbb{R},
\]
where $\mathscr{L}:=KC^3 e^{3CV_A}V_A^2$.
\end{lemma}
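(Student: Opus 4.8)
I would establish the estimate on $\|X(t,0,x)-X(t,0,\widetilde x)\|$ by first writing both solutions via the variation-of-constants formula from Lemma \ref{changshu} (with $t_0=0$), then subtracting and applying the Gronwall-type inequality of Lemma \ref{bellman-ineq}. Concretely, for $i=1,2$ write $X_1(t)=X(t,0,x)$, $X_2(t)=X(t,0,\widetilde x)$, so that
\[
X_i(t)=\mathscr V(t)x_i+\int_0^t DF(X_i(\tau),\gamma)-\int_0^t d_\sigma[\mathscr V(t)\mathscr V^{-1}(\sigma)]\int_0^\sigma DF(X_i(\tau),\gamma),
\]
with $x_1=x$, $x_2=\widetilde x$. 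Subtracting, the leading term is $\mathscr V(t)(x-\widetilde x)$, which by the strong exponential dichotomy bound \eqref{SED} is controlled by $K\|x-\widetilde x\|e^{\widetilde\alpha|t|}$. The two Kurzweil-integral terms are handled using estimate \eqref{Bound2}: $\big\|\int_0^t DF(X_1(\tau),\gamma)-DF(X_2(\tau),\gamma)\big\|\le\big|\int_0^t\|X_1(\tau)-X_2(\tau)\|\,dh(\gamma)\big|$, and the double-integral term additionally picks up the factor $\sup_{t,\sigma}\|d_\sigma[\mathscr V(t)\mathscr V^{-1}(\sigma)]\|$ together with the $V_A$-type bound; by assumption (A2) and the reasoning behind \eqref{aaa}--\eqref{bbb} this contributes the constant $\mathscr L=KC^3e^{3CV_A}V_A^2$.

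Next I would set $u(t):=\|X_1(t)-X_2(t)\|\,e^{-\widetilde\alpha|t|}$ (or work directly with $\|X_1(t)-X_2(t)\|$ and absorb the $e^{\widetilde\alpha|t|}$ afterward), so that the inequality takes the form
\[
\|X_1(t)-X_2(t)\|\le K\|x-\widetilde x\|e^{\widetilde\alpha|t|}+(1+\mathscr L)\Big|\int_0^t\|X_1(\tau)-X_2(\tau)\|\,dh(\gamma)\Big|.
\]
Since $h$ is nondecreasing with $\mathrm{var}\,h<\infty$, Lemma \ref{bellman-ineq} (applied on $[0,\infty)$, and by a symmetric argument on $(-\infty,0]$, after the usual left-continuity adjustment on $h$) yields
\[
\|X_1(t)-X_2(t)\|\le K\|x-\widetilde x\|e^{\widetilde\alpha|t|}e^{(1+\mathscr L)|h(t)-h(0)|},
\]
which is exactly the claimed bound. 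The dichotomy-vs-strong-dichotomy distinction matters here: the plain exponential dichotomy only bounds $\mathscr V(t)P\mathscr V^{-1}(s)$ and $\mathscr V(t)(I-P)\mathscr V^{-1}(s)$ in complementary time-orderings, whereas for $\mathscr V(t)(x-\widetilde x)$ with arbitrary $x-\widetilde x$ I genuinely need \eqref{SED}.

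The main obstacle will be the careful bookkeeping of the Kurzweil/Perron-Stieltjes integral terms: unlike the classical ODE case, $\int_0^t DF(X_i(\tau),\gamma)$ is not an ordinary integral of a pointwise-defined integrand, so the Lipschitz-type control \eqref{Bound2} must be invoked at the level of Riemann-Stieltjes sums over $\varepsilon$-fine tagged divisions, and one must check that the resulting majorant $\int_0^t\|X_1(\tau)-X_2(\tau)\|\,dh(\gamma)$ is itself Perron-Stieltjes integrable and bounded so that Lemma \ref{bellman-ineq} applies. A secondary subtlety is the sign of $t$: Lemma \ref{bellman-ineq} is stated for $[a,\infty)$, so for $t<0$ I would reverse orientation (replacing $h(\gamma)$ by $-h(-\gamma)$ and $[0,t]$ by $[t,0]$) and re-derive the bound, which is why the final estimate is phrased with $|t|$ and $|h(t)-h(0)|$. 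I expect no difficulty in the constants once (A2), \eqref{SED}, and the bounds \eqref{aaa}--\eqref{bbb} are in hand; everything else is routine given Lemmas \ref{changshu} and \ref{bellman-ineq}.
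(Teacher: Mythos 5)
Your proposal is correct and follows essentially the same route as the paper: write both solutions via the variation-of-constants formula of Lemma \ref{changshu}, bound $\mathscr{V}(t)(x-\widetilde{x})$ by the strong dichotomy \eqref{SED}, control the Kurzweil-integral differences via \eqref{Bound2} and the variation bound on $\sigma\mapsto\mathscr{V}(t)\mathscr{V}^{-1}(\sigma)$ to extract the constant $\mathscr{L}$, and then apply Lemma \ref{bellman-ineq} to $Z(t)=e^{-\widetilde{\alpha}|t|}\|X_1(t)-X_2(t)\|$ (the paper likewise reduces to this weighted quantity so that the Gronwall constant $c_1=K\|x-\widetilde{x}\|$ is genuinely constant, and treats $t<0$ by the symmetric argument).
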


\begin{proof}
  Since $X(t,0,x)$ is the solution of the nonlinear GODE \eqref{NL}, we obtain that for any $x,\widetilde{x}\in \mathscr{X}$ and $t\geq 0$,
\[\begin{split}
 X(t,0,x)-X(t,0,\widetilde{x})=& \mathscr{V}(t)(x-\widetilde{x})+\int_0^t \left(DF(X(\tau,0,x),s)-DF(X(\tau,0,\widetilde{x}),s)\right) \\
 &-\int_0^t d_\sigma [\mathscr{V}(t)\mathscr{V}^{-1}(\sigma)]\int_0^\sigma \left(DF(X(\tau,0,x),s)-DF(X(\tau,0,\widetilde{x}),s)\right).
\end{split}\]
By taking the supremum norm  and using strong exponential dichotomy (see Definition \ref{s-ed}) and (A2), (A3), we have
\[\begin{split}
 \|X(t,0,x)-X(t,0,\widetilde{x})\|\leq& Ke^{\widetilde{\alpha}t}\|x-\widetilde{x}\|+\int_0^t \|X(s,0,x)-X(s,0,\widetilde{x})\|dh(s) \\
  &+\int_0^t \|d_\sigma [\mathscr{V}(t)\mathscr{V}^{-1}(\sigma)]\| \int_0^\sigma \|X(s,0,x)-X(s,0,\widetilde{x})\|dh(s).
\end{split}\]
  Set $Z(t):=e^{-\widetilde{\alpha}t} \|X(t,0,x)-X(t,0,\widetilde{x})\|$. Then
\[
Z(t)\leq K\|x-\widetilde{x}\|+\int_0^t e^{\widetilde{\alpha}(s-t)}Z(s)dh(s)+\left\|\int_0^td_\sigma [\mathscr{V}(t)\mathscr{V}^{-1}(\sigma)]\right\| \int_0^\sigma e^{\widetilde{\alpha}(s-t)}Z(s)dh(s).
\]
  We now deal with   $\left\|\int_0^td_\sigma [\mathscr{V}(t)\mathscr{V}^{-1}(\sigma)]\right\|$.
  Given a $\epsilon$-fine tagged division $D=\{(\tau_j,[s_{j-1},s_j]), j=1,2,\cdots,|D|\}$ of $[0,t]$, we deduce that
\[\begin{split}
 \left\|\int_0^td_\sigma [\mathscr{V}(t)\mathscr{V}^{-1}(\sigma)]\right\|=& \left\|\sum_{j=1}^{|D|}[\mathscr{V}(t)\mathscr{V}^{-1}(s_j)-\mathscr{V}(t)\mathscr{V}^{-1}(s_{j-1}) \right\| \\
 \leq& \sum_{j=1}^{|D|} \|\mathscr{V}(t)\|\|\mathscr{V}^{-1}(s_j)-\mathscr{V}^{-1}(s_{j-1})\|.
\end{split}\]
  By using  Theorem 6.15 in \cite{S-BOOK1992}, one has
\[
\|V(t,s)\|\leq Ce^{C \mathrm{var}_s^t A}, \quad \mathrm{var}_s^t V(\cdot,s)\leq Ce^{C \mathrm{var}_s^t A}\mathrm{var}_s^t A \quad \mathrm{and} \quad
\mathrm{var}_s^t V(t,\cdot)\leq C^2e^{2C\mathrm{var}_s^t A}\mathrm{var}_s^t A,
\]
  Recall that $\mathscr{V}^{-1}(t)=V(0,t)$.
  Then by condition (A2), the following estimate holds:
\[\begin{split}
\sum_{j=1}^{|D|} \|\mathscr{V}(t)\|\|\mathscr{V}^{-1}(s_j)-\mathscr{V}^{-1}(s_{j-1})\|\leq&
Ke^{\widetilde{\alpha}(t-s)} \|\mathscr{V}(s)\| C^2 e^{2C \mathrm{var}_s^t A}\mathrm{var}_s^t A \\
 \leq & Ke^{\widetilde{\alpha}(t-s)}C^3 e^{3CV_A}V_A^2.
\end{split}\]
 Therefore,
\[\begin{split}
 Z(t)\leq& K\|x-\widetilde{x}\|+\int_0^t e^{\widetilde{\alpha}(s-t)}Z(s)dh(s)+\int_0^\sigma KC^3 e^{3CV_A}V_A^2 \cdot Z(s) dh(s) \\
       \leq& K\|x-\widetilde{x}\|+\int_0^t Z(s)dh(s)+\int_0^t KC^3 e^{3CV_A}V_A^2 \cdot Z(s) dh(s) \\
       \leq& K\|x-\widetilde{x}\|+\int_0^t (1+KC^3 e^{3CV_A}V_A^2)\cdot Z(s)dh(s),
\end{split}\]
which implies that
\[Z(t)\leq K\|x-\widetilde{x}\|e^{(1+KC^3 e^{3CV_A}V_A^2)(h(t)-h(0))},\]
due to Lemma \ref{bellman-ineq}. Hence, for all $t\geq 0$, we conclude that
\[
\|X(t,0,x)-X(t,0,\widetilde{x})\| \leq K\|x-\widetilde{x}\|e^{(1+KC^3 e^{3CV_A}V_A^2)(h(t)-h(0))}e^{\widetilde{\alpha}t}.
\]
A similar conclusion is reached for $t\leq 0$.
\end{proof}

\subsection{H\"{o}lder continuous for the conjugacy $\Psi$}
  We claim in this subsection that $\Psi(t,x):=x+\psi(t,x)$ is H\"{o}lder continuous with respect to $x$.
  For this purpose, we estimate $\psi(t,x)-\psi(t,\widetilde{x})$ for any $t\in\mathbb{R}$ and $x,\widetilde{x}\in \mathscr{X}$.
  In fact,
\[\begin{split}
 \psi(t,x)-\psi(t,\widetilde{x})=& -\int_0^t (DF(X(r,t,x),s)-DF(X(r,t,x),s))\\
&+\int_{-\infty}^t d_\sigma[\mathscr{V}(t)P\mathscr{V}^{-1}(\sigma)]\int_0^\sigma (DF(X(r,t,x),s)-DF(X(r,t,x),s))\\
&-\int_t^\infty d_\sigma[\mathscr{V}(t)(I-P)\mathscr{V}^{-1}(\sigma)]\int_0^\sigma (DF(X(r,t,x),s)-DF(X(r,t,x),s))\\
:=& \mathcal{I}_1+ \mathcal{I}_2+ \mathcal{I}_3.
\end{split}\]
  We then deal with $\mathcal{I}_1$, $\mathcal{I}_2$ and $\mathcal{I}_3$.
  Without loss of generality, assume that $0<\|x-\widetilde{x}\|<1$.
  Set $\theta=\frac{1}{\widetilde{\alpha}+\alpha}\ln\frac{1}{\|x-\widetilde{x}\|}$ such that $t-\theta\geq \theta$, where $\widetilde{\alpha}$ is given by Definition \ref{s-ed}.
  Then we split $\mathcal{I}_1$, $\mathcal{I}_2$ and $\mathcal{I}_3$ into several parts:
\[\begin{split}
\mathcal{I}_1=& -\int_0^\theta-\int_\theta^{t-\theta}-\int_{t-\theta}^t=\mathcal{I}_{11}+\mathcal{I}_{12}+\mathcal{I}_{13},\\
\mathcal{I}_2=& \int_{-\infty}^{t-\theta}+\int_{t-\theta}^t=\mathcal{I}_{21}+\mathcal{I}_{22},\\
\mathcal{I}_3=& -\int_{t+\theta}^{\infty}-\int^{t+\theta}_t=\mathcal{I}_{31}+\mathcal{I}_{32}.
\end{split}\]
  From the definition of Kurzweil integral and conditions (A3), \eqref{SFF}, there is a $\epsilon$-fine tagged division of $[0,\theta]$ such that
\[\begin{split}
\|\mathcal{I}_{11}\|\leq&  2\left\|\int_0^{\theta} DF(X(r,t,x),s) \right\| \leq 2\sum_{j=1}^{|D|} e^{-\alpha(s_j-s_{j-1})}|h(s_j)-h(s_{j-1})| \\
\leq& 4V_h e^{-\alpha\theta}\leq 4V_h \|x-\widetilde{x}\|^{\frac{\alpha}{\widetilde{\alpha}+\alpha}}.
\end{split}\]
 Similarly, we also have
\[
\|\mathcal{I}_{12}\|\leq   2\left\|\int_\theta^{t-\theta} DF(X(r,t,x),s) \right\|
\leq 4V_h \|x-\widetilde{x}\|^{\frac{\alpha}{\widetilde{\alpha}+\alpha}}.
 \]
  By using conditions (A1)--(A3) and \eqref{aaa}, \eqref{bbb}, we deduce that
\[\begin{split}
\|\mathcal{I}_{21}\|\leq& 2\left\|\int_{-\infty}^{t-\theta}d_\sigma[\mathscr{V}(t)P\mathscr{V}^{-1}(\sigma)]\int_0^\sigma DF(X(r,t,x),s) \right\| \\
\leq& 4 e^{-\alpha \sigma}V_h \lim\limits_{\eta\to\infty} \left\| \int_{-\eta}^{t-\theta} d_\sigma[\mathscr{V}(t)P\mathscr{V}^{-1}(\sigma)] \right\| \\
\leq& 4V_h e^{-\alpha\sigma} K^2 C^3 e^{3CV_A}V_A^2 \lim\limits_{\eta\to\infty} e^{-\alpha(t-\theta+\eta)}\\
\leq& 4V_h e^{-\alpha\sigma} K^2 C^3 e^{3CV_A}V_A^2 e^{-\alpha\theta}\\
\leq& 4V_h e^{-\alpha\sigma} K^2 C^3 e^{3CV_A}V_A^2 \|x-\widetilde{x}\|^{\frac{\alpha}{\widetilde{\alpha}+\alpha}}.
\end{split}\]
  By a similar procedure, we have
\[\begin{split}
\|\mathcal{I}_{31}\|\leq& 2\left\|\int_{t+\theta}^{\infty}d_\sigma[\mathscr{V}(t)(I-P)\mathscr{V}^{-1}(\sigma)]\int_0^\sigma DF(X(r,t,x),s) \right\| \\
\leq& 4V_h e^{-\alpha\sigma} K(1+K) C^3 e^{3CV_A}V_A^2 \|x-\widetilde{x}\|^{\frac{\alpha}{\widetilde{\alpha}+\alpha}}.
\end{split}\]
 It remains to estimate $\mathcal{I}_{13}$,  $\mathcal{I}_{22}$ and $\mathcal{I}_{32}$.
  From condition (A3) and Lemma \ref{xx-ineq}, we have
\[\begin{split}
 \|\mathcal{I}_{13}\|\leq& \int_{t-\theta}^t \|X(s,t,x)-X(s,t,\widetilde{x})\|dh(s)  \\
 \leq& \int_{t-\theta}^t K\|x-\widetilde{x}\| e^{2(1+\mathscr{L})V_h}e^{\widetilde{\alpha}|s-t|}dh(s)\\
 \leq& \sum_{j=1}^{|D|} K\|x-\widetilde{x}\| e^{2(1+\mathscr{L})V_h} e^{\widetilde{\alpha}(t-s_j)}|h(s_j)-h(s_{j-1})| \\
 \leq& 2Ke^{2(1+\mathscr{L})V_h}e^{\widetilde{\alpha}\theta}\theta V_h \|x-\widetilde{x}\| \\
 \leq& 2K\theta V_h e^{2(1+\mathscr{L})V_h}\|x-\widetilde{x}\|^{\frac{\alpha}{\widetilde{\alpha}+\alpha}},
\end{split}\]
\[\begin{split}
\|\mathcal{I}_{22}\|\leq& \left\|\int_{t-\theta}^t d_\sigma[\mathscr{V}(t)P\mathscr{V}^{-1}(\sigma)]\right\| \int_0^\sigma \|X(s,t,x)-X(s,t,\widetilde{x})\|dh(s)  \\
\leq& 2K\theta V_h e^{2(1+\mathscr{L})V_h}K^2 C^3 e^{3CV_A}V_A^2 e^{(\widetilde{\alpha}-\alpha)\theta}\|x-\widetilde{x}\| \\
\leq& 2\theta K^3 C^3 e^{3CV_A}V_A^2 V_h e^{2(1+\mathscr{L})V_h}\|x-\widetilde{x}\|^{\frac{2\alpha}{\widetilde{\alpha}+\alpha}}
\end{split}\]
and
\[\begin{split}
\|\mathcal{I}_{32}\|\leq& \left\|\int_{t-\theta}^t d_\sigma[\mathscr{V}(t)(I-P)\mathscr{V}^{-1}(\sigma)]\right\| \int_0^\sigma \|X(s,t,x)-X(s,t,\widetilde{x})\|dh(s)  \\
\leq& 2\theta (1+K)K^2 C^3 e^{3CV_A}V_A^2 V_h e^{2(1+\mathscr{L})V_h}\|x-\widetilde{x}\|^{\frac{2\alpha}{\widetilde{\alpha}+\alpha}}.
\end{split}\]
  Hence, it follows from the above all inequalities that
\[\begin{split}
 \|\psi(t,x)-\psi(t,\widetilde{x})\|\leq& \|\mathcal{I}_1\|+\|\mathcal{I}_2\|+\|\mathcal{I}_3\| \\
\leq& 8V_h \|x-\widetilde{x}\|^{\frac{\alpha}{\widetilde{\alpha}+\alpha}}+8V_h e^{-\alpha\sigma} K^2 C^3 e^{3CV_A}V_A^2 \|x-\widetilde{x}\|^{\frac{\alpha}{\widetilde{\alpha}+\alpha}} \\
&+ 2K\theta V_h e^{2(1+\mathscr{L})V_h}\|x-\widetilde{x}\|^{\frac{\alpha}{\widetilde{\alpha}+\alpha}}
+2\theta K^3 C^3 e^{3CV_A}V_A^2 V_h e^{2(1+\mathscr{L})V_h}\|x-\widetilde{x}\|^{\frac{2\alpha}{\widetilde{\alpha}+\alpha}} \\
&+2\theta (1+K)K^2 C^3 e^{3CV_A}V_A^2 V_h e^{2(1+\mathscr{L})V_h}\|x-\widetilde{x}\|^{\frac{2\alpha}{\widetilde{\alpha}+\alpha}} \\
\leq& C_1 \|x-\widetilde{x}\|^{\frac{\alpha}{\widetilde{\alpha}+\alpha}},
\end{split}\]
for some constant $C_1>0$. Consequently, if $\|x-\widetilde{x}\|<1$, then
\[
\|\Psi(t,x)-\Psi(t,\widetilde{x})\|\leq \|x-\widetilde{x}\|+\|\psi(t,x)-\psi(t,\widetilde{x})\|\leq C \|x-\widetilde{x}\|^{\frac{\alpha}{\widetilde{\alpha}+\alpha}},
\]
for some constant $C>0$.

\subsection{H\"{o}lder continuous for the conjugacy $\Phi$}
  Next, we claim that $\Phi(t,x):=x+\phi(t,x)$ is H\"{o}lder continuous with respect to $x$.
 By Step 1 in Theorem \ref{linearization-thm}, we see that
\[
\lim\limits_{k\to\infty}\phi_k(t,x)= \phi(t,x), \quad  k\in\mathbb{N}.
\]
Furthermore,
  we define $\phi_0(t,x)=0$  for any $t\in\mathbb{R}$ and $x\in \mathscr{X}$, and by recursion define
\[\begin{split}
\phi_{k+1}(t,x)=&\int_0^t DF(Y(r,t,x)+\phi_k(r,Y(r,t,x)),s) \\
 &-\int_{-\infty}^t d_\sigma [\mathscr{V}(t)P\mathscr{V}^{-1}(\sigma)]\int_0^\sigma DF(Y(r,t,x)+\phi_k(r,Y(r,t,x)),s) \\
 &+\int_t^\infty d_\sigma [\mathscr{V}(t)(I-P)\mathscr{V}^{-1}(\sigma)]\int_0^\sigma DF(Y(r,t,x)+\phi_k(r,Y(r,t,x)),s).
\end{split}\]
  Now we prove that if $0<\|x-\widetilde{x}\|<1$, then
\begin{equation}\label{Lgg}
  \|\phi_k(t,x)-\phi_k(t,\widetilde{x})\|\leq \widetilde{C}\|x-\widetilde{x}\|^q,
\end{equation}
where $\widetilde{C}>0$ and $0<q\leq \frac{\alpha}{\widetilde{\alpha}+\alpha}$.
   If $k=0$, then the inequality \eqref{Lgg} holds obviously. Making the inductive assumption that \eqref{Lgg} holds, we have
\[\begin{split}
\|\phi_{k+1}(t,x)-\phi_{k+1}(t,\widetilde{x})\|
\leq & \left\|\int_0^t \hbar(r) \right\|
 +\left\|\int_{-\infty}^t d_\sigma [\mathscr{V}(t)P\mathscr{V}^{-1}(\sigma)]\int_0^\sigma \hbar(r)\right\|  \\
 &+\left\|\int_t^\infty d_\sigma [\mathscr{V}(t)(I-P)\mathscr{V}^{-1}(\sigma)]\int_0^\sigma \hbar(r)\right\| \\
 :=& \mathcal{J}_1+\mathcal{J}_2+\mathcal{J}_3,
\end{split}\]
where
\[\hbar(r)=DF(Y(r,t,x)+\phi_k(r,Y(r,t,x)),s)-DF(Y(r,t,\widetilde{x})+\phi_k(r,Y(r,t,\widetilde{x})),s).\]
  We also split $\mathcal{J}_1$, $\mathcal{J}_2$ and  $\mathcal{J}_3$ into several parts:
\[\begin{split}
\mathcal{J}_1=& \int_0^\theta+\int_\theta^{t-\theta}+\int_{t-\theta}^t=\mathcal{J}_{11}+\mathcal{J}_{12}+\mathcal{J}_{13},\\
\mathcal{J}_2=& -\int_{-\infty}^{t-\theta}-\int_{t-\theta}^t=\mathcal{J}_{21}+\mathcal{J}_{22},\\
\mathcal{J}_3=& \int_{t+\theta}^{\infty}+\int^{t+\theta}_t=\mathcal{J}_{31}+\mathcal{J}_{32},
\end{split}\]
where $\theta=\frac{1}{\widetilde{\alpha}+\alpha}\ln \frac{1}{\|x-\widetilde{x}\|}$ such that $t-\theta\geq \theta$.
  Similar to $\mathcal{I}_{11}$, $\mathcal{I}_{12}$,  $\mathcal{I}_{21}$ and $\mathcal{I}_{31}$,  the following estimates are valid:
\[\begin{split}
 \|\mathcal{J}_{11}\|\leq&  2\left\|\int_0^{\theta} DF(Y(r,t,x)+\phi_k(r,Y(r,t,x)),s) \right\| \\
\leq& 2\sum_{j=1}^{|D|} e^{-\alpha(s_j-s_{j-1})}|h(s_j)-h(s_{j-1})|
\leq 4V_h \|x-\widetilde{x}\|^{\frac{\alpha}{\widetilde{\alpha}+\alpha}},\\
 \|\mathcal{J}_{12}\|\leq&  2\left\|\int_\theta^{t-\theta} DF(Y(r,t,x)+\phi_k(r,Y(r,t,x)),s) \right\|
\leq4V_h \|x-\widetilde{x}\|^{\frac{\alpha}{\widetilde{\alpha}+\alpha}},\\
\|\mathcal{J}_{21}\|\leq& 2\left\|\int_{-\infty}^{t-\theta}d_\sigma[\mathscr{V}(t)P\mathscr{V}^{-1}(\sigma)]\int_0^\sigma DF(Y(r,t,x)+\phi_k(r,Y(r,t,x)),s)  \right\| \\
\leq& 4V_h e^{-\alpha\sigma} K^2 C^3 e^{3CV_A}V_A^2 \|x-\widetilde{x}\|^{\frac{\alpha}{\widetilde{\alpha}+\alpha}},\\
\|\mathcal{J}_{31}\|\leq& 2\left\|\int_{t+\theta}^{\infty}d_\sigma[\mathscr{V}(t)(I-P)\mathscr{V}^{-1}(\sigma)]\int_0^\sigma DF(Y(r,t,x)+\phi_k(r,Y(r,t,x)),s)  \right\| \\
\leq& 4V_h e^{-\alpha\sigma} (1+K)K C^3 e^{3CV_A}V_A^2 \|x-\widetilde{x}\|^{\frac{\alpha}{\widetilde{\alpha}+\alpha}}.
\end{split}\]
  Notice that
\[
\|Y(s,t,x)-Y(s,t,\widetilde{x})\|\leq Ke^{\widetilde{\alpha}|t-s|}\|x-\widetilde{x}\|
\]
and by \eqref{Lgg},
\[
\|\phi_k(s,Y(s,t,x))-\phi_k(s,Y(s,t,\widetilde{x}))\|\leq \widetilde{C}K^q e^{\widetilde{\alpha}q|t-s|}\|x-\widetilde{x}\|^q,
\]
for $t,s\in\mathbb{R}$ and $x,\widetilde{x}\in \mathscr{X}$. Then we have
\[\begin{split}
 \|\mathcal{J}_{13}\|\leq& \int_{t-\theta}^t \|Y(s,t,x)-Y(s,t,\widetilde{x})+\phi_k(s,Y(s,t,x))-\phi_k(s,Y(s,t,\widetilde{x}))\|dh(s) \\
 \leq& \int_{t-\theta}^t Ke^{\widetilde{\alpha}(t-s)}\|x-\widetilde{x}\|dh(s)
                +\int_{t-\theta}^t \widetilde{C}K^q e^{\widetilde{\alpha}q(t-s)}\|x-\widetilde{x}\|^q dh(s) \\
 \leq& K \|x-\widetilde{x}\|\sum_{j=1}^{|D|} e^{\widetilde{\alpha}(t-s_j)}|h(s_j)-h(s_{j-1})|
                +\widetilde{C}K^q \|x-\widetilde{x}\|^{q} \sum_{j=1}^{|D|} e^{\widetilde{\alpha}q(t-s_j)}|h(s_j)-h(s_{j-1})| \\
 \leq& 2K\theta V_h \|x-\widetilde{x}\|^{\frac{\widetilde{\alpha}}{\widetilde{\alpha}+\alpha}}
                +2\widetilde{C}K^q \theta V_h  \|x-\widetilde{x}\|^{\frac{\widetilde{\alpha}q}{\widetilde{\alpha}+\alpha}},
\end{split}\]
and
\begin{small}
\[\begin{split}
 \|\mathcal{J}_{22}\|\leq& \left\|\int_{t-\theta}^t d_\sigma[\mathscr{V}(t)P\mathscr{V}^{-1}(\sigma)]\right\|
        \int_0^\sigma \|Y(s,t,x)-Y(s,t,\widetilde{x})+\phi_k(s,Y(s,t,x))-\phi_k(s,Y(s,t,\widetilde{x}))\|dh(s) \\
 \leq& 2K^3\theta C^3 e^{3CV_A}V_h V_A^2 e^{(\widetilde{\alpha}-\alpha)\theta}\|x-\widetilde{x}\|
                +2\widetilde{C}K^{2+q} \theta C^3 e^{3CV_A}V_h V_A^2 e^{(\widetilde{\alpha}q-\alpha)\theta}\|x-\widetilde{x}\|^q \\
 \leq& 2K^3\theta C^3 e^{3CV_A}V_h V_A^2  \|x-\widetilde{x}\|^{\frac{2\alpha}{\widetilde{\alpha}+\alpha}}
             +2\widetilde{C}K^{2+q} \theta C^3 \|x-\widetilde{x}\|^{\frac{\alpha q+\alpha}{\widetilde{\alpha}+\alpha}},
\end{split}\]
\end{small}
and
\begin{small}
\[\begin{split}
 \|\mathcal{J}_{32}\|\leq& \left\|\int_{t}^{t+\theta} d_\sigma[\mathscr{V}(t)(I-P)\mathscr{V}^{-1}(\sigma)]\right\|
        \int_0^\sigma \|Y(s,t,x)-Y(s,t,\widetilde{x})+\phi_k(s,Y(s,t,x))-\phi_k(s,Y(s,t,\widetilde{x}))\|dh(s) \\
 \leq& 2(1+K)K^2\theta C^3 e^{3CV_A}V_h V_A^2  \|x-\widetilde{x}\|^{\frac{2\alpha}{\widetilde{\alpha}+\alpha}}
             +2\widetilde{C}(1+K)K^{1+q} \theta C^3 \|x-\widetilde{x}\|^{\frac{\alpha q+\alpha}{\widetilde{\alpha}+\alpha}}.
\end{split}\]
\end{small}
  Since $0<q\leq \frac{\alpha}{\widetilde{\alpha}+\alpha}$, and from the above all inequalities, we conclude that
\[\begin{split}
\|\phi_{k+1}(t,x)-\phi_{k+1}(t,\widetilde{x})\|\leq& \|\mathcal{J}_1\|+ \|\mathcal{J}_2\|+ \|\mathcal{J}_3\| \\
  \leq& 8V_h \|x-\widetilde{x}\|^{\frac{\alpha}{\widetilde{\alpha}+\alpha}}+4V_h e^{-\alpha\sigma} K^2 C^3 e^{3CV_A}V_A^2 \|x-\widetilde{x}\|^{\frac{\alpha}{\widetilde{\alpha}+\alpha}}\\
  &+4V_h e^{-\alpha\sigma} (1+K)K C^3 e^{3CV_A}V_A^2 \|x-\widetilde{x}\|^{\frac{\alpha}{\widetilde{\alpha}+\alpha}}
  +2K\theta V_h \|x-\widetilde{x}\|^{\frac{\widetilde{\alpha}}{\widetilde{\alpha}+\alpha}} \\
  &+2\widetilde{C}K^q \theta V_h  \|x-\widetilde{x}\|^{\frac{\widetilde{\alpha}q}{\widetilde{\alpha}+\alpha}}
   +2K^3\theta C^3 e^{3CV_A}V_h V_A^2  \|x-\widetilde{x}\|^{\frac{2\alpha}{\widetilde{\alpha}+\alpha}}\\
   &+2\widetilde{C}K^{2+q} \theta C^3 \|x-\widetilde{x}\|^{\frac{\alpha q+\alpha}{\widetilde{\alpha}+\alpha}}
   +2(1+K)K^2\theta C^3 e^{3CV_A}V_h V_A^2  \|x-\widetilde{x}\|^{\frac{2\alpha}{\widetilde{\alpha}+\alpha}}\\
   &+2\widetilde{C}(1+K)K^{1+q} \theta C^3 \|x-\widetilde{x}\|^{\frac{\alpha q+\alpha}{\widetilde{\alpha}+\alpha}} \\
\leq& \widetilde{C}\|x-\widetilde{x}\|^q,
\end{split}\]
for some constant $\widetilde{C}>0$, which implies that \eqref{Lgg} holds for all $k$.
  Consequently, if $\|x-\widetilde{x}\|<1$, then
\[
\|\Phi(t,x)-\Phi(t,\widetilde{x})\| \leq \widehat{C}\|x-\widetilde{x}\|^q.
\]

\section{Applications}

\subsection{A Hartman-Grobman theorem for Measure differential equations (MDEs)}
\subsubsection{Fundamental theory for MDEs}
Let $\mathscr{X}$  be a Banach space and  $\mathbb{I}\subset \mathbb{R}$ be an interval.
Consider the linear MDE
\begin{equation}\label{ML}
  Dx=\mathscr{A}(t)x+\mathscr{C}(t)Du,
\end{equation}
where $Dx$ and $Du$ denote the distributional derivatives of $x$ and $u$, and the functions
$\mathscr{A}: \mathbb{I}\to \mathscr{B}(\mathscr{X})$, $\mathscr{C}: \mathbb{I}\to \mathscr{B}(\mathscr{X})$ and $u:\mathbb{I}\to\mathbb{R}$
satisfying the following conditions:
\\
(D1) $\mathscr{A}(t)$ is Perron integrable for any  $t\in \mathbb{I}$.\\
(D2) $u(t)$ is of locally bounded variation for any  $t\in \mathbb{I}$ and continuous from the left on $\mathbb{I}\backslash \{\inf \mathbb{I}\}$. \\
(D3) $du$ denotes the Lebesgue-Stieltjes measure generated by the
function $u$, $\mathscr{C}(t)$ is Perron-Stieltjes integrable in  $u$ for any  $t\in \mathbb{I}$.\\
Moreover, we consider some additional assumptions:\\
(D4) There is a Lebesgue measure function $m_1:\mathbb{I}\to\mathbb{R}$ satisfying for any $c,d\in\mathbb{I}$, we have
$\int_c^d m_1(s)ds<\infty$ and
\[
\left\|\int_c^d \mathscr{A}(s)ds\right\|\leq \int_c^d m_1(s)ds.
\]
(D5) There exists a function $du$-measurable $m_2:\mathbb{I}\to\mathbb{R}$ satisfying for any $c,d\in\mathbb{I}$, we have
$\int_c^d m_2(s)du(s)<\infty$ and
\[
\left\|\int_c^d \mathscr{C}(s)du(s)\right\|\leq \int_c^d m_2(s)du(s).
\]
(D6) For all $t$ such that $t$ is a point of discontinuity of $u$, we have
\[
\left(I+\lim\limits_{r\to t^+}\int_t^r\mathscr{C}(s)du(s)\right)^{-1}\in \mathscr{B}(\mathscr{X}).
\]

By  (D1)--(D3), one says that $x:[c,d]\subset \mathbb{I}\to \mathscr{X}$ is a solution of \eqref{ML} satisfying the initial value $x(t_0)=x_0$, if
\[
x(t)=x_0+\int_{t_0}^{t} \mathscr{A}(s)x(s)ds+\int_{t_0}^t \mathscr{C}(s)x(s)du(s).
\]
   If all conditions (D1)--(D6) hold, then the existence and uniqueness of a solution of \eqref{ML} associated to the initial value $x(t_0)=x_0$ follows
from Theorem 5.2 in \cite{BFS-JDE2018} immediately.
  Hence, conditions (D1)--(D6) are valid throughout this subsection.

\begin{lemma}(\cite{S-BOOK1992} Theorem 5.17)
 Given $t_0\in [c,d]$, 
  the function $x:[c,d]\subset\mathbb{I}\to \mathscr{X}$ is a solution of \eqref{ML} iff  $x$
is a solution of
\begin{equation}\label{GGL}
  \begin{cases}
  \frac{dx}{d\tau}=D[A(t)x+G(t)x],\\
  x(t_0)=x_0,
  \end{cases}
\end{equation}
where $A(t)=\int_{t_0}^{t} \mathscr{A}(s)ds$ and $G(t)=\int_{t_0}^t \mathscr{C}(s)du(s)$.
\end{lemma}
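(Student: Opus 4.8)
The plan is to reduce the claimed equivalence to the equivalence of the two \emph{integral} equations that define solutions, and then to identify the single Kurzweil integral appearing in the GODE \eqref{GGL} with the pair of Perron and Perron--Stieltjes integrals appearing in the MDE \eqref{ML} by means of a change-of-variables (substitution) theorem for Perron--Stieltjes integrals. A useful preliminary observation is that, since $K(x,t)=A(t)x+G(t)x=(A(t)+G(t))x$, equation \eqref{GGL} is nothing but the linear GODE $\frac{dx}{d\tau}=D[\widetilde{A}(t)x]$ with integrator $\widetilde{A}:=A+G$, so it falls squarely inside the framework of Subsection 2.3.

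First I would record that, by the definition recalled after \eqref{Z1}, a function $x:[c,d]\to\mathscr{X}$ is a solution of \eqref{GGL} if and only if $x(t_0)=x_0$ and
\[
x(t)=x_0+\int_{t_0}^{t} D[A(s)x(\tau)+G(s)x(\tau)]=x_0+\int_{t_0}^{t} d[A(s)+G(s)]\,x(s),\qquad t\in[c,d],
\]
the right-hand side being the Perron--Stieltjes integral associated with that Kurzweil integral. I would then check that $\widetilde{A}$ is admissible: $A(t)=\int_{t_0}^t\mathscr{A}(s)\,ds$ is absolutely continuous, hence of bounded variation on compact subintervals by (D4), while $G(t)=\int_{t_0}^t\mathscr{C}(s)\,du(s)$ is of bounded variation by (D5), so (H1) holds for $\widetilde{A}$; moreover $A$ is continuous and the jumps of $G$ occur only at discontinuities of $u$, so $\widetilde{A}(t^+)-\widetilde{A}(t)=\lim_{r\to t^+}\int_t^r\mathscr{C}(s)\,du(s)$ and $\widetilde{A}(t)-\widetilde{A}(t^-)=0$ (using the left continuity of $u$ in (D2)), whence (H2) is exactly (D6). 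In particular any solution of either equation is regulated (indeed of locally bounded variation), which is the regularity needed below.

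The heart of the argument is then the substitution formula for Perron--Stieltjes integrals from \cite{S-BOOK1992}: if $A$ is the indefinite Perron integral of the Perron-integrable $\mathscr{A}$, then for every regulated $x$ one has $\int_c^d d[A(s)]\,x(s)=\int_c^d\mathscr{A}(s)x(s)\,ds$, and likewise, since $G$ is the indefinite Perron--Stieltjes integral of $\mathscr{C}$ with respect to $u$, $\int_c^d d[G(s)]\,x(s)=\int_c^d\mathscr{C}(s)x(s)\,du(s)$. Combining these with additivity of the integral in the integrator gives
\[
\int_{t_0}^{t} d[A(s)+G(s)]\,x(s)=\int_{t_0}^{t}\mathscr{A}(s)x(s)\,ds+\int_{t_0}^{t}\mathscr{C}(s)x(s)\,du(s),
\]
so the integral equation defining a solution of \eqref{GGL} is termwise identical to the one defining a solution of \eqref{ML}; since $x(t_0)=x_0$ is common to both, the two notions of solution on $[c,d]$ coincide.

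I expect the main obstacle to be the careful justification of the two substitution identities in the operator-valued / Banach-space setting, namely verifying that $\mathscr{A}(\cdot)x(\cdot)$ is Perron integrable and $\mathscr{C}(\cdot)x(\cdot)$ is Perron--Stieltjes integrable with respect to $u$ (which follows from (D1), (D3) together with the boundedness and regulatedness of the solution $x$), and that the indefinite-integral form of the integrator legitimately converts $d[A]$ into $\mathscr{A}\,ds$ and $d[G]$ into $\mathscr{C}\,du$. This is precisely the content of the relevant results in \cite{S-BOOK1992} (in particular Theorem 5.17), so in the end the proof consists of verifying those hypotheses and invoking them; everything else is routine bookkeeping.
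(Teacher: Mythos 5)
The paper does not actually prove this lemma: it is quoted verbatim from Schwabik's book (Theorem 5.17 of \cite{S-BOOK1992}), so there is no in-paper argument to compare against. Your reconstruction --- rewriting both notions of solution as integral equations, noting that \eqref{GGL} is the linear GODE with integrator $\widetilde{A}=A+G$ satisfying (H1)--(H2) via (D4)--(D6), and then converting $\int d[A(s)]x(s)$ and $\int d[G(s)]x(s)$ into $\int\mathscr{A}(s)x(s)\,ds$ and $\int\mathscr{C}(s)x(s)\,du(s)$ by the substitution theorem for Perron--Stieltjes integrals applied to the regulated (indeed locally BV) solution $x$ --- is exactly the standard proof and is how Schwabik establishes the correspondence; the Banach-space versions of the substitution identities you flag as the main technical point are available (e.g.\ in \cite{S-MB1996} and \cite{F-MB2002}). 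No gap.
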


   The fundamental operator $U:\mathbb{I}\times \mathbb{I}\to \mathscr{B}(\mathscr{X})$ of MDEs \eqref{ML} was given by \cite{BFS-JDE2018} satisfying
\begin{equation}\label{MMU}
  U(t,s)=I+\int_s^t \mathscr{A}(r)U(r,s)dr+\int_s^t \mathscr{C}(r)U(r,s)du(r), \quad t,s\in\mathbb{I}.
\end{equation}
  Moreover,
the function $x(t)=U(t,t_0)x_0$ is the solution of
\eqref{ML} satisfying the initial value $x(t_0)=x_0\in \mathscr{X}$.

\begin{definition}\label{MED}(exponential dichotomy, \cite{BFS-JDE2018})
The MDEs \eqref{ML} have an exponential dichotomy with $(P,K,\alpha)$ on $\mathbb{I}$,  if there exist  a projection $P:\mathscr{X} \to \mathscr{X}$ and
constants $K, \alpha$  satisfying
\begin{equation}\label{M1}
\begin{cases}
 \|\mathscr{U}(t) P\mathscr{U}^{-1}(s)\|\leq K e^{-\alpha(t-s)}, \quad t\geq s;\\
 \|\mathscr{U}(t)(I-P)\mathscr{U}^{-1}(s)\|\leq Ke^{\alpha(t-s)}, \quad t\leq s,
\end{cases}
\end{equation}
where $\mathscr{U}(t)=U(t,0)$ and $\mathscr{U}^{-1}(t)=U(0,t)$.
\end{definition}

\begin{definition}(strong exponential dichotomy)
  The MDEs \eqref{ML} have a strong exponential dichotomy on $\mathbb{I}$,  if \eqref{M1} holds and there exists a positive constant $\widetilde{\alpha}>\alpha$
such that
\begin{equation}\label{M2}
   \|\mathscr{U}(t) \mathscr{U}^{-1}(s)\|\leq Ke^{\widetilde{\alpha}|t-s|}, \quad \mathrm{for} \; t,s\in\mathbb{R}.
\end{equation}
\end{definition}

\begin{lemma}\label{lemma-ed}(\cite{BFS-JDE2018} Proposition 5.7)
The MDEs \eqref{ML} have an exponential dichotomy with $(P,K,\alpha)$ iff the GODEs
\begin{equation}\label{GGODE}
  \frac{dx}{d\tau}=D[A(t)x+F(t)x], \quad t\in\mathbb{I},
\end{equation}
have an exponential dichotomy with $(P,K,\alpha)$,
where $A(t)=\int_{t_0}^{t} \mathscr{A}(s)ds$ and $G(t)=\int_{t_0}^t \mathscr{C}(s)du(s)$.
\end{lemma}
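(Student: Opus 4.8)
The plan is to reduce the dichotomy of the MDE \eqref{ML} to that of the associated linear GODE \eqref{GGODE} by proving that their fundamental operators coincide; once this is done, the equivalence of the two dichotomy notions is purely formal, since both are stated in terms of the same objects.

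First I would record that, under the correspondence $A(t)=\int_{t_0}^{t}\mathscr{A}(s)\,ds$ and $G(t)=\int_{t_0}^{t}\mathscr{C}(s)\,du(s)$, the linear GODE \eqref{GGODE}, which I write as $\frac{dx}{d\tau}=D[(A+G)(t)x]$, satisfies the structural hypotheses (H1)--(H2) with coefficient $A+G$. Indeed, (D4) gives $\mathrm{var}_{c}^{d}A\le\int_c^d m_1(s)\,ds<\infty$ and (D5) gives $\mathrm{var}_{c}^{d}G\le\int_c^d m_2(s)\,du(s)<\infty$, so $A+G\in BV([c,d],\mathscr{B}(\mathscr{X}))$ for every compact $[c,d]\subset\mathbb{I}$, i.e. (H1) holds. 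Since $A$ is continuous and, by (D2), $u$ and hence $G$ are left continuous, at any point $t$ one has $[A+G](t)-[A+G](t^{-})=0$ and $[A+G](t^{+})-[A+G](t)=\lim_{r\to t^{+}}\int_t^r\mathscr{C}(s)\,du(s)$, whose associated operator $I+\lim_{r\to t^{+}}\int_t^r\mathscr{C}(s)\,du(s)$ is invertible by (D6); thus (H2) holds. I would also note that changing the base point $t_0$ only shifts $A$ and $G$ by constants, hence changes neither $d[A+G]$ nor \eqref{GGODE}, so the construction is independent of $t_0$.

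Next I would identify the fundamental operators. Let $U(t,s)$ be the fundamental operator of \eqref{ML}, characterized by \eqref{MMU}, and let $V(t,s)$ be the fundamental operator of \eqref{GGODE}, characterized by the identity $V(t,s)=I+\int_s^t d[(A+G)(r)]V(r,s)$. Fix $s\in\mathbb{I}$ and $x_0\in\mathscr{X}$. By the equivalence lemma (\cite{S-BOOK1992}, Theorem 5.17), the map $t\mapsto U(t,s)x_0$, being the solution of \eqref{ML} through $(s,x_0)$, is precisely the solution of \eqref{GGODE} through $(s,x_0)$, which equals $t\mapsto V(t,s)x_0$. Since $x_0\in\mathscr{X}$ is arbitrary, $U(t,s)=V(t,s)$ for all $t,s\in\mathbb{I}$. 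Alternatively, one may verify directly that \eqref{MMU} and the defining identity for $V$ coincide, using the Perron--Stieltjes substitution formulas $\int_s^t d[A(r)]f(r)=\int_s^t\mathscr{A}(r)f(r)\,dr$ and $\int_s^t d[G(r)]f(r)=\int_s^t\mathscr{C}(r)f(r)\,du(r)$, and then invoke uniqueness of the fundamental operator under (H1)--(H2). In particular $\mathscr{U}(t)=U(t,0)=V(t,0)=\mathscr{V}(t)$ and $\mathscr{U}^{-1}(t)=U(0,t)=V(0,t)=\mathscr{V}^{-1}(t)$.

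The conclusion is then immediate: the inequalities \eqref{M1} defining an exponential dichotomy with $(P,K,\alpha)$ for the MDE \eqref{ML} involve only $\mathscr{U}(t)$ and $\mathscr{U}^{-1}(s)$, whereas the inequalities \eqref{ED} defining an exponential dichotomy with $(P,K,\alpha)$ for the GODE involve only $\mathscr{V}(t)$ and $\mathscr{V}^{-1}(s)$; since $\mathscr{U}\equiv\mathscr{V}$ and $\mathscr{U}^{-1}\equiv\mathscr{V}^{-1}$, the two systems of inequalities are literally the same — same projection $P$, same constants $K,\alpha$. Hence \eqref{ML} has an exponential dichotomy with $(P,K,\alpha)$ if and only if \eqref{GGODE} does. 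I expect the only genuinely technical step to be the identity $U=V$: it rests on the substitution theorem relating Perron--Stieltjes integration against the indefinite integrals $A$ and $G$ to ordinary Perron and Perron--Stieltjes integration, together with the existence and uniqueness theory for the fundamental operator under (H1)--(H2); the rest is bookkeeping.
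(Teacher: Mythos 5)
Your argument is correct. The paper does not prove this lemma at all --- it is quoted directly from Proposition 5.7 of \cite{BFS-JDE2018} --- and your route (check (H1)--(H2) for the coefficient $A+G$ using (D4)--(D6) and the left-continuity of $u$, identify the fundamental operators $U$ and $V$ via the solution-equivalence lemma together with uniqueness, and then observe that the two dichotomy definitions become literally the same system of inequalities for $\mathscr{U}\equiv\mathscr{V}$) is precisely the standard argument used in that reference.
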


\subsubsection{Main results for MDEs}

Consider the nonlinear MDEs as follows.
\begin{equation}\label{MNL}
  Dx=\mathscr{A}(t)x+\mathscr{C}(t)xDu+\mathscr{H}(t,x)Du,
\end{equation}
where $\mathscr{H}(t,x):\mathbb{R} \times \mathscr{X} \to \mathscr{X}$ is  Lebesgue-Stieltjes integrable with respect to $u$ and $\mathscr{H}(t,0)=0$.
Suppose that:\\
(a)  for all $t$ such that $t$ is a point of discontinuity of $u$, there exists a positive constant $C_g>0$ satisfying
\[
\left\|\left(Id+\lim\limits_{r\to t^+}\int_t^r\mathscr{C}(s)du(s)\right)^{-1}\right\|\leq C_g;
\]
(b)  $u$ is a bounded variation function on $\mathbb{R}$ and $u$ is nondecreasing, i.e.,
\[
V_u:=\sup\{\mathrm{var}_c^d u: c,d \in\mathbb{R}, c<d\}<\infty;
\]
(c)
  $\mathscr{H}(t,x)$ is uniformly bounded in $t\in\mathbb{R}$ with constant $M_h>0$ for any $x\in \mathscr{X}$, i.e.,
\[
\|\mathscr{H}(t,x)\|\leq M_h;
\]
(d) there is a sufficiently small Lipschitz constant $L_h$ such that for any $t\in\mathbb{R}$ and $x,\widetilde{x}\in \mathscr{X}$,
\[
\|\mathscr{H}(t,x)-\mathscr{H}(t,\widetilde{x})\|\leq L_h \|x-\widetilde{x}\|.
\]

  Then we establish our main results for MDEs.
\begin{theorem}\label{mde-thm1}
  Suppose that linear MDEs \eqref{ML} possess an exponential dichotomy with the form \eqref{M1}.
Further assume that conditions (a), (b), (c), (d) hold. If
\[
2 L_h V_u \left(1+K(1+2K)C_g^3e^{3C_g V_{A+F}}V_{A+F}^2\right)<1,
\]
then Eq. \eqref{MNL} has a unique bounded solution.
\end{theorem}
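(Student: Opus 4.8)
The plan is to reduce the nonlinear MDE \eqref{MNL} to a nonlinear GODE of the form \eqref{NL} and then to rerun the Picard iteration behind Theorem \ref{Bound1}, keeping the bound $M_h$ and the Lipschitz constant $L_h$ in separate roles. Set $A(t):=\int_0^t\mathscr{A}(s)\,ds$, $F(t):=\int_0^t\mathscr{C}(s)\,du(s)$ and $\mathscr{G}(x,t):=\int_0^t\mathscr{H}(s,x)\,du(s)$. By the correspondence between measure differential equations and generalized ODEs (cf. \cite{S-BOOK1992} and Lemma \ref{lemma-ed}), a function $x$ is a bounded solution on $\mathbb{R}$ of \eqref{MNL} if and only if it is a bounded solution of
\[
\frac{dx}{d\tau}=D\bigl[(A(t)+F(t))x+\mathscr{G}(x,t)\bigr],
\]
and $\mathscr{G}$ is Kurzweil integrable because $\mathscr{H}$ is Lebesgue--Stieltjes integrable with respect to $u$.

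Next I would verify that this GODE satisfies the structural requirements used in the proof of Theorem \ref{Bound1}, with the operator $A+F$ playing the role of ``$A$'', with $V_{A+F}:=\sup\{\mathrm{var}_c^d(A+F):c<d\}$ (finite by the standing data) in place of $V_A$, and with $C_g$ in place of $C$. Condition (A1) follows from Lemma \ref{lemma-ed}, which transfers the exponential dichotomy of \eqref{ML} to the GODE with linear part $A+F$. For (A2): since $A$ is an indefinite Perron integral it is continuous, and since $u$ is left continuous by (D2) the function $F$ has no left jumps, so $(A+F)(t)-(A+F)(t^-)=0$ and the corresponding inverse is $I$; the right jump $(A+F)(t^+)-(A+F)(t)=\lim_{r\to t^+}\int_t^r\mathscr{C}(s)\,du(s)$ is precisely the operator whose $(Id+\cdot)$-inverse is bounded by $C_g$ in hypothesis (a). Finally, setting $h_1(t):=M_h\,u(t)$ and $h_2(t):=L_h\,u(t)$, hypotheses (c) and (d) give $\|\mathscr{G}(x,t_2)-\mathscr{G}(x,t_1)\|\le|h_1(t_2)-h_1(t_1)|$ and $\|\mathscr{G}(x,t_2)-\mathscr{G}(x,t_1)-\mathscr{G}(z,t_2)+\mathscr{G}(z,t_1)\|\le\|x-z\|\,|h_2(t_2)-h_2(t_1)|$; these are \eqref{HHH1}--\eqref{HHH2}, save that the two moduli need not coincide.

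With this in hand I would imitate the proof of Theorem \ref{Bound1}: starting from $x_0\equiv 0$ and defining $x_{m+1}$ by the right-hand side of \eqref{3} with $\mathscr{G}$ in place of $F$, hypothesis (c) yields, exactly as in \eqref{LV}, $\|\int_0^\sigma D\mathscr{G}(x_m(\tau),s)\|\le M_h|u(\sigma)-u(0)|\le M_hV_u$, so every iterate is well defined and bounded by a constant independent of $m$ --- this is the only place where $M_h$ (which need not be small) is used. Hypothesis (d) yields, as in \eqref{Bound2}, $\|\int_0^\sigma\bigl(D\mathscr{G}(x_m(\tau),s)-D\mathscr{G}(x_{m-1}(\tau),s)\bigr)\|\le L_h\int_0^\sigma\|x_m(\tau)-x_{m-1}(\tau)\|\,du(s)$, and inserting this together with \eqref{aaa}--\eqref{bbb} into the expression for $x_{m+1}-x_m$ gives, with $T_m:=\sup_{t\in\mathbb{R}}\|x_{m+1}(t)-x_m(t)\|$,
\[
T_m\le 2L_hV_u\bigl(1+K(1+2K)C_g^3e^{3C_gV_{A+F}}V_{A+F}^2\bigr)T_{m-1}.
\]
The hypothesis of the theorem makes the prefactor strictly less than $1$, so $\{x_m\}$ converges uniformly on $\mathbb{R}$ to a bounded solution. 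For uniqueness, given a second bounded solution $y$, I would use Lemma \ref{changshu} and Proposition \ref{Zero} (the homogeneous linear GODE with exponential dichotomy has no nontrivial bounded solution) to bring $y$ to the same fixed-point identity, whence the same prefactor forces $\sup_{t\in\mathbb{R}}\|x(t)-y(t)\|\le 2L_hV_u(1+K(1+2K)C_g^3e^{3C_gV_{A+F}}V_{A+F}^2)\sup_{t\in\mathbb{R}}\|x(t)-y(t)\|$, i.e. $x=y$.

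The main obstacle --- and the reason Theorem \ref{Bound1} cannot be quoted verbatim --- is exactly this decoupling: Theorem \ref{Bound1} asks for one small modulus $V_h$, which here would force $M_hV_u$ small, whereas the present hypotheses leave $M_h$ arbitrary. One must therefore rerun the iteration with $h_1=M_hu$ governing boundedness of the iterates and $h_2=L_hu$ governing the contraction, and check en route that the invertibility/jump data (H2)--(A2) and the finiteness of $V_{A+F}$ genuinely pass from the MDE to the associated GODE.
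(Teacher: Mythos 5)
Your proposal is correct and follows essentially the same route as the paper: reduce \eqref{MNL} to the nonlinear GODE with linear part $A+F$ and perturbation $\mathcal{N}(x,t)=\int_{t_0}^t\mathscr{H}(s,x(s))\,du(s)$, transfer the dichotomy via Lemma \ref{lemma-ed}, verify the two inequalities defining the class $\mathscr{F}(\Omega,\cdot)$ and the finiteness of $V_{A+F}$, and then invoke the Picard scheme of Theorem \ref{Bound1}. The one point where you go beyond the paper is worth keeping: the paper fits $\mathcal{N}$ into $\mathscr{F}(\Omega,u)$ even though its two estimates carry different moduli ($M_h u$ for \eqref{HHH1}, $L_h u$ for \eqref{HHH2}), and a verbatim application of Theorem \ref{Bound1} would require the single $V_h$ --- hence $\max(M_h,L_h)V_u$ --- to be small, whereas the stated hypothesis only controls $L_hV_u$; your rerun of the iteration with $M_hu$ governing boundedness of the iterates and $L_hu$ governing the contraction is exactly the correct repair, and it matches what the proof of Theorem \ref{Bound1} actually uses.
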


\begin{theorem}\label{mde-thm2}
  Suppose that all conditions of Theorem \ref{mde-thm1} hold. Then the linear MDEs \eqref{ML} are topologically conjugated to the nonlinear MDEs \eqref{MNL}.
\end{theorem}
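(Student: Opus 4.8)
The strategy is to reduce Theorem~\ref{mde-thm2} to the abstract linearization theorem, Theorem~\ref{linearization-thm}, by passing from the measure differential equations to their associated generalized ODEs. Fix $t_0=0$ and set
\[
A(t):=\int_0^t\mathscr{A}(s)\,ds,\qquad G(t):=\int_0^t\mathscr{C}(s)\,du(s),\qquad \widetilde{A}:=A+G,
\]
and define $F:\Omega\to\mathscr{X}$ by $F(x,t):=\int_0^t\mathscr{H}(s,x)\,du(s)$. By the correspondence between MDEs and GODEs (the linear case is Theorem~5.17 of \cite{S-BOOK1992}; the nonlinear term $\mathscr{H}(t,x)Du$ is handled by the same mechanism), a function $x$ solves the linear MDE \eqref{ML} if and only if it solves $\frac{dx}{d\tau}=D[\widetilde{A}(t)x]$, and $x$ solves the nonlinear MDE \eqref{MNL} if and only if it solves $\frac{dx}{d\tau}=D[\widetilde{A}(t)x+F(x,t)]$. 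Hence it suffices to check that the pair $(\widetilde{A},F)$ satisfies hypotheses (A1)--(A3): Theorem~\ref{linearization-thm} then produces homeomorphisms $\Phi(t,\cdot),\Psi(t,\cdot)$ of $\mathscr{X}$ conjugating the solutions of the two GODEs, and by the same correspondence these maps conjugate the solutions of \eqref{ML} and \eqref{MNL}, which is the assertion of Theorem~\ref{mde-thm2}.

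For (A1) and (A2): by Lemma~\ref{lemma-ed}, the exponential dichotomy \eqref{M1} of \eqref{ML} is equivalent to an exponential dichotomy of $\frac{dx}{d\tau}=D[\widetilde{A}(t)x]$ with the same triple $(P,K,\alpha)$, which is (A1). For (H1)--(H2) and (A2) one splits $\widetilde A$ into its continuous Lebesgue part $A$ and its Stieltjes part $G$. Local bounded variation of $\widetilde A$ follows from (D4)--(D5) together with $\mathrm{var}_a^b A\le\int_a^b m_1\,ds$ and $\mathrm{var}_a^b G\le\int_a^b m_2\,du$, while the global bound $V_{\widetilde A}=V_{A+F}<\infty$ is part of the data of Theorem~\ref{mde-thm1}. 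Since $A$ is continuous, $\widetilde A$ inherits no jump from $A$; since $u$ is left continuous by (D2), so is $G$, whence the left jump $\widetilde A(t)-\widetilde A(t^-)$ vanishes and its associated operator is the identity; the right jump equals $\widetilde A(t^+)-\widetilde A(t)=\lim_{r\to t^+}\int_t^r\mathscr{C}(s)\,du(s)$, whose operator is invertible with norm at most $C_g$ by hypothesis (a). Thus (H2) holds, and (A2) holds with the constant $C_g$ (which may be taken $\ge 1$).

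For (A3): put $h(t):=\max\{M_h,L_h\}\,u(t)$, nondecreasing by (b), with $V_h=\max\{M_h,L_h\}\,V_u<\infty$. By (c),
\[
\|F(x,t_2)-F(x,t_1)\|=\Big\|\int_{t_1}^{t_2}\mathscr{H}(s,x)\,du(s)\Big\|\le M_h\,|u(t_2)-u(t_1)|\le|h(t_2)-h(t_1)|,
\]
which is \eqref{HHH1}, and by (d),
\[
\|F(x,t_2)-F(x,t_1)-F(z,t_2)+F(z,t_1)\|=\Big\|\int_{t_1}^{t_2}\big(\mathscr{H}(s,x)-\mathscr{H}(s,z)\big)\,du(s)\Big\|\le L_h\|x-z\|\,|h(t_2)-h(t_1)|,
\]
which is \eqref{HHH2}; hence $F\in\mathscr{F}(\Omega,h)$ and (A3) holds. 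Moreover, in the contraction arguments underlying Theorems~\ref{Bound1} and \ref{linearization-thm} the size of the perturbation enters only through the Lipschitz-type estimate \eqref{Bound2}, whose coefficient is $L_h$; substituting $C=C_g$ and $V_A=V_{A+F}$ into \eqref{aaa}--\eqref{bbb} and using $\|P\|\le K$, the relevant contraction constant is bounded by $2L_hV_u\big(1+K(1+2K)C_g^3e^{3C_gV_{A+F}}V_{A+F}^2\big)$, which is $<1$ by the hypothesis of Theorem~\ref{mde-thm1}. Therefore Theorem~\ref{linearization-thm} applies to the pair $(\widetilde A,F)$, and transporting the resulting conjugacy back through the MDE--GODE correspondence gives the topological conjugacy of \eqref{ML} and \eqref{MNL}.

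The step I expect to be most delicate is the verification of (A2): one must ensure that gluing the continuous part $A$ to the possibly jumping, left-continuous part $G$ preserves the structural hypotheses of the GODE theory --- in particular, uniform invertibility of the right-jump operators of $\widetilde A$, where condition (a) and the left continuity (D2) of $u$ carry the argument, together with the finiteness of $V_{A+F}$. A companion bookkeeping point, needed so that Theorem~\ref{mde-thm2} rests exactly on the hypothesis of Theorem~\ref{mde-thm1}, is to track that the contraction constant is governed by $L_h$ alone (via \eqref{Bound2}) and not by the uniform bound $M_h$; once this is settled the remainder of the argument is a routine translation.
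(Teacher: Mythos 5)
Your proposal is correct and follows essentially the same route the paper intends: the paper proves Theorem~\ref{mde-thm2} only implicitly, by carrying out the MDE-to-GODE translation and the verification of (A1)--(A3) in the proof of Theorem~\ref{mde-thm1} and then remarking that the remaining theorems follow ``similarly,'' i.e.\ by invoking Theorem~\ref{linearization-thm} in place of Theorem~\ref{Bound1} once the same hypotheses are checked. Your write-up simply makes that reduction explicit, and is in fact somewhat more careful than the paper on the jump conditions in (A2) (left continuity of $u$ killing the left jump, condition (a) controlling the right jump) and on the point that the contraction constant is governed by $L_h$ rather than $M_h$.
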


\begin{theorem}\label{mde-thm3}
 Suppose that linear MDEs \eqref{ML} admits a strong exponential dichotomy with the form \eqref{M2} and all conditions of Theorem \ref{mde-thm1} hold.
 If further the bounded variation function $u$ satisfies $|u(t)-u(s)|\leq e^{-\alpha|t-s|}$, then the conjugacies are both H\"{o}lder continuous.
\end{theorem}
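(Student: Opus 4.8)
The plan is to deduce Theorem~\ref{mde-thm3} from the H\"older result for GODEs, Theorem~\ref{holder-thm}, through the MDE--GODE correspondence already used in the proofs of Theorems~\ref{mde-thm1}--\ref{mde-thm2}. By Theorem~5.17 in \cite{S-BOOK1992}, Lemma~\ref{lemma-ed}, and the analogous Kurzweil--Stieltjes identification of the nonlinear term, the nonlinear MDE~\eqref{MNL} is equivalent to a nonlinear GODE of the form~\eqref{NL} in which the role of the generator is played by
\[
A(t)+G(t)=\int_0^t\mathscr A(s)\,ds+\int_0^t\mathscr C(s)\,du(s),
\]
and the role of the Kurzweil-integrable perturbation $F(x,t)$ by $\int_0^t\mathscr H(s,x)\,du(s)$. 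The conjugacies $\Phi,\Psi$ furnished by Theorem~\ref{mde-thm2} for the pair \eqref{ML}--\eqref{MNL} are precisely those produced by Theorem~\ref{linearization-thm} for this associated pair of GODEs, so it suffices to check, for the associated GODE, every hypothesis of Theorem~\ref{holder-thm}.

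First I would record that conditions (A1)--(A3) are already in force for the associated GODE: this is exactly what the proof of Theorem~\ref{mde-thm1} establishes, with the variation of $A+G$ controlled through (b) and the invertibility/jump bounds (D6) and (a), and with $V_h$ controlled by $c_0V_u$, where $c_0:=\max\{M_h,L_h\}$, using (c)--(d). Next, the strong exponential dichotomy demanded by Definition~\ref{s-ed} is inherited from~\eqref{M2}: Lemma~\ref{lemma-ed} supplies the ordinary dichotomy~\eqref{ED}, while the fundamental operators of the linear MDE and of the associated linear GODE coincide, so~\eqref{M2} passes over verbatim to~\eqref{SED} with the same constant $\widetilde\alpha$.

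The substantive step is to verify the strong hypothesis~\eqref{SFF} for $F(x,t)=\int_0^t\mathscr H(s,x)\,du(s)$, and this is exactly where the new assumption $|u(t)-u(s)|\le e^{-\alpha|t-s|}$ enters. For $t\le s$ one has, by (c),
\[
\|F(x,s)-F(x,t)\|=\Big\|\int_t^s\mathscr H(r,x)\,du(r)\Big\|\le M_h\,\big(u(s)-u(t)\big),
\]
so that, taking $h(t):=c_0\,u(t)$ (nondecreasing, with $V_h=c_0V_u<\infty$ by (b)), the exponential decay $u(s)-u(t)\le e^{-\alpha|s-t|}$ lets one absorb a factor $e^{-\alpha|s-t|}$ and place $F$ in the class required by~\eqref{SFF}; the same $h$ also witnesses $F\in\mathscr{F}(\Omega,h)$ via (c)--(d). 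With (A1)--(A3), Definition~\ref{s-ed}, and~\eqref{SFF} all satisfied, Theorem~\ref{holder-thm} applies to the associated GODE and yields
\[
\|\Psi(t,x)-\Psi(t,\widetilde x)\|\le C\,\|x-\widetilde x\|^{\frac{\alpha}{\alpha+\widetilde\alpha}},\qquad
\|\Phi(t,x)-\Phi(t,\widetilde x)\|\le \widehat C\,\|x-\widetilde x\|^{q}
\]
for $0<\|x-\widetilde x\|<1$ and some $0<q\le\frac{\alpha}{\alpha+\widetilde\alpha}$; since these $\Phi,\Psi$ are the conjugacies between \eqref{ML} and \eqref{MNL}, Theorem~\ref{mde-thm3} follows.

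I expect the main obstacle to be precisely this verification of~\eqref{SFF}: one must make precise how the exponential weight $e^{-\alpha|s-t|}$ carried by $u(s)-u(t)$ is compatible with the bounded-variation function $h$ appearing in~\eqref{SFF}, so that every Perron--Stieltjes/Kurzweil estimate in Subsections~5.2--5.3 --- notably the splitting at the level $\theta=\frac{1}{\widetilde\alpha+\alpha}\ln\frac{1}{\|x-\widetilde x\|}$ and the Gronwall-type Lemma~\ref{xx-ineq} --- carries over unchanged to the associated $F$. A secondary technical point is to confirm that the invertibility conditions (D6) and (a) genuinely deliver (A2) for $A(t)+G(t)$, using that $A$ is continuous so that only $G$ contributes jumps, and that $V_{A+F}<\infty$ under the standing hypotheses.
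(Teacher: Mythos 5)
Your overall strategy --- rewriting \eqref{MNL} as the associated nonlinear GODE \eqref{NGODE}, checking (A1)--(A3) exactly as in the proof of Theorem \ref{mde-thm1}, transferring the strong dichotomy \eqref{M2} to \eqref{SED} through the coincidence of the fundamental operators, and then invoking Theorem \ref{holder-thm} --- is precisely the route the paper intends: the paper gives no separate argument for Theorem \ref{mde-thm3} beyond the remark that it is ``similar to Theorem \ref{mde-thm1}'', so at the structural level you and the paper agree.

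The gap is in the one step you yourself flag as the main obstacle: the verification of \eqref{SFF} for $F(x,t)=\int_0^t\mathscr H(s,x)\,du(s)$. With your choice $h:=c_0u$, $c_0=\max\{M_h,L_h\}$, condition \eqref{SFF} reads
\[
\|F(x,s)-F(x,t)\|\le c_0\,e^{-\alpha|s-t|}\,|u(s)-u(t)|,
\]
whereas what condition (c) delivers is $\|F(x,s)-F(x,t)\|\le M_h\,|u(s)-u(t)|$. The new hypothesis $|u(s)-u(t)|\le e^{-\alpha|s-t|}$ bounds the \emph{same} factor $|u(s)-u(t)|$ that already sits on both sides of the target inequality; it cannot be used to ``absorb'' an \emph{additional} factor $e^{-\alpha|s-t|}$. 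Concretely, either you drop the factor $|h(s)-h(t)|$ from the right-hand side of \eqref{SFF} (which ruins every $V_h$-smallness estimate in Subsections 5.2--5.3, e.g. the bounds on $\mathcal I_{11}$, $\mathcal I_{21}$), or you split $|u(s)-u(t)|=|u(s)-u(t)|^{1/2}|u(s)-u(t)|^{1/2}\le e^{-\frac{\alpha}{2}|s-t|}|u(s)-u(t)|^{1/2}$, which both halves the exponent and leaves $|u(s)-u(t)|^{1/2}$, which is not the increment of a nondecreasing function of bounded variation. So the assertion ``$F$ satisfies \eqref{SFF} with $h=c_0u$'' is not justified as written; one would have to construct a different nondecreasing majorant $h$ whose increments dominate $M_h|u(s)-u(t)|e^{\alpha|s-t|}$ and re-verify that its total variation is finite and sufficiently small. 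A further warning sign: for a nondecreasing $u$ on all of $\mathbb R$ the hypothesis $|u(t)-u(s)|\le e^{-\alpha|t-s|}$ forces $u(+\infty)-u(s)\le\lim_{t\to\infty}e^{-\alpha(t-s)}=0$, hence $u$ is constant and $\mathcal N\equiv0$; so either the theorem is vacuous as stated or the hypothesis is intended only for bounded $|t-s|$, and in either case the \eqref{SFF} verification must be made precise rather than asserted.
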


We only prove Theorem \ref{mde-thm1}, and verify that Theorem  \ref{mde-thm1} satisfies all the conditions of Theorem \ref{Bound1}.
The proofs of the other two theorems are similar to Theorem \ref{mde-thm1}.

\begin{proof}[Proof of Theorem \ref{mde-thm1}.]
Since \eqref{ML} has an exponential dichotomy with $(P,K,\alpha)$, we derive from Lemma \ref{lemma-ed} that
\[
\frac{dx}{d\tau}=D[A(t)x+F(t)x]
\]
has an exponential dichotomy with the same  $(P,K,\alpha)$ on $\mathbb{R}$.

Note that the solution of Eq. \eqref{MNL} with the initial value $x(t_0)=x_0$ is defined by
\[
x(t)=x_0+\int_{t_0}^{t} \mathscr{A}(s)x(s) ds+\int_{t_0}^t \mathscr{C}(s)x(s)du(s)+\int_{t_0}^t \mathscr{H}(s,x(s))du(s).
\]
Given $t,t_0\in\mathbb{R}$ and set $A(t)=\int_{t_0}^{t}\mathscr{A}(s)ds$, $F(t)=\int_{t_0}^t \mathscr{C}(s)du(s)$.
We define the Kurzweil integrable map $\mathcal{N}:\mathscr{X} \times \mathbb{R}\to \mathscr{X}$
\begin{equation*}
  \mathcal{N}(x(t),t):=\int_{t_0}^t \mathscr{H}(s,x(s))du(s).
\end{equation*}
Then we have
\[
x(t)=x_0+\int_{t_0}^t d[A(s)]x(s)+\int_{t_0}^t d[F(s)]x(s)+\int_{t_0}^t D\mathcal{N}(x(s),s),
\]
which is a solution of the following nonlinear GODEs with the initial value $x(t_0)=x_0$
\begin{equation}\label{NGODE}
\frac{dx}{d\tau}=D[A(t)x+F(t)x+\mathcal{N}(x,t)].
\end{equation}
  We now claim that $\mathcal{N}(x,t)$ belongs to the class $\mathscr{A}(\Omega, u)$, here $\Omega=\mathscr{X}\times \mathbb{R}$. In fact,
for any $t,\widetilde{t}\in\mathbb{R}$ and $x\in \mathscr{X}$, by using condition (c), we have
\[
\|\mathcal{N}(x,t)-\mathcal{N}(x,\widetilde{t})\|= \left\| \int_{\widetilde{t}}^t \mathscr{H}(s,x)du(s)\right\|\leq \|\mathscr{H}(s,x)\| |u(t)-u(\widetilde{t})|\leq M_h |u(t)-u(\widetilde{t})|,
\]
and for any $t,\widetilde{t}\in\mathbb{R}$ and $x, \widetilde{x} \in \mathscr{X}$, by using conditions (c) and (d), we have
\[\begin{split}
 \|\mathcal{N}(x,t)-\mathcal{N}(x,\widetilde{t})-\mathcal{N}(\widetilde{x},t)+\mathcal{N}(\widetilde{x},\widetilde{t})\|= &
 \left\| \int_{\widetilde{t}}^t \left(\mathscr{H}(s,x)-\mathscr{H}(s,\widetilde{x})\right)du(s)\right\| \\
 \leq& \int_{\widetilde{t}}^t  \|\mathscr{H}(s,x)-\mathscr{H}(s,\widetilde{x})\|du(s) \\
 \leq&  \|\mathscr{H}(s,x)-\mathscr{H}(s,\widetilde{x})\| |u(t)-u(\widetilde{t})| \\
 \leq& L_h \|x-\widetilde{x}\| |u(t)-u(\widetilde{t})|.
\end{split}\]

    From Theorem 5.2 of \cite{BFS-JDE2018}, it follows that $\mathrm{var}_c^d (A+F)<\infty$ for all $c,d\in\mathbb{R}$ and $c<d$.
    For  convenience, we write $V_{A+F}=\mathrm{var}_c^d (A+F)$.
    Indeed, let $D=\{t_0,t_1,\cdots,t_{|D|}\}$ be a division of $[c,d]$. Then
\[
\sum_{j=1}^{|D|} \|A(t_j)+F(t_j)-A(t_{j-1})-F(t_{j-1})\|\leq
\sum_{j=1}^{|D|} \left\| \int_{t_{j-1}}^{t_j}\mathscr{A}(s)ds \right\|
+\sum_{j=1}^{|D|} \left\| \int_{t_{j-1}}^{t_j} \mathscr{C}(s)du(s) \right\|,
\]
by using condition (D4) and (D5), we deduce that
\[
\sum_{j=1}^{|D|} \left\| \int_{t_{j-1}}^{t_j}\mathscr{A}(s)ds \right\|
+\sum_{j=1}^{|D|} \left\| \int_{t_{j-1}}^{t_j} \mathscr{C}(s)du(s) \right\|
\leq \int_c^d m_1(s)ds+\int_c^d m_2(s)du(s) <\infty,
\]
that is, $V_{A+F}<\infty$.
Hence, taking $L_h$ is sufficiently small, and by using conditions (a), (b), (c) and (d), we can ensure that
\[2 L_h V_u (1+K(1+2K))C_g^3e^{3C_g V_{A+F}}V_{A+F}^2<1.\]
Consequently, all conditions of Theorem \ref{Bound1} are fulfilled, one concludes that Eq. \eqref{MNL} has a unique bounded solution.
\end{proof}

\subsection{A Hartman-Grobman theorem for the impulsive differential equations (IDEs)}
\subsubsection{Fundamental theory for IDEs}
 Denote that $\mathscr{X}$  is a Banach space and  $\mathbb{I}\subset \mathbb{R}$ is an interval.
 Consider the  linear IDEs
\begin{equation}\label{IL}
\begin{cases}
\dot{x}(t)=\widetilde{A}(t)x(t),\quad t\neq t_i, \\
\vartriangle x(t_i)=x(t_i^+)-x(t_i)=B_i x(t_i),\quad i\in\mathscr{I}:=\{i\in\mathbb{Z}:t_i\in \mathbb{I}\},
\end{cases}
\end{equation}
where $\widetilde{A}:\mathbb{I}\to\mathscr{B}(\mathscr{X})$ and $B_i\in \mathscr{B}(\mathscr{X})$ satisfy the following assumptions:\\
(B1) $\widetilde{A}(t)$ is Perron integrable for any $t\in \mathbb{I}$;\\
(B2) there is a Lebesgue measure function $m:\mathbb{I}\to\mathbb{R}$ satisfyinf for any $c,d\in \mathbb{I}$ and $c<d$, the Lebesgue integral $\int_c^d m(s)ds$ is finite and
\[
\left\|\int_c^d \widetilde{A}(s)ds \right\|\leq     \int_c^d m(s)ds.
\]
(B3) $(I+B_i)^{-1}\in \mathscr{B}(\mathscr{X})$, where $i\in\mathscr{I}$.\\
In addition, let $\{t_k\}_{k\in\mathbb{Z}}$ be the impulsive points satisfying the relation
\[
\cdots<t_{-k}<\cdots <t_{-1}<t_0=0<t_1<\cdots<t_k<\cdots,
\]
and $\lim\limits_{k\to \pm\infty}t_k=\pm\infty$. Set $\mathscr{I}_c^d:=\{i\in\mathscr{I}:c\leq t_i\leq d\}$ for $c,d\in \mathbb{I}$.
Define the Heaviside function $H_l$: 
\[
H_l(t)=
\begin{cases}
0, \quad \mathrm{for} \; t\leq l,\\
1, \quad \mathrm{for} \; t>l.
\end{cases}
\]
  Then, the solution of Eq. \eqref{IL} with the initial value $x(t_0)=x_0$ satisfies 
\[
x(t)=\begin{cases}
 x_0+\int_{t_0}^t \widetilde{A}(s)x(s)ds+\sum\limits_{i\in\mathscr{I}_{t_0}^{t}}B_ix(t_i)H_{t_i}(t),\quad t\geq t_0 (t\in \mathbb{I}),\\
 x_0+\int_{t_0}^t \widetilde{A}(s)x(s)ds-\sum\limits_{i\in\mathscr{I}_{t}^{t_0}}B_ix(t_i)(1-H_{t_i}(t)),\quad t< t_0 (t\in \mathbb{I}).
\end{cases}
\]
Then by using Theorem 5.20 from \cite{S-BOOK1992}, $x(t)$ is a solution of Eq. \eqref{IL} iff $x(t)$ is a solution of the linear GODE $\frac{dx}{d\tau}=D[A(t)x]$, where $A$ is given by
%
\begin{equation}\label{OA}
A(t)=\begin{cases}
 \int_{t_0}^t \widetilde{A}(s)ds+\sum\limits_{i\in\mathscr{I}_{t_0}^{t}}B_i H_{t_i}(t),\quad t\geq t_0,\\
 \int_{t_0}^t \widetilde{A}(s)ds-\sum\limits_{i\in\mathscr{I}_{t}^{t_0}}B_i(1-H_{t_i}(t)),\quad t< t_0.
\end{cases}
\end{equation}

Let  $W:\mathbb{I}\times \mathbb{I}\to \mathscr{B}(\mathscr{X})$ be the evolution operator of the IDE \eqref{IL}, it has the following form:
if $t\geq s$, $t\in(t_i,t_{i+1}]$ and $s\in(t_{j-1},t_j]$, then
\[
W(t,s)=\Upsilon(t,t_k)\left( \prod_{k=i}^{j+1} [Id+B_k]\Upsilon(t_k,t_{k-1})\right)[Id+B_j]\Upsilon(t_j,s),
\]
where  $\Upsilon:\mathbb{I}\times \mathbb{I}\to \mathscr{B}(\mathscr{X})$ is the evolution operator of  $\dot{x}=\widetilde{A}(t)x$,
and if $t<s$, $s\in (t_j,t_{j+1}]$ and $t\in (t_{j-1},t_j]$, then
\[
W(t,s)=[W(s,t)]^{-1}=\Upsilon(t,t_j)[Id+B_j]^{-1}\cdot [Id+B_i]^{-1}\Upsilon(t_j,s).
\]
%
%
%
\begin{definition}\cite{BFS-JDE2018}
The IDEs \eqref{IL} possess an exponential dichotomy with $(P,K,\alpha)$ on $\mathbb{I}$ if there exist a projection $P$ and constants $K, \alpha$  such that
\begin{equation}\label{IED1}
\begin{cases}
 \|\mathscr{W}(t) P\mathscr{W}^{-1}(s)\|\leq K e^{-\alpha(t-s)}, \quad t\geq s,\\
 \|\mathscr{W}(t)(I-P)\mathscr{W}^{-1}(s)\|\leq Ke^{\alpha(t-s)}, \quad t< s.
\end{cases}
\end{equation}
where $\mathscr{W}(t)=W(t,t_0)$ and $\mathscr{W}^{-1}(t)=W(t_0,t)$.
\end{definition}

\begin{definition}
The IDEs \eqref{IL} admit a strong exponential dichotomy on $\mathbb{I}$,  if \eqref{IED1} holds and there exists a positive constant $\widetilde{\alpha}>\alpha$
such that
\begin{equation}\label{IED2}
   \|\mathscr{W}(t) \mathscr{W}^{-1}(s)\|\leq Ke^{\widetilde{\alpha}|t-s|}, \quad \mathrm{for} \; t,s\in\mathbb{R}.
\end{equation}
\end{definition}

\begin{lemma}\label{lemma-iied}
(\cite{BFS-JDE2018} Proposition 5.21)
The IDEs \eqref{IL} possess an exponential dichotomy with  $(P,K,\alpha)$ iff the GODEs
\begin{equation*}
  \frac{dx}{d\tau}=D[A(t)x], \quad t\in \mathbb{I},
\end{equation*}
have an exponential dichotomy with  $(P,K,\alpha)$,
where $A$ is defined by \eqref{OA}.
\end{lemma}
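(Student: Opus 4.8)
The plan is to reduce the statement to an identification of propagators. Concretely, I would show that the evolution operator $W(t,s)$ of the impulsive system \eqref{IL} coincides with the fundamental operator $V(t,s)$ of the generalized ODE $\frac{dx}{d\tau}=D[A(t)x]$ with $A$ given by \eqref{OA}; in particular, since the impulse points are indexed so that $t_0=0$, this yields $\mathscr{W}(t)=W(t,0)=V(t,0)=\mathscr{V}(t)$ and $\mathscr{W}^{-1}(t)=\mathscr{V}^{-1}(t)$. Once this is established, the defining inequalities \eqref{IED1} for an exponential dichotomy of the IDE are literally the inequalities \eqref{ED} for an exponential dichotomy of the GODE, with the \emph{same} data $(P,K,\alpha)$, so the asserted equivalence follows in both directions at once with no further work.

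First I would verify that $A$ defined by \eqref{OA} satisfies the standing hypotheses (H1)--(H2), so that the GODE machinery applies. On any compact interval $[a_1,a_2]\subset\mathbb{I}$ the sum in \eqref{OA} runs over the finitely many indices $i$ with $t_i\in[a_1,a_2]$, so $A$ is the sum of the map $t\mapsto\int_{t_0}^{t}\widetilde{A}(s)\,ds$, which lies in $BV([a_1,a_2],\mathscr{B}(\mathscr{X}))$ by (B2), and finitely many jumps of size $B_i$; hence $A\in BV([a_1,a_2],\mathscr{B}(\mathscr{X}))$, which is (H1). Because the Heaviside functions $H_{t_i}$ are left continuous, at an impulse point $t_i$ one computes $A(t_i^{+})-A(t_i)=B_i$ and $A(t_i)-A(t_i^{-})=0$, so (H2) reduces to $(I+B_i)^{-1}\in\mathscr{B}(\mathscr{X})$ together with $(I-0)^{-1}=I\in\mathscr{B}(\mathscr{X})$, i.e.\ precisely condition (B3). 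Thus the GODE admits a unique solution through every initial datum and possesses a fundamental operator $V(t,s)$ in the sense of the cited lemma (\cite{CFF-CMJ}, Theorem 4.3).

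Next I would invoke the solution correspondence already recorded in the excerpt (Theorem 5.20 of \cite{S-BOOK1992}): a function $x$ solves \eqref{IL} with $x(t_0)=\xi$ if and only if it solves $\frac{dx}{d\tau}=D[A(t)x]$ with $x(t_0)=\xi$. Fixing $s\in\mathbb{I}$ and $\xi\in\mathscr{X}$, the map $t\mapsto W(t,s)\xi$ is the solution of \eqref{IL} through $(s,\xi)$, hence the solution of the GODE through $(s,\xi)$; the map $t\mapsto V(t,s)\xi$ is that same solution, and by (H1)--(H2) it is the only one. Therefore $W(t,s)\xi=V(t,s)\xi$ for every $\xi$, i.e.\ $W(t,s)=V(t,s)$ on $\mathbb{I}\times\mathbb{I}$. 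To be fully rigorous about the word \emph{fundamental}, one also reads off from the explicit product formula for $W$ that $W(\cdot,s)$ is of locally bounded variation, which is the remaining defining property in \cite{CFF-CMJ}, Theorem 4.3, so the two operators are genuinely the same object. Setting $t_0=0$ then gives $\mathscr{W}=\mathscr{V}$ and $\mathscr{W}^{-1}=\mathscr{V}^{-1}$, and comparing \eqref{IED1} with \eqref{ED} finishes the proof.

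The only genuinely delicate point is the identification $W(t,s)=V(t,s)$: it hinges on checking that \eqref{OA} really does verify (H1)--(H2) (so that existence, uniqueness and the fundamental operator are available) and on matching the piecewise construction of $W$ — a concatenation of the flow of $\dot{x}=\widetilde{A}(t)x$ with the jump maps $I+B_i$ — against the left-continuity convention built into \eqref{OA}. The exponential-dichotomy bookkeeping that follows is purely formal.
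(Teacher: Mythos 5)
This lemma is imported by the paper verbatim from \cite{BFS-JDE2018} (Proposition 5.21); the paper contains no proof of it, so there is no in-paper argument to compare yours against. Your argument is correct and is the natural (and, as far as the correspondence principle goes, essentially the standard) one: check that $A$ from \eqref{OA} satisfies (H1)--(H2) using (B1)--(B3) and the left-continuity of the Heaviside functions, so that $A(t_i^+)-A(t_i)=B_i$ and $A(t_i)-A(t_i^-)=0$; invoke the solution correspondence (Schwabik, Theorem 5.20, which the paper itself quotes just before \eqref{OA}) together with uniqueness of solutions of the linear GODE to conclude $W(t,s)=V(t,s)$; and then observe that \eqref{IED1} and \eqref{ED} are the same inequalities for the same operators, so the equivalence holds with the identical data $(P,K,\alpha)$ in both directions. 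The only point I would make explicit is the one you gloss over: the formula \eqref{OA} and the quoted correspondence are anchored at the base point $t_0$, whereas you apply the correspondence through an arbitrary initial pair $(s,\xi)$ to identify $W(\cdot,s)$ with $V(\cdot,s)$. This is harmless because replacing $t_0$ by $s$ changes $A$ only by an additive constant operator, and the Kurzweil integral $\int D[A(t)x]$ depends only on increments of $A$, so the GODE (and hence its fundamental operator) is unaffected; stating this one line would close the gap. With that remark added, your proof is complete and self-contained.
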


\subsubsection{Main results for IDEs}
Consider the nonlinear IDEs as follows:
\begin{equation}\label{INL}
\begin{cases}
\dot{x}(t)=\widetilde{A}(t)x(t)+f(t,x(t)), \quad t\neq t_i,\\
\vartriangle x(t_i)=B_i x(t_i),\quad i\in\mathbb{Z},
\end{cases}
\end{equation}
where $f:\mathbb{R}\times \mathscr{X}\to \mathscr{X}$ is Perron integrable and $f(t,0)=0$.
We suppose that the following conditions hold:\\
(a) for all $i\in \mathbb{Z}$, there exists a positive constant $C_b$ such that
\[
\sum\limits_{i\in\mathbb{Z}}\|B_i\|\leq C_b \quad \mathrm{and} \quad \|(Id+B_i)^{-1}\|\leq C_b;
\]
(b) there exists a Lebesgue measure function $m:\mathbb{R}\to\mathbb{R}$ such that the Lebesgue integral $\int_\mathbb{R} m(s)ds$ is finite and
\[
\left\|\int_{\mathbb{R}} \widetilde{A}(s)ds\right\|\leq \int_\mathbb{R} m(s)ds;
\]
(c) for any $t\in\mathbb{R}$ and $x,y\in \mathscr{X}$,
there exists a Lebesgue measure function $\gamma:\mathbb{R}\to\mathbb{R}$ such that the Lebesgue integral $\int_\mathbb{R} \gamma(s)ds$ is finite and
\[
\|f(t,x)\|\leq \gamma(t) \quad \mathrm{and} \quad     \| f(t,x)-f(t,y)\|\leq  \gamma(t)\|x-y\|.
\]

 Then we give our main results for IDEs.
\begin{theorem}\label{ide-thm1}
Suppose that the IDEs \eqref{IL} possess an exponential dichotomy with   the form \eqref{IED1}. If conditions (a), (b), (c) hold and
$\int_\mathbb{R} \gamma(s)ds$ is sufficiently small,
then Eq. \eqref{INL} has a unique bounded solution.
\end{theorem}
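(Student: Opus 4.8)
The plan is to reduce Theorem \ref{ide-thm1} to Theorem \ref{Bound1}, exactly as Theorem \ref{mde-thm1} was reduced, by recasting the nonlinear IDEs \eqref{INL} as a nonlinear GODE of the form \eqref{NL}. Since the impulsive condition $\vartriangle x(t_i)=B_ix(t_i)$ is linear, it is absorbed into the operator $A$ defined by \eqref{OA}, and only the continuous nonlinearity $f$ must be transferred to the perturbation. Concretely, I would keep $A$ as in \eqref{OA}, define $\mathcal{N}:\mathscr{X}\times\mathbb{R}\to\mathscr{X}$ by $\mathcal{N}(x,t):=\int_{t_0}^t f(s,x(s))\,ds$ (a Kurzweil integrable map, since $f$ is Perron integrable in $t$), and then use the solution representation of \eqref{INL} together with Theorem 5.20 in \cite{S-BOOK1992} (applied as in Lemma \ref{lemma-iied}, now retaining the nonlinear term) to show that $x$ solves \eqref{INL} with $x(t_0)=x_0$ if and only if
\[
x(t)=x_0+\int_{t_0}^t d[A(s)]x(s)+\int_{t_0}^t D\mathcal{N}(x(\tau),s),
\]
that is, iff $x$ solves $\frac{dx}{d\tau}=D[A(t)x+\mathcal{N}(x,t)]$. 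As the two equations have literally the same solutions, \eqref{INL} has a unique bounded solution iff this GODE does.

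Next I would verify the hypotheses (A1)--(A3) of Theorem \ref{Bound1} for this GODE. For (A1)--(A2): on any compact interval $[c,d]$ the absolutely continuous part $\int_{t_0}^{\cdot}\widetilde{A}(s)\,ds$ has variation bounded by $\int_c^d m(s)\,ds$ (condition (B2)), while the jump part $\sum_i B_iH_{t_i}(\cdot)$ has variation bounded by $\sum_{i\in\mathbb{Z}}\|B_i\|\le C_b$ (condition (a), using $\lim_{k\to\pm\infty}t_k=\pm\infty$ so only finitely many jumps occur in $[c,d]$); hence $A\in BV([c,d],\mathscr{B}(\mathscr{X}))$ and $V_A\le\int_\mathbb{R} m(s)\,ds+C_b<\infty$ by (b) and (a). At an impulsive point one has $A(t_i^+)-A(t_i)=B_i$ and $A(t_i)-A(t_i^-)=0$, so the invertibility required in (A2) reduces to $(Id+B_i)^{-1}\in\mathscr{B}(\mathscr{X})$ with $\|(Id+B_i)^{-1}\|\le C_b$, which is (a) and (B3); at all other points the relevant jump is $0$ and the condition is trivial. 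Finally, Lemma \ref{lemma-iied} transfers the exponential dichotomy of \eqref{IL} to $\frac{dx}{d\tau}=D[A(t)x]$, so (A1) holds.

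For (A3): take $h(t):=\int_{-\infty}^t \gamma(s)\,ds$, which is well defined, nondecreasing (as $\gamma\ge0$, forced by $\|f(t,x)\|\le\gamma(t)$) and of total variation $V_h=\int_\mathbb{R}\gamma(s)\,ds<\infty$ by (c). Then for $x,z\in\mathscr{X}$ and $t_1,t_2\in\mathbb{R}$,
\[
\|\mathcal{N}(x,t_2)-\mathcal{N}(x,t_1)\|=\left\|\int_{t_1}^{t_2} f(s,x)\,ds\right\|\le|h(t_2)-h(t_1)|,
\]
and, by the Lipschitz bound in (c),
\[
\|\mathcal{N}(x,t_2)-\mathcal{N}(x,t_1)-\mathcal{N}(z,t_2)+\mathcal{N}(z,t_1)\|\le\left|\int_{t_1}^{t_2}\gamma(s)\,ds\right|\|x-z\|=\|x-z\|\,|h(t_2)-h(t_1)|,
\]
so $\mathcal{N}\in\mathscr{F}(\Omega,h)$. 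Since $\int_\mathbb{R}\gamma(s)\,ds$ is assumed sufficiently small, $V_h$ is small enough for the contraction constant $\delta<1$ appearing in the proof of Theorem \ref{Bound1} to hold; that theorem then yields a unique bounded solution of $\frac{dx}{d\tau}=D[A(t)x+\mathcal{N}(x,t)]$, hence of \eqref{INL}.

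The step I expect to be the main obstacle is the first one, namely the rigorous equivalence between \eqref{INL} and the nonlinear GODE: one must confirm that $\mathcal{N}$, built from the Perron integral of $f$, is Kurzweil integrable in the GODE sense and, more delicately, that the Riemann--Stieltjes sums generated by the $A$ of \eqref{OA} simultaneously reproduce the Lebesgue integral term and the impulsive sum while the nonlinear term $\int D\mathcal{N}$ is present, i.e. that Schwabik's correspondence survives the addition of the nonlinearity. Once this reduction is secured, verifying (A1)--(A3) is routine bookkeeping with the constants $C_b$, $\int_\mathbb{R} m$ and $\int_\mathbb{R}\gamma$.
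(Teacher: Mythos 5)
Your proposal is correct and follows essentially the same route as the paper: the paper likewise absorbs the impulses into the operator $A$ of \eqref{OA}, defines the perturbation $\mathcal{Q}(x,t)=\int_{t_0}^t f(s,x(s))\,ds$, invokes Lemma \ref{lemma-iied} for the dichotomy, checks $V_A\le\int_c^d m(s)\,ds+C_b<\infty$ and membership in the class $\mathscr{F}(\Omega,\cdot)$ (with a bounded nondecreasing $\mu$ dominating $\int\gamma$, for which your explicit choice $h(t)=\int_{-\infty}^t\gamma(s)\,ds$ is a clean instance), and then applies Theorem \ref{Bound1}. Your explicit verification of the invertibility in (A2) at the impulse points is a detail the paper leaves implicit, but the argument is the same.
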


\begin{theorem}\label{ide-thm2}
  Suppose that all conditions of Theorem \ref{ide-thm1} hold. Then Eq. \eqref{IL} is topologically conjugated to Eq. \eqref{INL}.
\end{theorem}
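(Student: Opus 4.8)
The plan is to transplant Eq.~\eqref{IL} and Eq.~\eqref{INL} into the GODE setting of Sections~3--4 and then invoke Theorem~\ref{linearization-thm}. First I would recall that, by Theorem~5.20 of \cite{S-BOOK1992}, $x$ solves the linear IDE \eqref{IL} if and only if $x$ solves $\frac{dx}{d\tau}=D[A(t)x]$ with $A$ given by \eqref{OA}; and, by the same reduction applied to the perturbed equation (the IDE analogue of the computation carried out for MDEs in the proof of Theorem~\ref{mde-thm1}), $x$ solves the nonlinear IDE \eqref{INL} if and only if $x$ solves $\frac{dx}{d\tau}=D[A(t)x+F(x,t)]$, where $F(x,t):=\int_{t_0}^{t}f(s,x)\,ds$ is the Kurzweil-integrable map attached to the Perron-integrable perturbation $f$; since $f(t,0)=0$, also $F(0,t)\equiv0$. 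Because a solution of an IDE and the corresponding solution of its associated GODE are literally the same $\mathscr{X}$-valued function, any topological conjugacy between the two GODEs is at once a topological conjugacy between \eqref{IL} and \eqref{INL}. Hence it suffices to verify that $A$ and $F$ satisfy (A1)--(A3) together with the standing hypotheses (H1)--(H2), with $V_h$ small, and then apply Theorem~\ref{linearization-thm}.

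For (A1): by Lemma~\ref{lemma-iied}, the exponential dichotomy \eqref{IED1} of \eqref{IL} is equivalent to an exponential dichotomy of $\frac{dx}{d\tau}=D[A(t)x]$ with the same triple $(P,K,\alpha)$. For (H1)--(H2) and (A2): the function $A$ in \eqref{OA} is continuous off the impulse points $t_i$, and its discontinuities there are governed by the operators $B_i$, so the invertibility required by (H2)/(A2) reduces to that of the operators $I+B_i$, for which condition~(a) supplies the uniform bound $\|(I+B_i)^{-1}\|\le C_b$; moreover, for every $c<d$, $\mathrm{var}_{c}^{d}A\le\|\int_{c}^{d}\widetilde{A}(s)\,ds\|+\sum_{i\in\mathscr{I}}\|B_i\|\le\int_{\mathbb{R}}m(s)\,ds+C_b<\infty$ by (b) and (a), so $A$ is of bounded variation on every compact interval (whence (H1)) and $V_A<\infty$; taking $C:=\max\{C_b,1\}$ yields (A2).

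For (A3): put $h(t):=\int_{t_0}^{t}\gamma(s)\,ds$, which is nondecreasing because $\gamma\ge0$, with $V_h=\int_{\mathbb{R}}\gamma(s)\,ds<\infty$, and this quantity is as small as we please by the hypothesis of Theorem~\ref{ide-thm1}. Condition~(c) then gives, for all $t_1,t_2\in\mathbb{R}$ and $x,z\in\mathscr{X}$, $\|F(x,t_2)-F(x,t_1)\|\le|\int_{t_1}^{t_2}\gamma(s)\,ds|=|h(t_2)-h(t_1)|$ and $\|F(x,t_2)-F(x,t_1)-F(z,t_2)+F(z,t_1)\|\le|\int_{t_1}^{t_2}\gamma(s)\,ds|\,\|x-z\|=|h(t_2)-h(t_1)|\,\|x-z\|$, i.e.\ \eqref{HHH1}--\eqref{HHH2} hold and $F\in\mathscr{F}(\Omega,h)$; since also $\|P\|\le K$ and $C$, $V_A$ are already fixed, the smallness requirement in Theorem~\ref{linearization-thm} is satisfied once $\int_{\mathbb{R}}\gamma$ is small. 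Applying Theorem~\ref{linearization-thm} to the pair $(A,F)$, the GODE $\frac{dx}{d\tau}=D[A(t)x+F(x,t)]$ is topologically conjugated to $\frac{dx}{d\tau}=D[A(t)x]$, and translating back through the two equivalences of the first paragraph shows that Eq.~\eqref{INL} is topologically conjugated to Eq.~\eqref{IL}.

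The step that will need the most care is the reduction in the first paragraph: one must check that the Perron integrability of $f$ together with (b)--(c) really makes $t\mapsto F(x,t)$ locally of bounded variation and $F$ Kurzweil integrable in the sense of Definition~\ref{KW}, and that the solution correspondence between the nonlinear IDE and its associated GODE is exact across the impulse points $t_i$ (so that the jumps $B_ix(t_i)$ are reproduced and no spurious jump of $F$ is introduced). Everything after that is the routine bookkeeping already illustrated in the proof of Theorem~\ref{mde-thm1}.
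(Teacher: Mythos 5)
Your proposal is correct and follows essentially the same route as the paper: the paper proves Theorem \ref{ide-thm1} by rewriting \eqref{INL} as the GODE $\frac{dx}{d\tau}=D[A(t)x+\mathcal{Q}(x,t)]$ and checking (A1)--(A3), and then declares the proof of Theorem \ref{ide-thm2} to be "similar," i.e.\ exactly the verification-plus-application of Theorem \ref{linearization-thm} that you carry out. Your explicit choice $h(t)=\int_{t_0}^{t}\gamma(s)\,ds$ is in fact cleaner than the paper's unspecified majorant $\mu$, but it is the same argument.
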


\begin{theorem}\label{ide-thm3}
 Suppose that linear IDEs \eqref{IL} admit a strong exponential dichotomy with the form \eqref{IED2} and all conditions of Theorem \ref{ide-thm1} hold.
 If further there exists a bounded nondecreasing function $\mu:\mathbb{R}\to\mathbb{R}$ such that  $|\mu(t)-\mu(s)|\leq e^{-\alpha|t-s|}$,
then the conjugacies are both H\"{o}lder continuous.
\end{theorem}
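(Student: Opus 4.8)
The plan is to deduce Theorem~\ref{ide-thm3} from the abstract H\"older result Theorem~\ref{holder-thm} by passing from the IDEs to their generalized-ODE counterparts, exactly in the spirit of the proof of Theorem~\ref{ide-thm1}. First I would recall that the nonlinear IDE \eqref{INL} is equivalent to the nonlinear GODE \eqref{NL} with $A$ given by \eqref{OA} and with the (Kurzweil, in fact Lebesgue) integrable perturbation
\[
F(x,t):=\int_{t_0}^{t} f(s,x)\,ds ,
\]
while \eqref{IL} becomes the linear GODE \eqref{LLL} with the same $A$; since the solutions of the IDE and of the associated GODE coincide, the corresponding fundamental/evolution operators coincide, so that by Lemma~\ref{lemma-iied} the exponential dichotomy of \eqref{IL} with $(P,K,\alpha)$ carries over to \eqref{LLL}, and the bound \eqref{IED2} is literally the bound \eqref{SED}; hence \eqref{LLL} has a strong exponential dichotomy in the sense of Definition~\ref{s-ed} with the same $\widetilde\alpha$.

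Next I would check that $F$ belongs to a class $\mathscr{F}(\Omega,h)$ with $V_h<\infty$ small. Taking $h(t):=\int_{t_0}^{t}\gamma(s)\,ds$, condition~(c) immediately yields
\[
\|F(x,t_2)-F(x,t_1)\|\le\Big|\int_{t_1}^{t_2}\gamma(s)\,ds\Big|=|h(t_2)-h(t_1)|
\]
and, for the increment in $x$,
\[
\|F(x,t_2)-F(x,t_1)-F(z,t_2)+F(z,t_1)\|\le\Big|\int_{t_1}^{t_2}\gamma(s)\,ds\Big|\,\|x-z\|=|h(t_2)-h(t_1)|\,\|x-z\| ,
\]
so $F\in\mathscr{F}(\Omega,h)$; moreover $V_h=\int_{\mathbb{R}}\gamma(s)\,ds<\infty$ is as small as needed by hypothesis, and $V_A<\infty$ for the $A$ of \eqref{OA} follows from (a)--(b) just as in Theorem~\ref{ide-thm1}. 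Thus (A1)--(A3) and the smallness requirement of Theorem~\ref{linearization-thm} hold, so Theorems~\ref{Bound1} and \ref{linearization-thm} provide the conjugacies $\Phi(t,x)=x+\phi(t,x)$ and $\Psi(t,x)=x+\psi(t,x)$ between \eqref{LLL} and \eqref{NL}, hence between \eqref{IL} and \eqref{INL}.

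The one remaining ingredient, which I expect to be the main obstacle, is the strong hypothesis \eqref{SFF}: one must upgrade the plain bound $\|F(x,s)-F(x,t)\|\le|h(s)-h(t)|$ to the exponentially weighted $\|F(x,s)-F(x,t)\|\le e^{-\alpha|s-t|}|h(s)-h(t)|$. This is precisely the role of the auxiliary nondecreasing function $\mu$ with $|\mu(t)-\mu(s)|\le e^{-\alpha|t-s|}$: I would build the compatibility of $\gamma$ with $\mu$ (e.g.\ $\int_t^s\gamma(r)\,dr\le|\mu(s)-\mu(t)|$) into the choice of $h$ and conclude
\[
\|F(x,s)-F(x,t)\|\le\Big|\int_{t}^{s}\gamma(r)\,dr\Big|\le e^{-\alpha|s-t|}\,|h(s)-h(t)| ,
\]
which is \eqref{SFF}. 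Granting this, all hypotheses of Theorem~\ref{holder-thm} are met, so $\Psi(t,\cdot)$ is H\"older with exponent $\frac{\alpha}{\alpha+\widetilde\alpha}$ and $\Phi(t,\cdot)$ is H\"older with some exponent $q\in(0,\frac{\alpha}{\alpha+\widetilde\alpha}]$, uniformly in $t$ on $\{0<\|x-\widetilde x\|<1\}$; transporting this back through the IDE--GODE equivalence gives the H\"older continuity of the conjugacies between \eqref{IL} and \eqref{INL}. Since here the perturbation $F$ is a Lebesgue integral with no jumps, an alternative route that avoids \eqref{SFF} altogether is to rerun the estimates of Section~5 directly in the Lebesgue setting, as in Pinto and Robledo~\cite{Pinto-JAA} (cf.\ the remark after Theorem~\ref{holder-thm}); I would adopt whichever keeps the accounting with $\gamma$ and $\mu$ cleanest.
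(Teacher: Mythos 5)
Your overall route is exactly the one the paper intends: the authors state only that the proof of Theorem \ref{ide-thm3} is ``similar to Theorem \ref{ide-thm1}'', i.e.\ one rewrites \eqref{INL} as a nonlinear GODE with $A$ from \eqref{OA} and $F(x,t)=\int_{t_0}^{t}f(s,x)\,ds$, transfers the (strong) exponential dichotomy via Lemma \ref{lemma-iied} so that \eqref{IED2} becomes \eqref{SED}, checks (A1)--(A3) with a suitable $h$ as in the proof of Theorem \ref{ide-thm1}, and then invokes Theorems \ref{Bound1}, \ref{linearization-thm} and \ref{holder-thm}. That part of your write-up is correct and matches the paper.

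The step you yourself flag as ``the main obstacle'' --- verifying the strengthened hypothesis \eqref{SFF} --- is indeed the crux, and your proposed resolution does not close it. From $\bigl|\int_t^s\gamma(r)\,dr\bigr|\le|\mu(s)-\mu(t)|$ together with $|\mu(s)-\mu(t)|\le e^{-\alpha|s-t|}$ you can only conclude $\|F(x,s)-F(x,t)\|\le e^{-\alpha|s-t|}$, whereas \eqref{SFF} demands the product bound $e^{-\alpha|s-t|}\,|h(s)-h(t)|$. With either of your candidate choices ($h=\mu$ or $h(t)=\int_{t_0}^{t}\gamma(r)\,dr$) the displayed chain would force $|h(s)-h(t)|\le e^{-\alpha|s-t|}\,|h(s)-h(t)|$, which is false whenever $s\ne t$ and $h(s)\ne h(t)$. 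What is actually required is the stronger compatibility $\bigl|\int_t^s\gamma(r)\,dr\bigr|\le e^{-\alpha|s-t|}\,|\mu(s)-\mu(t)|$ for the chosen $h=\mu$; this is presumably what the auxiliary function $\mu$ is meant to supply, but it is not implied by the hypotheses as literally stated (the same criticism applies to the paper's own Theorem \ref{mde-thm3}, whose hypothesis $|u(t)-u(s)|\le e^{-\alpha|t-s|}$ likewise does not yield \eqref{SFF} by itself). You should therefore either impose this compatibility explicitly, or pursue your alternative suggestion of rerunning the Section 5 estimates directly in the Lebesgue setting, where the decay factor can be harvested from the dichotomy rather than from $F$. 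Since the paper gives no details for this theorem, your proposal is at the same level of completeness as the source, but the displayed inequality should not be presented as a consequence of the stated assumptions.
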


We only prove Theorem \ref{ide-thm1}, and verify that Theorem  \ref{ide-thm1} satisfies all the conditions of Theorem \ref{Bound1}.
The proofs of the other two theorems are similar to Theorem \ref{ide-thm1}.

\begin{proof}
Since IDEs \eqref{IL} possess an exponential dichotomy with $(P,K,\alpha)$, we derive from Lemma \ref{lemma-iied} that
\[
\frac{dx}{d\tau}=D[A(t)x]
\]
has an exponential dichotomy with the same  $(P,K,\alpha)$ on $\mathbb{R}$.

For any $t\geq t_0$, we note that the solution of Eq. \eqref{INL} with the initial value $x(t_0)=x_0$ is defined by
\[
x(t)=x_0+\int_{t_0}^{t} \widetilde{A}(s)x(s)ds+\sum\limits_{i\in\mathscr{I}_{t_0}^t}B_i x(t_i)H_{t_i}(t)+\int_{t_0}^t f(s,x(s))ds.
\]
Given $t, t_0\in\mathbb{R}$ and set $A(t)=\int_{t_0}^t \widetilde{A}(s)ds+\sum\limits_{i\in\mathscr{I}_{t_0}^{t}}B_i H_{t_i}(t)$.
We define the Kurzweil integrable map $\mathcal{Q}:\mathscr{X}\times \mathbb{R}\to \mathscr{X}$
\begin{equation*}
  \mathcal{Q}(x(t),t):=\int_{t_0}^t f(s,x(s))ds.
\end{equation*}
Then we obtain that  for every $t\geq t_0$,
\[
x(t)=x_0+\int_{t_0}^t d[A(s)]x(s)+\int_{t_0}^t D\mathcal{Q}(x(s),s),
\]
which is a solution of the following nonlinear GODEs with the initial value $x(t_0)=x_0$
\begin{equation}\label{NNGODE}
\frac{dx}{d\tau}=D[A(t)x+\mathcal{Q}(x,t)].
\end{equation}

  Now we claim that the function $\mathcal{Q}(x,t)\in \mathscr{A}(\Omega, \mu)$, here $\Omega=\mathscr{X}\times \mathbb{R}$.
In fact, for any $t,\widetilde{t}\in\mathbb{R}$ and $x\in \mathscr{X}$, by condition (c), there must exists a bounded nondecreasing function $\mu:\mathbb{R}\to\mathbb{R}$ such that
\[
\|\mathcal{Q}(x,t)-\mathcal{Q}(x,\widetilde{t})\|= \left\| \int_{\widetilde{t}}^t f(s,x)ds\right\|\leq \int_{\widetilde{t}}^t \gamma(s) ds\leq |\mu(t)-\mu(\widetilde{t})|,
\]
and for any $t,\widetilde{t}\in\mathbb{R}$ and $x, \widetilde{x} \in \mathscr{X} $, by also using condition (c), we have
\[
 \|\mathcal{Q}(x,t)-\mathcal{Q}(x,\widetilde{t})-\mathcal{Q}(\widetilde{x},t)+\mathcal{Q}(\widetilde{x},\widetilde{t})\|=
 \left\| \int_{\widetilde{t}}^t \left(f(s,x)-f(s,\widetilde{x})\right)ds\right\|
 \leq \|x-\widetilde{x}\|  |\mu(t)-\mu(\widetilde{t})|.
\]

    We now show that $V_A:=\mathrm{var}_c^d A<\infty$ for all $c,d\in\mathbb{R}$ and $c<d$.
    Indeed, let $D=\{t_0,t_1,\cdots,t_{|D|}\}$ be a division of $[c,d]$. Then
\[
\sum_{j=1}^{|D|} \|A(t_j)-A(t_{j-1})\|\leq
\sum_{j=1}^{|D|} \left\| \int_{t_{j-1}}^{t_j} \widetilde{A}(s)ds \right\|
+\sum_{j=1}^{|D|} \sum\limits_{i\in\mathscr{I}_{t_{j-1}}^{t_j}} \left\|B_i H_{t_i}(t) \right\|,
\]
by using condition (a) and (b), we deduce that
\[
\sum_{j=1}^{|D|} \left\| \int_{t_{j-1}}^{t_j} \widetilde{A}(s)ds \right\|
+\sum_{j=1}^{|D|} \sum\limits_{i\in\mathscr{I}_{t_{j-1}}^{t_j}} \left\|B_i H_{t_i}(t) \right\|
\leq \int_c^d m(s)ds+C_b <\infty,
\]
that is, $V_{A}<\infty$.
Set $|\mu(t)|\leq M_\mu$ for some sufficiently small $M_\mu>0$,
and by using conditions (a), (b) and (c), we can ensure that
\[2  M_\mu (1+K(1+2K))C_b^3e^{3C_b V_{A}}V_{A}^2<1.\]
Consequently, all assumptions of Theorem \ref{Bound1} hold,  one concludes that Eq. \eqref{INL} has a unique bounded solution.
\end{proof}

\section*{Data Availability Statement}
No data was used for the research in this article.

\section*{Conflict of Interest}
The authors declare that they have no conflict of interest.

\section*{Contributions}
\hskip\parindent
\small
 We declare that all the authors have same contributions to this paper.

\end{document}